\documentclass{article}

\usepackage{hyperref}
\usepackage{amsmath}
\usepackage{amssymb}
\usepackage{amsthm}
\usepackage{tikz}
\usetikzlibrary{arrows.meta}

\newtheorem{lemma}{Lemma}
\newtheorem{theorem}{Theorem}
\newtheorem*{theorem*}{Theorem}

\newcommand{\dotoplus}{ \mathbin{ \dot{\oplus} } }
\newcommand{\inv}[1]{
	\!\;
	\overline{
		\!\!\: #1 \vphantom{!} \!\!\:
	}
	\;\!
}
\newcommand{\leqt}{ \trianglelefteq }
\newcommand{\up}[2]{ { ^{#1} \! {#2} } }

\DeclareMathOperator{\Image}{Im}
\DeclareMathOperator{\Aut}{Aut}
\DeclareMathOperator{\Hom}{Hom}

\DeclareMathOperator{\Pro}{Pro}
\DeclareMathOperator{\Ind}{Ind}
\newcommand{\Ring}{ \mathbf{Ring} }
\newcommand{\Set}{ \mathbf{Set} }
\newcommand{\Cat}{ \mathbf{Cat} }

\DeclareMathOperator{\Mat}{M}
\DeclareMathOperator{\Ker}{Ker}
\DeclareMathOperator{\Center}{Z}

\newcommand{\sMat}[4]{
	\bigl(
		\begin{smallmatrix}
				#1 \mathstrut
			&
				#2 \mathstrut
			\\
				#3 \mathstrut
			&
				#4 \mathstrut
		\end{smallmatrix}
	\bigr)
}

\DeclareMathOperator{\Spec}{Spec}
\DeclareMathOperator{\Max}{Max}
\DeclareMathOperator{\Jac}{J}
\DeclareMathOperator{\Closure}{Cl}
\DeclareMathOperator{\jdim}{jdim}
\newcommand{\ClosedSet}{ \mathop{ \mathcal{V} } }

\newcommand{\Sym}{ \mathrm{S} }
\newcommand{\Field}{ \mathbb{F} }
\newcommand{\Real}{ \mathbb{R} }
\newcommand{\Int}{ \mathbb{Z} }

\newcommand{\fppf}{ \mathrm{fppf} }
\newcommand{\Affine}{ \mathbb{A} }
\newcommand{\ModSheaf}{ \mathop{ \mathbb{W} } }
\DeclareMathOperator{\Restrict}{R}

\newcommand{\Presheaf}{ \mathbf{P} }
\newcommand{\R}{ \mathcal{R} }

\newcommand{\e}{ \mathrm{e} }
\newcommand{ \RootSys }[2]{ \mathsf{#1}_{#2} }
\newcommand{ \TitsIndex }[5]{
	\up{#2}{ \mathsf{#1}_{#3, #4}^{#5} }
}

\DeclareMathOperator{\Heis}{Heis}
\DeclareMathOperator{\Hyp}{H}
\newcommand{\OddFormPar}{ \mathcal{L} }
\newcommand{\hdisc}{ \mathop{ \mathrm{hdisc} } }

\DeclareMathOperator{\GenLin}{GL}
\DeclareMathOperator{\SpecLin}{SL}
\DeclareMathOperator{\Elem}{E}
\DeclareMathOperator{\KFunc}{K}
\DeclareMathOperator{\CongLin}{C}

\DeclareMathOperator{\Orth}{O}
\DeclareMathOperator{\SpecOrth}{SO}
\DeclareMathOperator{\Refl}{Refl}
\newcommand{ \SpecOrthSch }{ \mathbb{SO} }
\newcommand{ \ProjSpecOrthSch }{ \mathbb{PSO} }

\DeclareMathOperator{\Unit}{U}
\DeclareMathOperator{\SpecUnit}{SU}
\DeclareMathOperator{\ElemUnit}{EU}
\DeclareMathOperator{\KUnit}{KU}
\newcommand{\UnitSch}{ \mathbb{U} }

\newcommand{\liealg}{ \mathfrak{g} }
\newcommand{\simpcon}{ \mathrm{sc} }
\newcommand{\IsoPinning}{ \mathcal{U} }
\DeclareMathOperator{\ChevGrp}{G}
\DeclareMathOperator{\IsoPin}{IsoPin}

\title{
	Nilpotency of locally isotropic
	\( \KFunc_1 \)-functor
}

\author{
	Egor Voronetsky%
	\thanks{
		This research is supported by the Russian Science Foundation grant 26-11-00085.
	} \\
	Saint Petersburg University, \\
	7/9 Universitetskaya nab., \\
	St. Petersburg, 199034 Russia
}

\begin{document}
\maketitle

\begin{abstract}
	We show that the
	\( \KFunc_1 \)-functor modeled on locally isotropic reductive groups is hypoabelian if the base ring has finite Bass--Serre dimension and, for the Tits index
	\( \TitsIndex{E}{}{8}{2}{78} \), contains a field. For classical or globally isotropic reductive groups schemes
	\( \KFunc_1 \) is actually solvable. This implies that the elementary subgroup (or its derived subgroup) is the maximal perfect subgroup of the reductive group.
\end{abstract}

\section{Introduction}

Let
\( R \) be an associative ring and
\( \GenLin(n, R) \) the general linear group over
\( R \) for
\( n \geq 3 \). It is well known that if
\( R = D \) is a division ring, then the derived subgroup
\( \bigl[ \GenLin(n, D), \GenLin(n, D) \bigr] \) is simple modulo its center. This implication also holds for
\( n = 2 \) with only two exceptions
\( D \cong \Field_2 \) and
\( D \cong \Field_3 \), because in the exceptional cases the group
\( \GenLin(n, D) \) is already solvable.

For arbitrary rings the situation is different. First of all, let us introduce the
\textit{elementary subgroup}
\[
	\Elem(n, R)
	=
	\bigl\langle
		1 + x e_{i j}
		\mid
		i \neq j,\ %
		x \in R
	\bigr\rangle
	\leq
	\GenLin(n, R).
\]
This subgroup is always perfect, i.e.\ %
\(
	\Elem(n, R)
	=
	\bigl[ \Elem(n, R), \Elem(n, R) \bigr]
\). The center of
\( \GenLin(n, R) \) coincides with the centralizer of
\( \Elem(n, R) \) and consists of scalar matrices, so it is isomorphic to the group of invertible elements of the center of
\( R \).

The elementary subgroup is far from simple, but its normal structure is easy to describe up to certain homological invariants of
\( R \). Namely, if
\( R \) is
\textit{almost commutative} (i.e.\ every finite subset of
\( R \) generates a module finite algebra over the center of
\( R \)), then a subgroup
\( H \leq \GenLin(n, R) \) is normalized by
\( \Elem(n, R) \) if and only if there exists a (necessarily unique) ideal
\( I \leqt R \) such that
\[
	\Elem(n, R, I) \leq H \leq \CongLin(n, R, I).
\]
Here the
\textit{relative elementary subgroup}
\( \Elem(n, R, I) \) is the normal subgroup of
\( \Elem(n, R) \) generated by
\( 1 + x e_{i j} \) for
\( i \neq j \) and
\( x \in I \). The
\textit{full congruence subgroup}
\( \CongLin(n, R, I) \) is the preimage of the center of
\( \GenLin(n, R / I) \) under the canonical homomorphism
\( \GenLin(n, R) \to \GenLin(n, R / I) \). Moreover,
\( \Elem(n, R, I) \) (including
\( \Elem(n, R) \)) is normal in
\( \GenLin(n, R) \) and the following
\textit{standard commutator formulae} hold.
\[
	\bigl[ \GenLin(n, R), \Elem(n, R, I) \bigr]
	=
	\bigl[ \CongLin(n, R, I), \Elem(n, R) \bigr]
	=
	\Elem(n, R, I).
\]
Thus the description of subgroups normalized by the elementary subgroup reduces to the classification of subgroups of the factor-groups
\( \CongLin(n, R, I) / \Elem(n, R, I) \). Let
\( \GenLin(n, R, I) \) be the kernel of the canonical homomorphism
\( \GenLin(n, R) \to \GenLin(n, R / I) \), i.e.\ the group of invertible matrices congruent to the identity matrix modulo
\( I \). The factor-group
\( \CongLin(n, R, I) / \GenLin(n, R, I) \) clearly embeds into the center of
\( \GenLin(n, R / I) \), so it is abelian.

Finally, Anthony Bak proved in
\cite{k1-nil-lin} that the
\textit{%
	relative
	\( \KFunc_1 \)-functor%
}
\[
	\KFunc_1(n, R, I) = \GenLin(n, R, I) / \Elem(n, R, I)
\] is solvable provided that the
\textit{Bass--Serre dimension} of the center of
\( R \) is finite. More precisely, without the dimension assumption there is a natural
\textit{dimension filtration}
\[
	\GenLin(n, R, I)
	=
	\GenLin^{- \infty}(n, R, I)
	\geq
	\GenLin^0(n, R, I)
	\geq
	\GenLin^1(n, R, I)
	\geq
	\GenLin^2(n, R, I)
	\geq
	\ldots
	\geq
	\Elem(n, R, I)
\]
such that
\begin{align*}
		\GenLin^i(n, R, I) &\leqt \GenLin(n, R),
	\\
		\bigl[
			\GenLin(n, R, I),
			\GenLin(n, R, I)
		\bigr]
		&\leq
		\GenLin^0(n, R, I),
	\\
		\bigl[
			\GenLin^0(n, R, I),
			\GenLin^i(n, R, I)
		\bigr]
		&\leq
		\GenLin^{i + 1}(n, R, I)
		\text{ for }
		i \geq 0.
\end{align*}
If the Bass--Serre dimension
\( \delta \) of the center of
\( R \) is finite (or
\( - \infty \) in the case
\( R = 0 \)), then
\( \GenLin^\delta(n, R, I) = \Elem(n, R, I) \) and the filtration stabilizes. In this case
\( \KFunc_1(n, R, I) \) is an extension of the nilpotent group
\( \GenLin^0(n, R, I) / \Elem(n, R, I) \) by the abelian group
\( \GenLin(n, R, I) / \GenLin^0(n, R, I) \), so this group is solvable. In particular,
\( \Elem(n, R) \) is the largest perfect subgroup of
\( \GenLin(n, R) \). If
\( R \) is commutative, then
\( \GenLin^0(n, R) = \SpecLin(n, R) \). There are explicit examples
\cite[corollary 7.4]{k1-nil-lin} of commutative rings with finite Bass--Serre dimension and arbitrary large nilpotent class of
\( \mathrm{SK}_1(n, R) = \SpecLin(n, R) / \Elem(n, R) \).

The Bass--Serre dimension is a refinement of the ordinary Krull dimension, in particular, it is always zero for semilocal rings. For the first time this notion implicitly appears in the work
\cite{stab-lin} of Hyman Bass, where the upper bound on the dimension is used to generalize a theorem of Serre on existence of free summands in locally sufficiently isotropic modules over module finite algebras over
\( K \). Also the Bass--Serre dimension is used to bound the number of generators of finitely generated modules
\cite{gen-num} and give upper bounds on various stable ranks
\cite{stab-odd}. Formally these two papers use the \textit{Jacobson dimension}
\( \jdim(K) = \dim(\Max(K)) \) instead of
\( \delta(K) \), but the arguments inside work with
\( \delta(K) \) as well.

The above solvability result is commonly known as
\textit{%
	nilpotency of
	\( \KFunc_1 \)-functor%
}. Later similar results were proved by Bak, Roozbeh Hazrat and Nikolai Vavilov for Chevalley groups \cite{k1-nil-rel, k1-nil-che} and even unitary groups \cite{k1-nil-rel, k1-nil-uni}. Recently the last result was generalized to a wider class of odd unitary groups \cite{k1-nil-odd} under an additional assumption.

We are interested in point groups
\( G(K) \) of reductive group schemes
\( G \) with a local isotropicity condition over a commutative ring
\( K \), i.e.\ ``isotropic reductive groups''. Split reductive groups over rings, also called Chevalley groups, are well studied. Their normal structure is described by Eiichi Abe in
\cite{nor-str-che} except the root systems
\( \RootSys{B}{2} \) and
\( \RootSys{G}{2} \), and by Douglas Costa and Gordon Keller
\cite{nor-str-sp, nor-str-g2} in the remaining cases. Victor Petrov and Anastasia Stavrova started to study isotorpic reductive groups in
\cite{ele-iso}, in particular, they defined the elementary subgroup
\( \Elem_G(K) \) and proved its normality assuming that the isotropic rank of
\( G_{\mathfrak{p}} \) is at least
\( 2 \) at every prime ideal
\( \mathfrak{p} \leqt K \) and there is a proper parabolic subgroup
\( P \leq G \). Later Stavrova and Alexander Luzgarev showed
\cite{ele-per-iso} that
\( \Elem_G(K) \) is perfect unless
\( K \) has residue fields isomorphic to
\( \Field_2\) and simultaneously the absolute root system of
\( G \) has components of types
\( \RootSys{B}{2} \) or
\( \RootSys{G}{2} \). Also Stavrova and Alexey Stepanov
\cite{nor-str-red} proved the normal structure theorem for
\( G(K) \) under the same assumptions plus invertibility of small integer primes in
\( K \) depending on the absolute root system of
\( G \). Recently Leonid Danilevich
\cite{nor-str-odd} generalized the normal structure theorem for ``globally isotropic'' odd orhogonal groups without invertibility of
\( 2 \), so the only remaining ``globally isotropic'' case in this problem is the absolute root system
\( \RootSys{C}{\ell} \).

The functor
\( \KFunc_1 \) of isotropic reductive groups over fields is well studied. Its triviality for simple simple connected groups is known as the Kneser--Tits problem. Though in general there are counterexamples, this problem is affirmatively solved in a number of cases. See
\cite{kne-tit} for general discussion and
\cite{tit-wei-acp, tit-wei-tha} for later results about the particular Tits index
\( \TitsIndex{E}{}{8}{2}{78} \) (the Tits--Weiss conjecture). These results were generalized to semilocal algebras over fields in the preprint
\cite{k1-red-tri}.

In
\cite{ele-loc} we started to study the elementary subgroup
\( \Elem_G \) without assuming existence of a proper parabolic subgroup, so even its definition is non-trivial. In particular, we gave a functorial construction of this subgroup and proved its normality and perfectness under natural assumptions. For example, if
\( P \) is a finitely generated projective module of rank at least
\( 3 \) at every prime ideal, then its group of linear automorphisms
\( \GenLin(P) \) is reductive and of isotropic rank at least
\( 2 \) at every prime ideal, but if
\( P \) is indecomposable, then this group has no proper parabolic subgroups.

In this paper our goal it to prove the following nilpotency theorem for locally isotropic
\( \KFunc_1 \)-functor.
\begin{theorem*}
	Let
	\( K \) be a commutative unital ring and
	\( G \) a reductive group scheme over
	\( K \). Suppose that the Bass--Serre dimension of
	\( K \) is finite, the local isotropic rank of
	\( G \) is at least
	\( 2 \), and
	\( K \) contains a field if the Tits index
	\( \TitsIndex{E}{}{8}{2}{78} \) appears in a maximal isotropic pinning of
	\( G_{ \mathfrak{m} } \). Then
	\( \KFunc_1^G(K) \) is hypoabelian, i.e.\ %
	\( [ \Elem_G(K), \Elem_G(K) ] \leq G(K) \) is the largest perfect subgroup. Moreover,
	\( \KFunc_1^G(K) \) is solvable if in addition
	\begin{itemize}
		
		\item
		either
		\( G \) has an isotropic pinning of rank at least
		\( 2 \) (and
		\( K \) contains a field if
		\( \TitsIndex{E}{}{8}{2}{78} \) appears in this isotropic pinning)
		
		\item
		or
		\( G \) is simple with classical absolute root system (and avoids triality if this root system is of type
		\( \RootSys{D}{4} \)).
		
	\end{itemize}
\end{theorem*}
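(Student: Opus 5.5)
I would follow Bak's localization-completion strategy, adapted to the functorial elementary subgroup \( \Elem_G \) of \cite{ele-loc}. The first step is to build a dimension filtration
\[
	G(K) \geq G^0(K) \geq G^1(K) \geq \ldots \geq \Elem_G(K),
\]
where \( G^0(K) \) is the set of \( g \in G(K) \) whose image in \( G(K_{\mathfrak m}) \) lies in \( \Elem_G(K_{\mathfrak m}) \) for every maximal ideal \( \mathfrak m \). More precisely, it is the set of \( g \) whose images over all local rings (or all semilocalizations) are elementary. Then \( G^i(K) \) is defined as the set of \( g \) that become elementary over \( K_s \) after localizing at elements \( s \) coming from a covering of \( \Max(K) \) of dimension \( i \), in the sense of the Bass--Serre dimension. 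Normality of each \( G^i(K) \) in \( G(K) \) follows from functoriality and normality of \( \Elem_G \). The local input is that over a local ring \( \KFunc_1^G \) is controlled. In the local case of isotropic rank at least \( 2 \) there is a proper parabolic subgroup, and by the Gauss decomposition \( G(K_{\mathfrak m}) = \Elem_G(K_{\mathfrak m}) \cdot L(K_{\mathfrak m}) \), with \( L \) the Levi subgroup. Hence \( G(K)/G^0(K) \) embeds into a product of groups \( \KFunc_1^G(K_{\mathfrak m}) \). These groups are abelian-by-(Whitehead group of the anisotropic kernel) over a field. Proving they are hypoabelian, and abelian modulo the Kneser--Tits obstruction for classical or split-rank situations, is where the \( \TitsIndex{E}{}{8}{2}{78} \) hypothesis enters: there I would invoke the Tits--Weiss results \cite{tit-wei-acp, tit-wei-tha} and their semilocal generalization \cite{k1-red-tri}, which need \( K \) to contain a field.

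The core step is the commutator estimate
\[
	\bigl[ G^0(K), G^i(K) \bigr] \leq G^{i+1}(K).
\]
I would prove it by the usual Quillen--Suslin local-global argument. Given \( g \in G^0(K) \) and \( h \in G^i(K) \), localize at a maximal ideal of the relevant stratum, so that \( g \) becomes elementary there. Then use the relative conjugation calculus, in the form of \( s \)-adic elementary subgroups \( \Elem_G(K, sK) \) and ``Noetherian reduction plus patching'', to show that \( [g, h] \) becomes elementary on an open neighbourhood of a larger set of dimension \( i+1 \). Normality and the standard commutator formulae from \cite{ele-loc} are the tools here. Once \( \delta(K) = d < \infty \), we get \( G^d(K) = \Elem_G(K) \). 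Hence \( G^0(K)/\Elem_G(K) \) is nilpotent of class at most \( d+1 \), and \( \KFunc_1^G(K) \) is nilpotent-by-(subgroup of a product of local \( \KFunc_1 \)). That quotient is hypoabelian, so the whole group is hypoabelian, and \( [\Elem_G, \Elem_G] \) is the largest perfect subgroup. For the last claim I would use perfectness of \( \Elem_G \) up to its derived subgroup, as shown in \cite{ele-per-iso, ele-loc}.

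For solvability, I need the local quotients \( \KFunc_1^G(K_{\mathfrak m}) \) to be solvable of uniformly bounded length. If \( G \) has a global isotropic pinning of rank at least \( 2 \), I would use the associated parabolic \( P = L \ltimes U \) globally. Then \( \KFunc_1^G(K_{\mathfrak m}) \) is a quotient of \( \KFunc_1 \) of the Levi part, and since \( [L,L] \) is again an isotropic group of smaller anisotropic kernel, induction on the semisimple rank gives a bound depending only on the type. In the classical case I would instead identify \( G \) with \( \GenLin \), unitary, or orthogonal groups of a module with form over an Azumaya algebra, and reduce to Bak's and the Bak--Hazrat--Vavilov unitary results \cite{k1-nil-lin, k1-nil-uni}. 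Those results give a uniform solvable bound on \( \KFunc_1 \) of a semilocal ring via the reduced norm and spinor norm. Triality \( \RootSys{D}{4} \) is excluded because such groups have no classical model.

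The main obstacle I expect is the commutator estimate \( [G^0, G^i] \leq G^{i+1} \) in the absence of a global parabolic. In that setting the conjugation calculus has to be done with the locally defined elementary subgroup \( \Elem_G \) of \cite{ele-loc}. There is no uniform root-subgroup presentation, so the estimates on \( \up{g}{\Elem_G(K, s^N K)} \) must be obtained functorially, via faithfully flat descent from a cover where a pinning exists. I would first try to establish a relative version of normality: for every \( g \in G(K_s) \) and every \( N \) there is \( M \) with \( \up{g}{\Elem_G(K, s^M K)} \leq \Elem_G(K, s^N K) \). I would prove it by passing to an étale cover that splits off an isotropic pinning and checking that the relevant bounds descend.
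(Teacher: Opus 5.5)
The central gap is the passage between local and semilocal rings, and it is exactly what separates ``hypoabelian'' from ``solvable'' in the statement.

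To prove \( [ G^0(K), G^i(K) ] \leq G^{i + 1}(K) \) by localization--completion, you may assume \( \delta(K) = i + 1 \) and write \( \Max(K) = X_1 \cup \ldots \cup X_n \) with irreducible \( X_j \) of dimension at most \( i + 1 \). You then need a \emph{single} \( s \) lying outside chosen maximal ideals \( \mathfrak{m}_j \in X_j \) for all \( j \) at once, such that \( g \) is elementary over \( K_s \). Only then is \( \delta(K / s K) \leq i \), hence \( \delta( \widetilde{K}_{(s)} ) \leq i \) by lemma \ref{com-dim}. So \( h \) is elementary over the completion, \( h \in \Elem_G(K)\, G( s^\infty K ) \), and \( [ g, G( s^\infty K ) ] \subseteq \Elem_G( s^\infty K ) \) because \( s^\infty K \) is a \( K_s \)-algebra (lemma \ref{loc-com}).

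Such an \( s \) comes from elementarity of \( g \) over the semilocalization at \( \mathfrak{m}_1, \ldots, \mathfrak{m}_n \). Elements \( s_j \) obtained from each \( K_{ \mathfrak{m}_j } \) separately do not combine into one. So \( G^d(K) \) has to be defined by elementarity over \emph{all} \( K \)-algebras \( E \) with \( \delta(E) \leq d \), in particular \( G^0 \) over all semilocal algebras. Your \( G^i \), built from localizations of \( K \), cannot be evaluated on \( \widetilde{K}_{(s)} \), and your ``patching on an open neighbourhood'' gives no mechanism for using the hypothesis on \( h \).

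Consequently \( G(K) / G^0(K) \) embeds into a product of groups \( \KFunc_1^G(E) \) over semilocal \( E \), not local ones. Over a semilocal ring the hypotheses do not provide an isotropic pinning of rank \( \geq 2 \), hence no Gauss decomposition. The paper bridges this with a second localization--completion filtration by the number of maximal ideals, \( [ G^{ [1] }(E), G^{ [n] }(E) ] \leq G^{ [n + 1] }(E) \) (theorem \ref{sol-card}). This yields solvability of \( \KFunc_1^G(E) \) only with a length bound growing with \( | \Max(E) | \), and a product of such groups is merely hypoabelian.

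Solvability requires a bound uniform in \( E \). With a global pinning, theorem \ref{sol-sem-iso} applies directly over every semilocal \( E \). For classical groups the paper proves a uniform bound separately (theorem \ref{sol-sem-loc}) via the splitting lemma \ref{spl-uni}. The results of \cite{k1-nil-lin, k1-nil-uni} cannot replace this: they concern elementary subgroups defined by a global matrix or hyperbolic structure, which a locally isotropic classical group need not have.

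As a sanity check, over a local ring isotropic rank \( \geq 2 \) already yields a pinning of rank \( \geq 2 \), for which theorem \ref{sol-sem-iso} gives an absolute bound. Your scheme (local \( G^0 \) with \( G^0 / \Elem_G \) nilpotent) would therefore prove solvability for every locally isotropic \( G \), which the theorem deliberately does not claim.

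Your local step also fails as proposed. For the Levi subgroup \( L \) of a maximal isotropic pinning over a local ring, \( [L, L] \) is anisotropic, so there is no induction through ``\( [L, L] \) is isotropic with smaller anisotropic kernel''. Moreover \( L(K) \) may contain non-trivial perfect subgroups (points of anisotropic semisimple groups). What has to be shown is that \( L(K) \) is solvable modulo \( L(K) \cap \Elem_G(K) \).

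The paper does this Tits index by Tits index. It places \( [L, L] \) inside the subgroup generated by \( [L, L] \) and \( U_{ \pm \alpha } \), or by the root subgroups of a classical relative subsystem. It identifies that subgroup via theorem \ref{twi-iso-cla} with a linear, unitary or orthogonal group with a hyperbolic summand, and uses that such \( \KFunc_1 \) over semilocal rings is abelian (lemmas \ref{lin-ab-k1}, \ref{uni-ab-k1}, \ref{ort-ab-k1}, \ref{2a-ab-k1}). The Kneser--Tits input of \cite{k1-red-tri} is needed only for \( \TitsIndex{E}{}{8}{2}{78} \).

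Finally, when residue fields \( \Field_2 \) occur together with components of type \( \RootSys{B}{2} \) or \( \RootSys{G}{2} \), \( \Elem_G(K) \) need not be perfect. Perfectness of \( [ \Elem_G(K), \Elem_G(K) ] \) is not in \cite{ele-per-iso, ele-loc}; it is theorem \ref{perfect}, proved by explicit commutator identities.
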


The condition on the isotropic rank is the exact analogue of the inequality
\( n \geq 3 \) from the linear case.

We consider only the absolute case for several reasons. First of all, not all
\textit{levels} (i.e.\ parameters classifying relative elementary subgroups) are ideals of
\( K \) if
\( 6 \) is not invertible and the absolute root system of
\( G \) is not simply laced. Next, the normal structure theorem itself is not proved in full generality even over local rings (there remains the case of twisted symplectic groups with non-invertible
\( 2 \in K \) and isotropic rank
\( 2 \)), so we have only a conjectural list of possible levels in some cases. Finally, if
\( G = \mathrm{Sp}(4, K) \) and
\( K \) has residue fields isomorphic to
\( \Field_2 \), then the normal structure theorem uses radices \cite{nor-str-sp}, and these algebraic structures are highly complicated compared to ideals and Abe's
\textit{admissible pairs}
\cite{nor-str-che}.

The paper is organized as follows. In \S 2 we recall the definition of Bass--Serre dimension. Sections 3--5 contain general results about elementary subgroups of locally isotropic reductive groups, they form a natural continuation of
\cite{ele-loc}. In particular, in theorem
\ref{perfect} we prove that the derived subgroup
\( [ \Elem_G(K), \Elem_G(K) ] \) is always perfect even if
\( \Elem_G(K) \) itself is not, this fact is well known for Chevalley groups. In \S 6 we prove our first main results about nilpotency of
\( \KFunc_1 \), namely, we construct the dimension filtration and show all its basic properties except solvability of
\( G(K) / G^0(K) \) in theorem
\ref{sol-dim}. We use Bak's localization-completion method in lemma
\ref{loc-com}, but direct calculations from Bak's work are replaced by abstract arguments using colocalization as in
\cite{ele-loc}.

Solvability of
\( \KFunc_1 \) over semilocal rings is considered in remaining sections. In \S 7 we recall some general theory of odd unitary groups, including explicit description of all simple group schemes with classical absolute root systems up to isogeny (lemma
\ref{twi-cla}) and with a classical isotropic pinning up to isogeny (theorem
\ref{twi-iso-cla}). If the absolute root system has type
\( \mathsf{D}_4 \), we have to impose additional condition that the group scheme actually has a corresponding Azumaya algebra with involution and an odd form parameter, i.e.\ that the group scheme
\textit{avoids triality}. In theorem
\ref{sol-sem-iso} from \S 8 we prove that
\( \KFunc_1 \) is solvable over semilocal rings if the group scheme is sufficiently globally isotropic, e.g.\ if the ring is local. This is done case by case. Almost all Tits indices are reduced to various facts about unitary groups, but in the case
\( \TitsIndex{E}{}{8}{2}{78} \) we refer to
\cite{k1-red-tri} thus getting the additional condition in the main theorem. If the group scheme over a semilocal ring is only locally isotropic, then we still prove that
\( \KFunc_1 \) is solvable in theorem
\ref{sol-card}, but the solvability length depends on the number of maximal ideals. We also prove a uniform bound on the solvability length in theorem
\ref{sol-sem-loc}, but only for classical groups. Finally, in the last section 10 we give the main result (theorem
\ref{sol-main}) with a couple of remarks and a list of open problems.

The main technical differences in comparison with previous results about nilpotency of
\( \KFunc_1 \) are the following. We give an alternative conceptual proof of the localization-completion method via colocalization instead of direct calculation. Solvability of globally isotropic
\( \KFunc_1 \) over semilocal rings requires additional work, especially for
\( \TitsIndex{E}{}{7}{2}{31} \), and the full generality of
\( \TitsIndex{E}{}{8}{2}{78} \) is still open. Solvability for arbitrary locally isotropic reductive
\( \KFunc_1 \) over semilocal rings actually uses additional
\textit{cardinality filtration} introduced in \S 9. Finally, a uniform bound on the solvability length (for classical locally isotropic groups) uses a separate non-trivial argument from lemmas
\ref{spl-lin} and
\ref{spl-uni}, this is a further development of the localization technique from
\cite{ele-loc}.

The author wants to thank Anastasia Stavrova for giving motivation to work on this problem.

\section{Bass--Serre dimension}

In this paper we fix a commutative unital ring
\( K \). Its maximum spectrum
\( \Max(K) \) is the set of its maximal ideals with the Zariski topology, i.e.\ principal open subsets
\[
	\mathcal{D}(s)
	=
	\{
		\mathfrak{m} \in \Max(K)
		\mid
		s \notin \mathfrak{m}
	\}
\]
form a basis of this topology and closed sets are precisely the sets of form
\[
	\ClosedSet( \mathfrak{a} )
	=
	\{
		\mathfrak{m} \in \Max(K)
		\mid
		\mathfrak{a} \leq \mathfrak{m}
	\}
\]
for various ideals
\( \mathfrak{a} \leqt K \). Unlike the ordinary spectrum
\( \Spec(K) \), this construction is not functorial in general, but if
\( f \colon K \to E \) is an integral homomorphism, then there is a well-defined continuous map
\[
	\Max(f) \colon \Max(E) \to \Max(K),\,
	\mathfrak{m} \mapsto f^{- 1}( \mathfrak{m} ).
\]

Recall that a topological space
\( X \) is called
\textit{Noetherian} if every increasing chain of its open subsets stabilizes, equivalently, if every decreasing chain of its closed subsets stabilizes. If
\( X \) is Noetherian, then all its subspaces are Noetherian. Conversely, if
\( X = \bigcup_{i = 1}^N \) for Noetherian subspaces
\( X_i \subseteq X \), then
\( X \) is Noetherian. So we have implications
\[
	K \text{ is Noetherian}
	\Rightarrow
	\Spec(K) \text{ is Noetherian}
	\Rightarrow
	\Max(K) \text{ is Noetherian}.
\]

A Noetherian space
\( X \) is
\textit{irreducible} if it is non-empty and it cannot be decomposed into a union of two proper closed subsets. Every Noetherian space
\( X \) has a unique decomposition
\( X = \bigcup_{i = 1}^N X_i \) into its
\textit{irreducible components}, i.e.\ %
\( X_i \) are irreducible closed subsets of
\( X \) and
\( X_i \not \subseteq X_j \) for
\( i \neq j \). Clearly, every irreducible subset of
\( X \) has irreducible closure and it is contained in one of the components
\( X_i \). Since irreducible spaces are connected,
\( X \) has also a unique decomposition into finitely many
\textit{connected components}. So if
\( \Max(K) \) is Noetherian, then there is a unique decomposition
\[
	K = K_1 \times \ldots \times K_n
\]
into a product of indecomposable rings.

The
\textit{combinatorial dimension}
\( \dim(X) \) of a Noetherian space
\( X \) is the supremum of
\( d \geq 0 \) such that there exists a chain
\[
	X_0
	\subset
	X_1
	\subset
	\ldots
	\subset
	X_d
	\subseteq
	X
\]
of distinct closed irreducible subsets in
\( X \). We also define
\( \dim(\varnothing) = - \infty \) and
\( \dim(X) = + \infty \) for non-Noetherian
\( X \). Now let
\( \delta(X) \) be the smallest
\( d \) such that
\( X \) has a cover by finitely many subspaces of dimension at most
\( d \) (they can be assumed to be irreducible, but not necessarily closed). In particular,
\( \delta(X) \geq 0 \) unless
\( X = \varnothing \) (and
\( \delta(\varnothing) = - \infty \)), and
\( \delta(X) = + \infty \) if
\( X \) is not Noetherian.

The
\textit{Krull dimension} of
\( K \) is
\( \dim(K) = \dim(\Spec(K)) \), the
\textit{Jacobson dimension} (using the terminology from
\cite{k1-nil-rel}) is
\( \jdim(K) = \dim(\Max(K)) \), and, finally, the \textit{Bass--Serre dimension} is
\( \delta(K) = \delta(\Max(K)) \). We have
\[
	\delta(K) \leq \jdim(K) \leq \dim(K),
\]
here both inequalities can be strict. Indeed, if
\( K \) is a local ring, then
\( \Max(K) \) is a one-point space and
\( \delta(K) = \jdim(K) = 0 \), but
\( \dim(K) \) can be arbitrary non-negative. On the other hand, if
\( K \) is Noetherian of finite Krull dimension
\( \dim(K) = d \) and
\( \dim(K / \Jac(K)) < d \) (e.g.\ %
\( K = \mathbb{C}[[ s_1, \ldots, s_d ]] \)), then
\( \jdim(K[t]) = \dim(K[t]) = d + 1 \), but
\( \delta(K[t]) \leq d \) by
\cite[proposition III.3.13]{k-theory}. Actually, it is easy to construct examples with any properties of
\( \Max(K) \) using that a topological space
\( X \) is homeomorphic to the maximal spectrum of a commutative unital ring if and only if
\( X \) is
\( \mathrm{T}_1 \) and quasi-compact
\cite[proposition 11]{spectra}.

The next theorem collects some basic properties of the Bass--Serre dimension. Similar results for the Jacobson dimension and the property of
\( \Max(K) \) to be Noetherian can be found in
\cite{noeth}.

\begin{theorem}
	\label{bas-ser-dim}
	The Bass--Serre dimension satisfies the following.
	\begin{enumerate}
		
		\item
		\( \delta(K) \leq 0 \) if and only if
		\( K \) is semilocal;
		
		\item
		\( \delta( K / \mathfrak{a} ) \leq \delta(K) \) for any ideal
		\( \mathfrak{a} \leqt K \);
		
		\item
		if
		\( \mathfrak{a} \leqt K \) is contained in the Jacobson radical
		\( \Jac(K) \leqt K \), then
		\( \delta( K / \mathfrak{a} ) = \delta(K) \);
		
		\item
		\(
			\delta( K_1 \times K_2 )
			=
			\max( \delta(K_1), \delta(K_2) )
		\);
		
		\item
		if
		\( K \to E \) is a finite homomorphism, then
		\( \delta(K) \geq \delta(E) \);
		
	\end{enumerate}
\end{theorem}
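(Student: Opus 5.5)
We reduce every item to a statement about the topological invariant \( \delta(X) \) of a space \( X \), combined with the standard identifications of \( \Max \) of quotients, products and finite extensions. First we record two elementary topological facts. If \( Y \subseteq X \) is any subspace, then \( \delta(Y) \leq \delta(X) \): intersecting a finite cover of \( X \) by subspaces \( X_i \) with \( \dim(X_i) \leq d \) gives a cover of \( Y \), and \( \dim(X_i \cap Y) \leq \dim(X_i) \). The latter holds because a chain of distinct closed irreducible subsets of a subspace \( Z \subseteq W \) gives, after taking closures in \( W \), a chain of distinct closed irreducible subsets of \( W \); closures of irreducible sets are irreducible, and \( C = \overline{C} \cap Z \) recovers \( C \) from its closure. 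Second, for a finite union \( X = Y_1 \cup Y_2 \) we have \( \delta(X) \leq \max(\delta(Y_1), \delta(Y_2)) \), by concatenating the covers; Noetherianity is preserved by finite unions, as noted above.

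\emph{Items 1--4.} For (1), \( \delta(K) \leq 0 \) means that \( \Max(K) \) is Noetherian and covered by finitely many subspaces of dimension \( \leq 0 \). An irreducible subspace of dimension \( 0 \) is a single point: \( \Max(K) \) is \( \mathrm{T}_1 \), so points are closed, and any two points of an irreducible subspace \( Z \) give a chain \( \{x\} \subset Z \) of length \( 1 \). Hence \( \Max(K) \) is finite, i.e.\ \( K \) is semilocal. Conversely, a finite \( \mathrm{T}_1 \) space is Noetherian and is covered by its finitely many points, each of dimension \( 0 \); the case \( K = 0 \) gives \( - \infty \). For (2), \( \Max(K / \mathfrak{a}) \) is homeomorphic to the closed subspace \( \ClosedSet(\mathfrak{a}) \subseteq \Max(K) \), so the subspace fact applies. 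For (3), if \( \mathfrak{a} \leq \Jac(K) \), then \( \ClosedSet(\mathfrak{a}) = \Max(K) \), so the two spaces are homeomorphic. For (4), \( \Max(K_1 \times K_2) \) is the disjoint union of the clopen copies of \( \Max(K_1) \) and \( \Max(K_2) \). The inequality \( \leq \) follows from the union fact, and \( \geq \) follows from the subspace fact.

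\emph{Item 5, the main step.} Let \( f \colon K \to E \) be finite, so \( f \) is integral and \( \phi = \Max(f) \colon \Max(E) \to \Max(K) \) is a continuous map. First we replace \( K \) by \( K / \Ker(f) \), which by (2) does not increase \( \delta \); so we may assume \( f \) injective. The key properties of \( \phi \) are the following. It is closed, since by lying over and going up the image of \( \ClosedSet(\mathfrak{b}) \) is \( \ClosedSet(f^{-1}(\mathfrak{b})) \). Its fibres are finite, since the fibre over \( \mathfrak{m} \) is \( \Max(E / \mathfrak{m} E) \) and \( E / \mathfrak{m} E \) is a finite-dimensional algebra over \( K / \mathfrak{m} \). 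Finally, it is injective on chains: by incomparability, if \( Z \subset Z' \) are distinct closed irreducible subsets of \( \Max(E) \), then \( \phi(Z) \subset \phi(Z') \) are distinct closed irreducible subsets. The argument for this last claim: pass to generic primes, \( Z = \ClosedSet(\mathfrak{q}) \) with \( \mathfrak{q} \) prime, which is possible since the closure of an irreducible set of maximal ideals equals \( \ClosedSet \) of the intersection, and that intersection is prime; then strict inclusion \( \mathfrak{q}' \subset \mathfrak{q} \) contracts to a strict inclusion. This requires \( \mathfrak{q} \) to be the intersection of the maximal ideals containing it, which holds by construction, together with the equality \( \ClosedSet(\mathfrak{q} \cap K) = \phi(\ClosedSet(\mathfrak{q})) \), which holds by going up.

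With these properties, the argument runs as follows. Take a finite cover \( \Max(K) = \bigcup_i X_i \) with irreducible \( X_i \) of dimension \( \leq d \). First, \( \Max(E) \) is Noetherian: a strictly descending chain of closed sets would map to a descending chain which stabilizes, and finite fibres together with the chain-injectivity should then force stabilization. Here I expect to argue via irreducible components, using the finiteness of the fibres. Next, cover \( \Max(E) \) by the sets \( \phi^{-1}(X_i) \) intersected with the finitely many irreducible components of \( \Max(E) \). For an irreducible subspace \( W \subseteq \phi^{-1}(X_i) \), a chain of closed irreducible subsets of \( W \) has closures in \( \Max(E) \) whose images are irreducible and closed with distinct closures. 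One must then check that their traces on \( X_i \) remain distinct, which gives \( \dim W \leq \dim X_i \leq d \).

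The main obstacle is exactly this comparison of dimensions for non-closed subspaces \( X_i \): distinct closed sets in \( \Max(K) \) may have equal traces on \( X_i \). If this step resists, I would fall back on the characterization of \( \delta \) through a finite stratification into locally closed pieces. Alternatively, I would quote the argument of \cite[proposition III.3.13]{k-theory}, where Bass's dimension function is shown to behave well under finite morphisms.
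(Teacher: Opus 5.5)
\textbf{Assessment.} Items 1--4 are correct and match the paper. For item 5 your overall strategy is also the paper's. You pull back a cover \( \Max(K) = \bigcup_i X_i \) along \( \phi = \Max(f) \). You then bound \( \dim \phi^{-1}(X_i) \) through closures in \( \Max(E) \), using that \( \phi \) is closed and, by incomparability, strictly monotone on closed irreducible subsets. Intersecting with irreducible components of \( \Max(E) \) is unnecessary, since the covering pieces in the definition of \( \delta \) need not be irreducible. However, item 5 is left unfinished at two points.

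\textbf{First gap: distinctness of the traces on \( X_i \).} You flag this as the main obstacle and leave it open, but it closes in one line. Write \( X = \Max(K) \), \( Y = \Max(E) \), and let \( Z_0 \subset \ldots \subset Z_{d + 1} \) be distinct closed irreducible subsets of \( \phi^{-1}(X_i) \). Put \( A_j = \phi(Z_j) \subseteq X_i \); then the sets \( \phi(\Closure_Y(Z_j)) = \Closure_X(A_j) \) are pairwise distinct. For every \( A \subseteq X_i \) one has \( \Closure_X(A) = \Closure_X(\Closure_{X_i}(A)) \). Hence \( \Closure_{X_i}(A_j) = \Closure_{X_i}(A_k) \) would force \( \Closure_X(A_j) = \Closure_X(A_k) \). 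So the sets \( \Closure_{X_i}(A_j) \) form a chain of \( d + 2 \) distinct closed irreducible subsets of \( X_i \), contradicting \( \dim(X_i) \leq d \). This is exactly how the paper concludes. Your worry is valid for arbitrary closed subsets of \( \Max(K) \), but only closures of subsets of \( X_i \) occur here, so the fallbacks you mention are not needed.

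\textbf{Second gap: Noetherianity of \( \Max(E) \).} This is not established. Arguing via irreducible components of \( \Max(E) \) is circular, since those components exist only once Noetherianity is known. Moreover, the properties you list do not imply it: \( \phi \) closed and continuous, finite fibres, and strict monotonicity on closed irreducible sets. For example, give \( \mathbb{N} \) the cofinite topology, and let the closed subsets of \( \mathbb{N} \times \{ a, b \} \) be the finite sets together with the sets containing \( \mathbb{N} \times \{ a \} \). The projection \( \mathbb{N} \times \{ a, b \} \to \mathbb{N} \) then has all these properties, yet its source is not Noetherian: the sets \( \mathbb{N} \times \{ a \} \cup \{ k \geq n \} \times \{ b \} \) form a strictly descending chain.

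Some algebraic input is therefore needed. The paper simply quotes Ohm--Pendleton \cite[theorem 3.6]{noeth}, and you should either cite it or prove it. One route is Noetherian induction on \( \Max(K) \), using two facts:
\begin{itemize}
\item only finitely many primes of \( E \) lie over a given prime \( \mathfrak{p} \) of \( K \);
\item the \( K / \mathfrak{p} \)-torsion of the finite module \( E / \mathfrak{p} E \) is killed by a non-zero element of \( K / \mathfrak{p} \).
\end{itemize}
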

\begin{proof}
	The space
	\( \Max( K / \mathfrak{a} ) \) is canonically homeomorphic to the closed subset
	\( \ClosedSet( \mathfrak{a} ) \subseteq \Max(K) \). Recall from
	\cite{noeth} that an ideal
	\( \mathfrak{a} \leqt K \) is called
	\( \Jac \)-radical if it coincides with
	\[
		\Jac_K(\mathfrak{a})
		=
		\bigcap
			\bigl\{
				\mathfrak{m} \in \Max(K)
				\mid
				\mathfrak{m} \geq \mathfrak{a}
			\bigr\}.
	\]
	We have
	\(
		\ClosedSet( \mathfrak{a} )
		=
		\ClosedSet( \Jac_K( \mathfrak{a} ) )
	\) and
	\( \Jac_K( \mathfrak{a} ) \) is always
	\( \Jac \)-radical, so closed subsets of
	\( \Max(K) \) are in a one-to-one order reversing correspondence with
	\( \Jac \)-radical ideals. A
	\( \Jac \)-radical ideal corresponds to an irreducible subset if and only if it is prime.
	\begin{enumerate}
		
		\item
		The ring
		\( K \) is semilocal if and only if
		\( \Max(K) \) is a finite topological space (necessarily discrete). Any finite space
		\( X \) has
		\( \delta(X) \leq 0 \). On the other hand, if a
		\( \mathrm{T}_1 \) space
		\( X \) has
		\( \delta(X) \leq 0 \), then it is a union of finitely many irreducible subspaces of zero combinatorial dimension, i.e.\ one-point subsets.
		
		\item
		If
		\( A \subseteq X \) is a subspace of a topological space, then
		\( \dim(A) \leq \dim(X) \) and
		\( \delta(A) \leq \delta(X) \).
		
		\item
		If
		\( \mathfrak{a} \leq \Jac(K) \), then
		\( \ClosedSet( \mathfrak{a} ) = \Max(K) \).
		
		\item
		If
		\( X = \bigcup_{i = 1}^n X_i \) is a finite cover of a topological space by arbitrary subspaces, then
		\( \delta(X) = \max_i \delta(X_i) \).
		
		\item
		By
		\cite[theorem 3.6]{noeth}, if
		\( f \colon K \to E \) is a finite homomorphism and
		\( X = \Max(K) \) is Noetherian, then
		\( Y = \Max(E) \) is Noetherian. Let
		\( X = \bigcup_{i = 1}^n X_i \) be a cover by subsets of combinatorial dimension
		\( \leq d \) and let
		\[
			Y_i = \Max(f)^{- 1}(X_i) \subseteq Y,
		\]
		so
		\( Y = \bigcup_{i = 1}^n Y_i \). We claim that
		\( \dim(Y_i) \leq d \). Indeed, assume that
		\( d \) is finite and there is a chain
		\[
			Z_0
			\subset
			Z_1
			\subset
			\ldots
			\subset
			Z_{d + 1}
			\subseteq
			Y_i
		\]
		of distinct closed irreducible subsets. The closure
		\( \Closure_Y(Z_j) \) of every
		\( Z_j \) is a closed irreducible subset of
		\( Y \) and these closures still form a chain of distinct subsets. The map
		\( \Max(f) \colon Y \to X \) is closed because
		\( f \) is integral, namely,
		\[
			\Max(f)( \ClosedSet( \mathfrak{a} ) )
			=
			\ClosedSet( f^{- 1}( \mathfrak{a} ) ).
		\]
		Then every
		\[
			\Max(f)\bigl( \Closure_Y(Z_i) \bigr)
			=
			\Closure_X\bigl( \Max(f)(Z_i) \bigr)
		\]
		is an irreducible closed subset of
		\( X \). Moreover,
		\( \Max(f)\bigl( \Closure_Y(Z_i) \bigr) \) are distinct because otherwise the
		\( \Jac \)-radical prime ideals corresponding to
		\( \Closure_Y(Z_i) \) lie over the same
		\( \Jac \)-radical prime ideal of
		\( K \) by
		\cite[proposition 3.1]{noeth}. It follows that
		\[
			\Closure_{X_i}\bigl( \Max(f)(Z_0) \bigr)
			\subset
			\Closure_{X_i}\bigl( \Max(f)(Z_1) \bigr)
			\subset
			\ldots
			\subset
			\Closure_{X_i}\bigl( \Max(f)(Z_{d + 1}) \bigr)
			\subseteq
			X_i
		\]
		is a chain of distinct closed irreducible subsets, a contradiction.
		\qedhere
		
	\end{enumerate}
\end{proof}

\section{Locally isotropic reductive groups}

In this paper the term
\textit{root system} means a crystallographic possibly non-reduced root system, i.e.\ it can have components of type
\( \RootSys{BC}{\ell} \) for
\( \ell \geq 1 \). We say that roots of the largest length
\( \lambda \) in a given component are
\textit{long}, roots with length strictly between
\( \lambda / 2 \) and
\( \lambda \) are
\textit{short}, and for a component of type
\( \RootSys{BC}{\ell} \) roots with length
\( \lambda / 2 \) are
\textit{ultrashort}. So in the case of
\( \RootSys{BC}{1} \) there are only two long roots and two ultrashort ones. A basis of a root system
\( \Phi \) is usually denoted by
\( \Delta \) (it consists of short and ultrashort roots for components of type
\( \RootSys{BC}{\ell} \)), and
\( \Phi^{+} \),
\( \Phi^{-} \) are the sets of positive and negative roots with respect to a basis
\( \Delta \).

Now let
\( G \) be a reductive group scheme over
\( K \) in the sense of
\cite{red-grp-sch}, i.e.\ a smooth affine group schemes such that its geometric fibers
\( G_{ \overline{ \kappa( \mathfrak{p} ) } } \) are connected reductive algebraic groups over the algebraic closures
\( \overline{ \kappa( \mathfrak{p} ) } \) of all residue fields. We call
\( G \)
\textit{simple} if it is a twisted form of a Chevalley--Demazure group scheme
\( \ChevGrp^\Lambda_K( \widetilde{\Phi}, {-} ) \) for an irreducible
\textit{absolute root system}
\( \widetilde{\Phi} \) and some choice of a weight lattice
\( \Lambda \). Over fields such group schemes are often called absolutely almost simple.

Suppose that
\( K \) is local. Then
\( G \) has a maximal split torus and all such tori are conjugate by the group
\( G(K) \)
\cite[Exp. XXVI, proposition 6.16]{red-grp-sch}. Choose such a torus
\( T \leq G \). The weight decomposition of the Lie algebra
\( \liealg \) of
\( G \) with respect to the action of
\( T \) has the form
\[
	\liealg
	=
	\liealg_0
	\oplus
	\bigoplus_{ \alpha \in \Phi }
		\liealg_\alpha,
\]
where
\( \Phi \) is a (possibly non-reduced) root system called the
\textit{relative root system} in the character lattice of
\( T \) and all weight subspaces are non-zero free
\( K \)-modules. There exists an \'etale extension
\( K \subseteq \widetilde{K} \) such that there is a maximal torus
\(
	T_{ \widetilde{K} }
	\leq
	\widetilde{T}
	\leq
	G_{ \widetilde{K} }
\) and
\( G_{ \widetilde{K} } \) has a splitting with the maximal torus
\( \widetilde{T} \). Without loss of generality
\( \widetilde{K} \) is also local, so there is the
\textit{absolute root system}
\( \widetilde{\Phi} \) in the character lattice of
\( \widetilde{T} \). The inclusion
\( T_{ \widetilde{K} } \to \widetilde{T} \) induces a map
\[
	u
	\colon
	\widetilde{\Phi} \sqcup \{ 0 \}
	\to
	\Phi \sqcup \{ 0 \}.
\]

This map
\( u \) is independent of the choices of
\( \widetilde{K} \) and
\( \widetilde{T} \) up to a linear isomorphism between the character lattices of the split tori. There is a complete description of all possible such maps
\cite{tits-ind}. It is easy to see e.g.\ following
\cite{ele-loc} that there are canonical group subschemes
\( L \leq G \) and
\( U_\alpha \leq G \) for
\( \alpha \in \Phi \) such that
\begin{align*}
		L_{ \widetilde{K} }
		&=
		\bigl\langle
			\widetilde{T},
			\widetilde{U}_\beta
			\mid
			u(\beta) = 0
		\bigr\rangle,
	&
		( U_\alpha )_{ \widetilde{K} }
		&=
		\bigl\langle
			\widetilde{U}_\beta
			\mid
			u(\beta) \in \{ \alpha, 2 \alpha \}
		\bigr\rangle
\end{align*}
as fppf sheaves, where
\( \widetilde{U}_\beta \leq G_{ \widetilde{K} } \) are root subgroups associated with
\( \widetilde{T} \). In particular,
\( \liealg_0 \) is the Lie algebra of
\( L \),
\( \liealg_\alpha \) is the Lie algebra of
\( U_\alpha \) for non-ultrashort
\( \alpha \), and
\( \liealg_\alpha \oplus \liealg_{ 2 \alpha } \) is the Lie algebra of
\( U_\alpha \) otherwise. Actually,
\( L \) is the scheme centralizer of
\( T \), as a group scheme it is reductive.

Now let
\( K \) be an arbitrary ring and
\( G \) a reductive group scheme over
\( K \). We need an analogue of the above subgroups
\( L \) and
\( U_\alpha \) to exist Zariski locally and to make this precise we give the next definition. A data
\[
	\bigl(
			u
			\colon
			\widetilde{\Phi} \sqcup \{ 0 \}
			\to
			\Phi \sqcup \{ 0 \}
		,
			\Delta
		,
			L
		,
			( U_\alpha )_{ \alpha \in \Phi }
		,
			( w_\alpha )_{ \alpha \in \Delta }
	\bigr)
\]
is an
\textit{isotropic pinning} if the following holds.
\begin{itemize}
	
	\item
	The map
	\( u \colon \widetilde{\Phi} \sqcup \{ 0 \} \to \Phi \sqcup \{ 0 \} \) is one of the possible maps
	\cite{tits-ind} from the absolute root system to the relative one. More precisely,
	\( u \) is induced by a linear map
	\( \Real \widetilde{\Phi} \to \Real \Phi \), so every component of
	\( \widetilde{\Phi} \) either lies in
	\( \Ker(u) \) or non-trivially maps to a unique component of
	\( \Phi \) by e.g.\ %
	\cite[lemma 1]{ele-loc}. Also for every component
	\( \Phi_i \subseteq \Phi \) there is at least one component
	\( \widetilde{\Phi}_j \subseteq \widetilde{\Phi} \) non-trivially mapping to
	\( \Phi_i \) and all such maps are induced by the same
	\textit{irreducible Tits index}. The complete list of these indices is well-known and can be found in
	\cite{tits-ind}. In particular,
	\(
		\widetilde{\Phi}_j \sqcup \{ 0 \}
		\to
		\Phi_i \sqcup \{ 0 \}
	\) is surjective.
	
	\item
	\( \Delta \) is a basis of
	\( \Phi \).
	
	\item
	\( L \) and
	\( U_\alpha \) for
	\( \alpha \in \Phi \) are group subschemes of
	\( G \).
	
	\item
	Fppf locally there exists a
	\textit{pinning} of
	\( G \), i.e.\ an isomorphism with the Chevalley--Demazure group scheme
	\(
		\mathbb{G}(
			\widetilde{X}^{\vee},
			\widetilde{\Phi}^{\vee};
			\widetilde{X},
			\widetilde{\Phi};
			K
		)
	\) for a root datum
	\(
		(
			\widetilde{X}^{\vee},
			\widetilde{\Phi}^{\vee};
			\widetilde{X},
			\widetilde{\Phi}
		)
	\) with given absolute root system
	\( \widetilde{\Phi} \) such that
	\begin{align*}
			L
			&=
			\bigl\langle
				\widetilde{T},
				\widetilde{U}_\beta
				\mid
				u(\beta) = 0
			\bigr\rangle,
		&
			U_\alpha
			&=
			\bigl\langle
				\widetilde{U}_\beta
				\mid
				u(\beta) \in \{ \alpha, 2 \alpha \}
			\bigr\rangle
	\end{align*}
	as fppf sheaves. Here
	\( \widetilde{T} \) is the maximal torus of
	\( G \) and
	\( \widetilde{U}_\alpha \) are the corresponding root subgroups given by the isomorphism with the Chevalley--Demazure group scheme.
	
	\item
	Elements
	\(
		w_\alpha
		\in
		U_\alpha(K)\, U_{ - \alpha } (K)\, U_\alpha(K)
	\) are
	\textit{Weyl elements}, i.e.\ %
	\( \up{w_\alpha}{U_\beta} = U_{s_\alpha(\beta)} \) and
	\( \up{w_\alpha}{L} = L \), where
	\(
		s_\alpha(\beta)
		=
		\beta
		-
		2
		\frac{
			\langle \alpha, \beta \rangle
		}{
			\langle \alpha, \alpha \rangle
		}
		\alpha
	\) is the standard reflection. If
	\( \alpha \) is the ultrashort basic root (if it exists), then we additionally require that
	\(
		w_\alpha
		\in
		U_{ 2 \alpha }(K)\,
		U_{ - 2 \alpha }(K)\,
		U_{ 2 \alpha } (K)
	\).
	
\end{itemize}
We do not distinguish isotropic pinnings with isomorphic maps
\( u \). More precisely, two isotropic pinnings
\(
	\bigl(
			u_i
			\colon
			\widetilde{\Phi_i} \sqcup \{ 0 \}
			\to
			\Phi_i \sqcup \{ 0 \}
		,
			\Delta_i
		,
			L
		,
			( U_{i \alpha} )_{ \alpha \in \Phi }
		,
			( w_{i \alpha} )_{ \alpha \in \Delta }
	\bigr)
\) for
\( i = 1, 2 \) are identified if there are isomorphisms
\( f \colon \Phi_1 \to \Phi_2 \) and
\(
	\widetilde{f}
	\colon
	\widetilde{\Phi}_1
	\to
	\widetilde{\Phi}_2
\) such that
\( u_2 \circ \widetilde{f} = f \circ u_1 \),
\( \Delta_2 = f(\Delta_1) \),
\( U_{ 1 \alpha } = U_{ 2 f(\alpha) } \), and
\( w_{ 1 \alpha } = w_{ 2 f(\alpha) } \).

The elements
\( w_\alpha \) with the required properties always exists Zariski locally by
\cite[theorem 5(3)]{weyl-ele}, at least if
\( G \) is simple (the case of adjoint
\( G \) with the Tits index
\( \TitsIndex{A}{1}{1}{1}{ (1) } \) is easy, see the example in the end of
\cite{weyl-ele}). Also, for simple
\( G \) an element
\( w \in U_\alpha(K)\, U_{ - \alpha }(K)\, U_\alpha(K) \) is Weyl in our sense if and only if
\( \up{w}{ U_\beta(K) } = U_{ s_\alpha(\beta) }(K) \) for all roots
\( \beta \) by
\cite[theorem 5(1)]{weyl-ele}, as in the standard definition (except the case of adjoint
\( G \) with the Tits index
\( \TitsIndex{A}{1}{1}{1}{ (1) } \) by the example in the end of
\cite{weyl-ele}). If
\( L \) and
\( U_\alpha \) are constructed by a maximal split torus over a local ring
\( K \), then the existence of Weyl elements follows from
\cite[Exp. XXVI, \S 7]{red-grp-sch}.

By fppf descent (see also
\cite[\S 2]{ele-loc}) root subgroups are canonically isomorphic to sheaves of modules or
\textit{%
	\( 2 \)-step nilpotent modules%
}. Namely, for every isotropic pinning there is a canonical decomposition
\[
	\liealg
	=
	\liealg_0
	\oplus
	\bigoplus_{ \alpha \in \Phi }
		\liealg_\alpha.
\]
If
\( \alpha \in \Phi \) is not ultrashort, then there is a canonical isomorphism
\[
	t_\alpha
	\colon
	\ModSheaf( \liealg_\alpha )
	\to
	U_\alpha
\]
of group schemes, where
\(
	\ModSheaf( \liealg_\alpha )(R)
	=
	\liealg_\alpha \otimes_K R
\) as a functor on the category of commutative unital
\( K \)-algebras. In the ultrashort case there is a canonical isomorphism
\[
	t_\alpha
	\colon
	\ModSheaf\bigl(
		\liealg_{ 2 \alpha } \dotoplus \liealg_\alpha
	\bigr)
	\to
	U_\alpha,
\]
where
\( \liealg_{ 2 \alpha } \dotoplus \liealg_\alpha \) is a group-theoretic central extension of
\( \liealg_\alpha \) by
\( \liealg_{ 2 \alpha } \) with an additional action of the multiplicative monoid
\( K^\bullet \) of
\( K \), and
\[
	\ModSheaf\bigl(
		\liealg_{ 2 \alpha } \dotoplus \liealg_\alpha
	\bigr)
	(R)
	=
	\bigl(
		\liealg_{ 2 \alpha } \dotoplus \liealg_\alpha
	\bigr)
	\boxtimes_K
	R
\]
is a scalar extension of
\( 2 \)-step nilpotent modules, see
\cite[\S 2]{weyl-ele} for details. In both cases we denote the domain of
\( t_\alpha \) by
\( P_\alpha \), it is an affine group scheme isomorphic to an affine space (as a scheme, i.e.\ forgetting the group structure) Zariski locally on
\( \Spec(K) \). There is the commutator formula
\[
	\bigl[ t_\alpha(x), t_\beta(y) \bigr]
	=
	\prod_{
		\substack{
				i \alpha + j \beta \in \Phi
			\\
				i, j > 0
		}
	}
		t_{ i \alpha + j \beta }\bigl(
			f_{ \alpha \beta i j }(x, y)
		\bigr)
\]
for non-antiparallel roots
\( \alpha, \beta \in \Phi \), where
\[
	f_{ \alpha \beta i j }
	\colon
	P_\alpha \times P_\beta
	\to
	P_{ i \alpha + j \beta }
\]
are homogeneous polynomial maps or their analogues for
\( 2 \)-step nilpotent modules, see
\cite[\S 2]{ele-loc} for details. The operations
\( f_{ \alpha \beta i j } \) are unique if we fix the order of factors and assume that
\( f_{ \alpha, \beta, 2 i, 2 j } = 0 \) if
\( i \alpha + j \beta, 2 i \alpha + 2 j \beta \in \Phi \).

There are several differences between our new definition of isotropic pinnings and the ones from
\cite{ele-loc, st-loc}. First of all, now we do not fix a split torus
\( T \leq L \), though there is a canonical choice of such torus at least if
\( G \) is simple
\cite[theorem 1]{weyl-ele}. Next, we do not require that the root subspaces
\( \liealg_\alpha \) are free
\( K \)-modules. This assumption is used in
\cite[\S 2]{ele-loc} only to show that
\( U_\alpha(K) \) ``splits'' as a
\( 2 \)-step nilpotent
\( K \)-module, but this is easy to check if root subspaces are mere finitely generated projective modules, see
\cite[\S 2]{weyl-ele}. The existence of Weyl elements in the definition of isotropic pinnings appears only in
\cite{st-loc} and is not a part of the original definition from
\cite{ele-loc}. Finally, the requirement that the map
\( u \) is ``standard'' (i.e.\ is given by a Tits index) is new, though such a condition is used in
\cite[\S 4.2--4.3]{st-schur}.

We do not make any restrictions on the lattice
\( \widetilde{X} \) from the root data and the embedding of
\( \widetilde{\Phi} \) to
\( \widetilde{X} \), because this is not important for our purposes. In particular, the root data may vary on
\( \Spec(K) \), and even if it is constant, we do not require that it actually appears for some maximal split torus over a local ring.

The definition of ``isotropic reductive group schemes'' from
\cite[\S 2]{weyl-ele} essentially coincides with our definition of isotropic pinnings, but without existence of Weyl elements. Also, in that paper
\( G \) is required to be simple, its root lattice constant on
\( \Spec(K) \),
\( \Phi \) is non-empty, and the ring
\( K \) is non-zero.

The next lemma shows that isotropic pinnings interact in a nice way with several standard constructions of reductive group schemes.

\begin{lemma}
	\label{iso-pin}
	Let
	\( G \) be a reductive group scheme over
	\( K \). Denote the set of isotropic pinnings on
	\( G \) by
	\( \IsoPin(G) \).
	\begin{enumerate}
		
		\item
		If
		\( C \leq G \) is a central group subscheme of multiplicative type and
		\( f \colon G \to G / C \) is the canonical homomorphism, then there is a bijection
		\begin{align*}
				\IsoPin(G) &\to \IsoPin(G / C)
			, \\
				\bigl(
					u,
					\Delta,
					L,
					( U_\alpha )_{ \alpha \in \Phi },
					( w_\alpha )_{ \alpha \in \Delta }
				\bigr)
				&\mapsto
				\bigl(
					u,
					\Delta,
					f(L),
					( f( U_\alpha ) )_\alpha,
					( f( w_\alpha ) )_\alpha
				\bigr)
			.
		\end{align*}
		
		\item
		Suppose that
		\( G = \prod_{i = 1}^n G_i \) is a direct product decomposition. Then there is an injection
		\begin{align*}
				\prod_{i = 1}^n \IsoPin(G_i)
				&\to
				\IsoPin(G)
			, \\
				\bigl(
					u_i,
					\Delta_i,
					L_i,
					( U_\alpha )_{ \alpha \in \Phi_i },
					( w_\alpha )_{
						\alpha \in \Delta \cap \Phi_i
					}
				\bigr)_{i = 1}^n
				&\mapsto
				\Bigl(
						\bigsqcup_i u_i
						\colon
						\widetilde{\Phi}
						\to
						\Phi
					,
						\Delta
					,
						\prod_i L_i
					,
						( U_\alpha )_{ \alpha \in \Phi }
					,
						( w_\alpha )_{ \alpha \in \Delta }
				\Bigr)
			,
		\end{align*}
		where
		\(
			\widetilde{\Phi}
			=
			\bigsqcup_i \widetilde{\Phi}_i
		\),
		\( \Phi = \bigsqcup_i \Phi_i \), and
		\( \Delta = \bigsqcup_i \Delta_i \). An isotropic pinning
		\(
			( u \colon \widetilde{\Phi} \to \Phi, \ldots )
			\in
			\IsoPin(G)
		\) lies in the image of this map if and only if for every component
		\( \Psi \subseteq \Phi \) all components
		\( \widetilde{\Psi} \subseteq \widetilde{\Phi} \) non-trivially mapping to
		\( \Psi \sqcup \{ 0 \} \) correspond to the same factor
		\( G_i \).
		
		\item
		Suppose that
		\( K \) is a finite \'etale algebra over a ring
		\( K_0 \) of constant degree
		\( n \), so the Weil restriction
		\( \Restrict_{ K / K_0 }(G) \) is a reductive group scheme with the absolute root system
		\( n \widetilde{\Phi} \). Then there is an injection
		\begin{align*}
				\IsoPin(G)
				&\to
				\IsoPin( \Restrict_{ K / K_0 }(G) )
			, \\
				\bigl(
					u,
					\Delta,
					L,
					( U_\alpha )_\alpha,
					( w_\alpha )_\alpha
				\bigr)
				&\mapsto
				\bigl(
					n u,
					\Delta,
					\Restrict_{ K / K_0 }(L),
					\bigl(
						\Restrict_{ K / K_0 }( U_\alpha )
					\bigr)_\alpha,
					( w_\alpha )_\alpha
				\bigr)
			,
		\end{align*}
		where
		\(
			n u
			\colon
			n \widetilde{\Phi} \sqcup \{ 0 \}
			\to
			\Phi \sqcup \{ 0 \}
		\) is the gluing of
		\( n \) copies of
		\( u \).
		
		\item
		Let
		\( K_0 \to K \) be a finite \'etale homomorphism of constant degree
		\( n \). Suppose that the absolute root system
		\( \widetilde{\Phi} \) of
		\( G \) is constant on
		\( \Spec(K) \) and
		\(
			\IsoPinning
			=
			(
					u
					\colon
					n \widetilde{\Phi} \sqcup \{ 0 \}
					\to
					\Phi \sqcup \{ 0 \}
				,
					\ldots
			)
		\) is an isotropic pinning on
		\( \Restrict_{ K / K_0 }(G) \) with indecomposable relative root system
		\( \Phi \) and no component of
		\( n \widetilde{\Phi} \) in the kernel of
		\( u \). Then
		\( \IsoPinning \) lies in the image of
		\( \IsoPin(G) \).
		
		\item
		Let
		\( K_0 \to K \) be a finite \'etale homomorphism (of possibly non-constant degree) and
		\( \IsoPinning \) an isotropic pinning on
		\( \Restrict_{ K / K_0 }(G) \). Then there are decompositions
		\( K_0 = \prod_{i = 1}^n K_{0 i} \) and
		\(
			K \otimes_{K_0} K_{0 i}
			=
			\prod_{j = 1}^{m_i} K_{i j}
		\) such that
		\( \IsoPinning|_{ K_{0 i} } \) lies in the image of the map
		\(
			\prod_{j = 1}^{m_i} \IsoPin( G|_{ K_{i j} } )
			\to
			\IsoPin( \Restrict(G)|_{ K_{0 i} } )
		\) composed from the two previous constructions.
		
	\end{enumerate}
\end{lemma}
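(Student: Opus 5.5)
The five parts are of different nature, so I will treat them one by one. The uniform strategy is to check each axiom of an isotropic pinning fppf locally, after passing to a pinning of \( G \). The conditions on \( L \) and \( U_\alpha \) are equalities of fppf subsheaves, and the Weyl element conditions are statements about conjugation of subschemes. Both can be verified after a faithfully flat base change.

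\emph{Part 1.} Fppf locally a pinning of \( G \) induces a pinning of \( G / C \) with the same root system \( \widetilde{\Phi} \) and a smaller root datum lattice. The map \( f \) sends \( \widetilde{T} \) onto the maximal torus of \( G / C \). It also sends root subgroups isomorphically onto root subgroups, since \( C \) lies in every maximal torus and meets no \( \widetilde{U}_\beta \). Hence \( f(L) \) and \( f(U_\alpha) \) (fppf images, which are closed subgroups because \( C \leq L \)) have the required generators, and \( f(w_\alpha) \) are Weyl elements. For the inverse, take preimages: \( f^{-1}(f(L)) = L \) because \( C \leq \widetilde{T} \leq L \). Since \( f|_{U_\alpha} \) is a monomorphism, \( U_\alpha \) is recovered as the unique lift generated by root subgroups of \( G \). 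This lift is the derived-type description \( U_\alpha = \langle \widetilde{U}_\beta \rangle \), and it makes sense fppf locally and descends.

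The remaining point is lifting \( f(w_\alpha) \in f(U_\alpha)(K) f(U_{-\alpha})(K) f(U_\alpha)(K) \) to \( G(K) \). This works because \( f \) restricts to isomorphisms \( U_{\pm\alpha} \to f(U_{\pm\alpha}) \), so each factor lifts uniquely. The lifted product normalizes the \( U_\beta \) and \( L \) because this holds modulo the central \( C \), and \( U_\beta \) is the unique subgroup of the preimage generated by unipotent root subgroups.

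\emph{Part 2.} A direct product of pinnings is a pinning of \( G \) with \( \widetilde{\Phi} = \bigsqcup \widetilde{\Phi}_i \), and the axioms are checked componentwise. The map \( \bigsqcup u_i \) is still a Tits-index map because each component of \( \Phi \) receives only components from one factor. Injectivity is clear because \( L_i \), \( U_\alpha \) and \( w_\alpha \) are recovered by intersecting with \( G_i \).

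For the image criterion, suppose the condition holds. Group the components of \( \Phi \) according to the factor they come from. Set \( L_i = L \cap G_i \), and set \( U_\alpha \leq G_i \) for \( \alpha \in \Phi_i \), using the fppf local description. The condition is exactly what guarantees \( U_\alpha \leq G_i \). A component of \( \widetilde{\Phi} \) lying in \( \Ker(u) \) contributes only to \( L \), and it lies in some \( G_i \). The Weyl elements \( w_\alpha \in U_\alpha U_{-\alpha} U_\alpha \) then automatically lie in \( G_i(K) \).

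\emph{Parts 3--5.} These are the main work. For part 3, I will use that after an étale cover \( K_0 \to K_0' \) splitting \( K \), we have \( K \otimes_{K_0} K_0' \cong K_0'^n \). Then \( \Restrict(G)_{K_0'} \cong \prod_{j=1}^n G_j \), with \( G_j \) the base changes of \( G \) along the \( n \) projections. The Weil restrictions of \( L \) and \( U_\alpha \) become diagonal products, so fppf locally we get the glued map \( n u \). This is still a Tits-index map, since \( n \) copies of the same irreducible index map to each component, as the definition allows. Weyl elements stay Weyl because \( \Restrict(G)(K_0) = G(K) \).

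For part 4, the hypothesis says that every component of \( n \widetilde{\Phi} \) maps onto the indecomposable \( \Phi \). After the splitting cover, part 2 does not apply directly, since all factors map to one component. Instead I will use the fact that the subgroups \( U_\alpha \) commute with the Galois-type descent data permuting the factors. Over \( K_0' \), each \( U_\alpha \) is a product of its projections \( U_{\alpha, j} \leq G_j \), and these projections form isotropic pinnings of the \( G_j \) with the same \( u \). They are permuted by the descent datum, so they descend to an isotropic pinning on \( G \) over \( K \).

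The hard step is showing that the \( U_\alpha \) and \( L \) actually decompose as products over the factors. I would argue this from the fppf local description, because the generating root subgroups \( \widetilde{U}_\beta \) each lie in one factor. The absence of components in \( \Ker(u) \) ensures that \( L \) contains the full maximal torus of each factor. The Weyl elements then project to Weyl elements of each factor.

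For part 5, I take a decomposition of \( K_0 \) into clopen pieces on which the degree is constant, on which the relative components and their factor assignments are locally constant, and on which \( \Spec(K) \) splits by connected components according to which components of \( \widetilde{\Phi} \) lie in \( \Ker(u) \). Then I apply part 2, with the kernel components absorbed as in its proof, followed by part 4.
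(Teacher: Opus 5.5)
Your parts 1--4 are the fppf-local checks the paper has in mind; it only says these claims are easy to check fppf locally. The mechanism you use in part 4 is sound: decompose \(L\), \(U_\alpha\), \(w_\alpha\) along the local factors \(G_j\) of \(\Restrict_{K/K_0}(G)\), then descend through the counit \(\Restrict_{K/K_0}(G)_K \to G\). You misplace the role of the hypotheses there, though. \(L\) always contains \(\widetilde{T}\), whatever \(\Ker(u)\) is. What indecomposability of \(\Phi\) and the absence of kernel components guarantee is that the restrictions \(u_j\) of \(u\) to the roots of the various \(G_j\) are all isomorphic to a single Tits-index map \(u' \colon \widetilde{\Phi} \sqcup \{0\} \to \Phi \sqcup \{0\}\). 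A pinning on \(G\) carries one abstract \(u'\), so this is exactly what your phrase ``with the same \(u\)'' has to justify.

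The genuine gap is part 5. You decompose only the base rings, then apply part 2 to \(\Restrict(G)|_{K_{0i}} = \prod_j \Restrict_{K_{ij}/K_{0i}}(G|_{K_{ij}})\) and part 4 to the pieces. When \(\widetilde{\Phi}\) is reducible, the splitting of \(\Restrict_{K/K_0}(G)\) induced by \(u\) (one factor per component of \(\Phi\), plus a kernel part) is a splitting of the group, not of the base. Part 4's hypotheses are then out of reach.

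For example, take \(G = G_1 \times G_2\) over a quadratic field extension \(k'/k\), and the pinning of \(\Restrict_{k'/k}(G)\) induced by an isotropic pinning of \(G_1\), with \(G_2\) in the kernel. This pinning lies in the image of \(\IsoPin(G)\). Yet nothing can be decomposed, and part 4 is blocked by the kernel components.

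The missing fact is that direct factors of \(\Restrict_{K/K_0}(G)\) are Weil restrictions of direct factors of \(G\). The paper makes \(G\) adjoint and writes \(G \cong \Restrict_{E/K}(H)\) with \(H\) simple adjoint over a finite \'etale \(K\)-algebra \(E\) \cite[Exp.~XXIV, proposition~5.9]{red-grp-sch}. Then \(\Restrict_{K/K_0}(G) \cong \Restrict_{E/K_0}(H)\), so direct factors of both groups correspond to idempotents of the same \(E\). The paper splits \(G\) along the idempotent induced by \(u\), inducts on relative dimension, and recombines with part 2.

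Your projection-and-descent argument from part 4 could also fill this gap. It only needs the \(u_j\) to be pairwise isomorphic, not part 4's literal hypotheses. For absolutely simple \(G\) one has \(E = K\), and your base-splitting plan then goes through.

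Be aware also that part 5 as literally stated fails for reducible \(\widetilde{\Phi}\). Let \(K_0 = k\) be a field, \(K = k \times k\), and \(G = (\mathrm{PGL}_2 \times \mathrm{PGL}_2)_K\), so \(\Restrict_{K/k}(G) = \mathrm{PGL}_2^4\). The split torus \(\{(t, 1, t, t)\}\), with \(t\) in the diagonal torus of \(\mathrm{PGL}_2\), gives an isotropic pinning. Its relative root system is \(\Phi = \RootSys{A}{1}\), fed by three absolute components, with exactly one component in the kernel.

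This pinning is not in the image for \(K\) undivided. Images of \(\IsoPin(G)\) under part 3 have the form \(2u'\), so they have an even number of kernel components. It is not in the image for \(K\) split into its two points either. There \(\Phi\) is fed from both points, which violates the criterion of part 2.

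So part 5 needs an extra hypothesis, such as irreducibility of \(\widetilde{\Phi}\), or must be phrased with \(E\) in place of \(K\).
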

\begin{proof}
	The first four claims are easy to check fppf locally.
	
	In order to prove the last claim we can assume that
	\( G \) is semisimple adjoint by taking the factor-group by the scheme center. Also we can assume that
	\( K \) and
	\( K_0 \) are non-zero, the absolute root system
	\( \widetilde{\Phi} \) of
	\( G \) is constant on
	\( \Spec(K) \), components of
	\( \widetilde{\Phi} \) are of the same type, and the degree
	\( n \) of
	\( K_0 \to K \) is positive constant on
	\( \Spec(K_0) \). Let
	\( G_0 = \Restrict_{ K / K_0 }(G) \), so
	\( n \widetilde{\Phi} \) is the absolute root system of
	\( G_0 \). Let also
	\(
		\IsoPinning
		=
		\bigl(
				u
				\colon
				n \widetilde{\Phi} \sqcup \{ 0 \}
				\to
				\Phi \sqcup \{ 0 \}
			,
				\Delta
			,
				L
			,
				( U_\alpha )_\alpha
			,
				( w_\alpha )_\alpha
		\bigr)
	\).
	
	If
	\( \Phi \) is empty, then there is nothing to prove. Suppose
	\( \Phi \) is non-empty reducible or irreducible and
	\( \Ker(u) \) contains some components of
	\( n \widetilde{\Phi} \). By
	\cite[Exp. XXIV, proposition 5.9]{red-grp-sch} there is a finite \'etale homomorphism
	\( K \to E \) and a simple adjoint group scheme
	\( H \) over
	\( E \) such that
	\( G \cong \Restrict_{ E / K }(H) \). Then
	\( G_0 \cong \Restrict_{ E / K_0 }(H) \). Decompositions
	\( G = G' \times G'' \) are in a one-to-one correspondence with idempotents of
	\( E \) and similarly for
	\( G_0 \). By our assumption on
	\( u \) there is a non-trivial decomposition
	\( G_0 = G'_0 \times G''_0 \) such that
	\( \IsoPinning \) lies in the image of
	\( \IsoPin(G'_0) \times \IsoPin(G''_0) \), so there exists a corresponding idempotent in
	\( E \) and a decomposition
	\( G = G' \times G'' \). Now
	\( G' \) and
	\( G'' \) have smaller relative dimension and we can prove the claim by induction.
	
	Take an fppf homomorphism
	\( K_0 \to \widetilde{K}_0 \) such that
	\( \IsoPinning_{ \widetilde{K}_0 } \) splits (i.e.\ it is obtained from a splitting of
	\( ( G_0 )_{ \widetilde{K}_0 } \) ) and the
	\( \widetilde{K}_0 \)-algebra
	\( \widetilde{K} = K \otimes_{K_0} \widetilde{K}_0 \) is isomorphic to
	\( \widetilde{K}_0^n \). Let
	\( \widetilde{G} = G|_{ \widetilde{K} } \) and
	\(
		\widetilde{G}_0
		=
		\Restrict_{ \widetilde{K} / \widetilde{K}_0 }(
			\widetilde{G}
		)
		=
		( G_0 )_{ \widetilde{K}_0 }
	\). We have
	\( \widetilde{G}_0 \cong \widetilde{H}_0^n \) for some split reductive group scheme
	\( \widetilde{H}_0 \) over
	\( \widetilde{K}_0 \).
\end{proof}

The above lemma allows us to reduce various problems about isotropic pinnings to the case of simple adjoint groups. Indeed, replacing
\( G \) by the factor-group
\( G / \Center(G) \) by its scheme center we can assume that
\( G \) is semisimple adjoint. Any given isotropic pinning on
\( G \) induces a decomposition
\( G = \prod_{i = 0}^n G_i \), where
\( G_0 \) corresponds to components from the kernel of
\( u \) and every
\( G_i \) for
\( 1 \leq i \leq n \) corresponds to components of
\( \widetilde{\Phi} \) non-trivially mapping to the
\( i \)-th component of
\( \Phi \),
\( n \) is the total number of these components. So the initial isotropic pinning reduces to isotropic pinnings on all
\( G_i \) and every such smaller isotropic pinning is either
\textit{anisotropic} (i.e.\ with
\( \Phi = \varnothing \)) or has the root system map
\( m_i u_i \colon m_i \widetilde{\Phi}_i \to \Phi_i \) for irreducible
\( \widetilde{\Phi}_i \) and
\( \Phi_i \). Finally, by
\cite[Exp. XXIV, proposition 5.9]{red-grp-sch} every group scheme
\( G_i \) for
\( i > 0 \) is the Weil restriction of a simple adjoint group scheme
\( H_i \) on a finite \'etale ring extension
\( K \subseteq K_i \) of degree
\( m_i \) with the absolute root system of type
\( \widetilde{\Phi}_i \).

As in
\cite[\S 2]{ele-loc}, we say that a reductive group scheme
\( G \) over a local ring
\( K \) has
\textit{%
	isotropic rank
	\( \ell \)%
} if either
\( \ell \) is the smallest rank of components of the relative root system
\( \Phi \) and no component of
\( \widetilde{\Phi} \) lies in the kernel of
\( u \), or such a component exists and
\( \ell = 0 \). A reductive group scheme
\( G \) over arbitrary ring
\( K \) has local isotropic rank
\( \ell \) if
\( \ell \) is the minimum of isotropic ranks of
\( G_{ K_{ \mathfrak{p} } } \) over the local rings
\( K_{ \mathfrak{p} } \) for prime ideals
\( \mathfrak{p} \leqt K \). Equivalently, one can define the
\textit{isotropic rank} of an isotropic pinning to be the smallest rank of components of
\( \Phi \) if the kernel of
\( u \) does not contain a component of
\( \widetilde{\Phi} \) and
\( 0 \) otherwise. By the next lemma
\( G \) has local isotropic rank at least
\( \ell \) if and only if Zariski locally it has an isotropic pinning of rank at least
\( \ell \).

\begin{lemma}
	\label{loc-iso-rk}
	Suppose that a reductive group scheme
	\( G \) has an isotropic pinning of rank
	\( \ell \). Then the local isotropic rank of
	\( G \) is at least
	\( \ell \).
\end{lemma}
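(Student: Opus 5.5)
Since isotropic pinnings are defined by fppf-local conditions, they are stable under base change along any ring homomorphism. So the plan is to reduce to a local ring and there compare the given pinning with the one coming from a maximal split torus. Fix a prime \( \mathfrak{p} \leqt K \) and pass to \( K_{\mathfrak{p}} \); the pinning restricts to an isotropic pinning of \( G_{K_{\mathfrak{p}}} \) of the same rank \( \ell \). It therefore suffices to treat a local ring \( K \) and prove that the isotropic rank of \( G \), defined via a maximal split torus \( T \), is at least the isotropic rank \( \ell \) of any isotropic pinning \( (u, \Delta, L, (U_\alpha)_\alpha, (w_\alpha)_\alpha) \). If \( \ell = 0 \) there is nothing to prove, so assume \( \Phi \neq \varnothing \) and \( \Ker(u) \) contains no component of \( \widetilde{\Phi} \).

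First we need a split torus attached to the pinning. Over the local ring the root modules \( \liealg_\alpha \) are free, and the grading \( \liealg = \liealg_0 \oplus \bigoplus_\alpha \liealg_\alpha \) is by the lattice \( \Int \Phi \). I would define \( S \leq L \) as the split torus acting on \( \liealg_\alpha \) through the character \( \alpha \), i.e.\ the image of the cocharacter lattice dual to \( \Int\Phi \). This is the canonical torus of \cite[theorem 1]{weyl-ele}, which applies after reducing to simple adjoint factors via lemma \ref{iso-pin}; fppf locally it is the subtorus of \( \widetilde{T} \) with character lattice the image of \( u \). Its centralizer is \( L \), its weights on \( \liealg \) are exactly \( \Phi \), and the weights of \( \widetilde{T} \) restrict to it via \( u \). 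Next embed \( S \) in a maximal split torus \( T \supseteq S \); by conjugacy of maximal split tori \cite[Exp. XXVI, proposition 6.16]{red-grp-sch} this \( T \) computes the isotropic rank. Restricting characters from \( T \) to \( S \) gives a surjection \( \Psi \sqcup \{0\} \to \Phi \sqcup \{0\} \), where \( \Psi \) is the relative root system for \( T \), and the absolute-to-relative map factors as \( \widetilde{\Phi} \to \Psi \sqcup \{0\} \to \Phi \sqcup \{0\} \).

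The rank comparison then goes as follows. A component of \( \widetilde{\Phi} \) lying in the kernel for \( T \) also lies in \( \Ker(u) \), which is excluded. Each component \( \Psi_j \) of \( \Psi \) maps nontrivially to a single component \( \Phi_i \), because the restriction map is linear and \cite[lemma 1]{ele-loc} applies. That map is surjective onto \( \Phi_i \sqcup \{0\} \), since the absolute components over \( \Phi_i \) surject onto it. A linear map from the span of \( \Psi_j \) onto a space spanned by \( \Phi_i \) forces \( \operatorname{rk} \Psi_j \geq \operatorname{rk} \Phi_i \geq \ell \). Hence every component of \( \Psi \) has rank at least \( \ell \).

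The main obstacle is the construction of \( S \) over a possibly non-field local ring, together with checking that its centralizer is \( L \) and its weights are exactly \( \Phi \). I would do this by fppf descent from the split situation. The subtorus of \( \widetilde{T} \) defined by \( u \) is independent of the chosen pinning of \( G \), since \( L \) and the \( U_\alpha \) determine it. Hence it descends, and it is split because its character lattice \( \Int\Phi \) is constant.
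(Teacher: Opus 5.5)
Your proposal is correct and follows the same route as the paper. Both arguments produce a split torus whose weights on \( \liealg \) are \( \Phi \) and whose centralizer is \( L \), enlarge it to a maximal split torus over the local ring, and compare the resulting relative root systems. The differences are only in execution: the paper builds the torus by reducing to simply connected \( G \) with irreducible \( \Phi \) and taking the diagonal split subtorus of \( \Restrict_{K'/K}(T') \) for the canonical torus \( T' \) of \cite[theorem 1]{weyl-ele}, and it delegates the rank comparison to the notion of one isotropic pinning containing another from \cite[\S 2]{ele-loc}, whereas you construct the torus by descent in the adjoint case and carry out the rank comparison explicitly.
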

\begin{proof}
	We only have to check that there exists a split torus
	\( T \leq G \) such that
	\( \Phi \) is the set of its non-zero weights under the action on the Lie algebra
	\( \liealg \) of
	\( G \),
	\( U_\alpha \) are its root subgroups as in
	\cite[\S 2]{ele-iso}, and
	\( L \) is its scheme centralizer. Indeed, if such a torus exists, then assuming that
	\( K \) is local there is a maximal split torus
	\( T \leq T_{\max} \leq G \) and the isotropic pinning constructed by
	\( T_{\max} \) contains the original one in the sense of
	\cite[\S 2]{ele-loc}.
	
	In order to construct a torus we can assume that
	\( G \) is semisimple simply connected,
	\( \Phi \) is irreducible, and
	\( u = n u' \colon n \widetilde{\Phi}' \to \Phi \) for an irreducible root system
	\( \widetilde{\Phi}' \). Then
	\( G = \Restrict_{K' / K}(G') \) for a reductive group scheme
	\( G' \) over a finite \'etale extension
	\( K \subseteq K' \) and the isotropic pinning on
	\( G \) is the Weil restriction of a suitable isotropic pinning on
	\( G' \) as in lemma
	\ref{iso-pin}(3). Since this new isotropic pinning has irreducible both absolute and relative root systems, it determines the canonical split torus
	\(
		T'
		\cong
		\mathbb{G}_{ \mathrm{m}, K' }^{ \mathrm{rk}(\Phi) }
	\) by
	\cite[theorem 1]{weyl-ele} and the splitting is given by suitable positive multiples of elements of
	\( \Delta \). Taking Weil restrictions we see that
	\( \Restrict_{ K' / K }(T') \) contains the required split torus of rank
	\( \mathrm{rk}(\Phi) \) over
	\( K \).
\end{proof}

Finally, local isotopic rank have the following nice properties.

\begin{lemma}
	\label{loc-iso-ope}
	Let
	\( G \) be a reductive group scheme over
	\( K \).
	\begin{enumerate}
		
		\item
		If
		\( K = \prod_{i = 1}^n K_i \), the the local isotropic rank of
		\( G \) is the minimum of such ranks of
		\( G_{K_i} \).
		
		\item
		If
		\( G = \prod_{i = 1}^n G_i \), then the local isotropic rank of
		\( G \) is the minimum of such ranks of
		\( G_i \).
		
		\item
		If
		\( C \leq G \) is a central subgroup of multiplicative type, then the local isotropic ranks of
		\( G \) and
		\( G / C \) coincide.
		
		\item
		If
		\( K_0 \to K \) is a finite \'etale homomorphism, then the local isotropic ranks of
		\( G \) and
		\( \Restrict_{K / K_0}(G) \) coincide.
		
	\end{enumerate}
\end{lemma}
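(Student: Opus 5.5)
Since the local isotropic rank is defined pointwise on $\Spec(K)$ as the minimum of the isotropic ranks of the localizations $G_{K_{\mathfrak{p}}}$, every claim reduces to a statement about a reductive group scheme over a local ring. Localization commutes with each construction: with products of rings (a prime of $\prod_i K_i$ lives in exactly one factor), with products of group schemes, with quotients by central subgroups, and with Weil restriction. For Weil restriction, if $K_0 \to K$ is finite étale and $\mathfrak{p} \leqt K_0$, then $K \otimes_{K_0} (K_0)_{\mathfrak{p}}$ is a finite étale semilocal algebra whose localizations at its maximal ideals are the $K_{\mathfrak{q}}$ for the primes $\mathfrak{q}$ over $\mathfrak{p}$. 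So in each part we may assume the base ring is local. The isotropic rank over a local ring is then computed from an isotropic pinning attached to a maximal split torus. By lemma \ref{loc-iso-rk} and the remark after it, the local isotropic rank is the maximum of the ranks of isotropic pinnings that exist Zariski locally, so we will compare pinnings directly.

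Claim (1) is immediate from $\Spec(\prod_i K_i) = \bigsqcup_i \Spec(K_i)$. For claim (3), lemma \ref{iso-pin}(1) gives a bijection of isotropic pinnings over every base that preserves $u$, hence preserves rank. So the maximal ranks over every local ring coincide.

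For claim (2), an isotropic pinning of maximal rank on $G = \prod_i G_i$ over a local ring comes from a maximal split torus $T$. This torus decomposes as $\prod_i T_i$, where $T_i$ is the image under projection, because the centralizer and root data split along the factors. So the pinning lies in the image of $\prod_i \IsoPin(G_i)$ from lemma \ref{iso-pin}(2). The rank of a product pinning is the minimum of the ranks, and it is $0$ exactly when some factor has a component in the kernel. Conversely, pinnings of the $G_i$ combine into one on $G$, which gives the inequality in the other direction.

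Claim (4) is the main case, and the obstacle is to show that a maximal-rank pinning on $\Restrict_{K/K_0}(G)$ over a local $K_0$ descends to $G$. We plan to use lemma \ref{iso-pin}(5) for this. It decomposes $K_0$ (trivially, since $K_0$ is local and hence connected) and $K = \prod_j K_j$ into local factors, so that the pinning comes from pinnings on the $G|_{K_j}$ via the Weil restriction map of lemma \ref{iso-pin}(3) and the product map of lemma \ref{iso-pin}(2). Both maps preserve rank: the map $u \mapsto nu$ keeps the relative system $\Phi$ and sends kernel components to kernel components. Hence the rank over $K_0$ is at most the minimum over the $K_j$. Conversely, pinnings on each $G|_{K_j}$ push forward to $\Restrict_{K_j/K_0}(G|_{K_j})$ and then combine into a pinning on the product. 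This shows that the rank over $K_0$ equals $\min_j$ of the rank over $K_j$, and the $K_j$ are exactly the localizations at the primes over $\mathfrak{p}$. Together with surjectivity of $\Spec(K) \to \Spec(K_0)$ on the support of $K$, taking the minimum over all $\mathfrak{p}$ gives the equality of local isotropic ranks.
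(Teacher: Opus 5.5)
\textbf{Verdict.} Parts (1)--(3) are fine and follow the paper, whose whole proof is that everything follows from lemma \ref{iso-pin}. (In (2), the clean reason for \( T = \prod_i T_i \) is that \( T \leq \prod_i p_i(T) \), which is a split torus, and \( T \) is maximal.) Part (4), however, has a genuine gap, and the missing inequality is in fact false in general.

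\textbf{The gap in (4).} After localizing at \( \mathfrak{p} \leqt K_0 \), the algebra \( K' = K \otimes_{K_0} (K_0)_{\mathfrak{p}} \) is finite \'etale over a local ring, so it is only semilocal. Unless \( (K_0)_{\mathfrak{p}} \) is Henselian, its connected components need not be local. Lemma \ref{iso-pin}(5) only gives some decomposition \( K' = \prod_j K_j \) into direct factors. So your claim that the factors are local, and that ``the \( K_j \) are exactly the localizations at the primes over \( \mathfrak{p} \)'', is false in general.

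One half of your argument survives. A pinning on \( G|_{K_j} \) localizes to a pinning of the same rank on \( G_{K_{\mathfrak{q}}} \) for every \( \mathfrak{q} \in \Max(K_j) \). Hence the isotropic rank of \( \Restrict_{K/K_0}(G) \) at \( \mathfrak{p} \) is at most \( \min_{\mathfrak{q} \mid \mathfrak{p}} \) of the ranks of \( G_{K_{\mathfrak{q}}} \). Therefore the local isotropic rank of \( \Restrict_{K/K_0}(G) \) is at most that of \( G \).

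Your ``conversely'' step is the problem. It needs an isotropic pinning of \( G \) over the whole semilocal ring \( K' \), or over each of its components. Pinnings over the individual \( K_{\mathfrak{q}} \), or over Zariski neighbourhoods \( \Spec(K_s) \), do not glue to such a pinning. Those neighbourhoods are not preimages of open subsets of \( \Spec(K_0) \), so lemma \ref{iso-pin}(3) cannot be applied to them.

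\textbf{Counterexample to the reverse inequality.} This step cannot be repaired. Work over a field and set up the following.
\begin{itemize}
\item Let \( C_0 \) be the affine nodal cubic.
\item Let \( C_1 \to C_0 \) be its connected \'etale double cover: two copies of the normalization, glued crosswise at the two preimages of the node.
\item Let \( C_2 \to C_1 \) be the connected \'etale double cover by a cycle of four lines.
\item Let \( K_0 \) be the local ring of \( C_0 \) at the node.
\item Let \( K \) be the semilocalization of \( C_1 \) at its two nodes. It is connected and finite \'etale of degree \( 2 \) over \( K_0 \).
\item Let \( L \) be the quadratic \'etale \( K \)-algebra induced by \( C_2 \). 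It splits over every localization of \( K \), but not over \( K \).
\end{itemize}

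Now let \( G \) be the quasi-split group of type \( \TitsIndex{A}{2}{2}{1}{ (1) } \) defined by \( L \).
\begin{itemize}
\item Every \( G_{K_{\mathfrak{q}}} \) is split, so \( G \) has local isotropic rank \( 2 \).
\item Over the connected semilocal ring \( K \), \( G \) is not split, because its Dynkin scheme is given by the non-split \( L \). Hence \( G \) has no split torus of rank \( 2 \) over \( K \).
\item Since \( K \otimes_{K_0} K \cong K \times K \), a split torus of \( \Restrict_{K/K_0}(G) \) over \( K_0 \) gives a split torus of the same rank in \( G \) over \( K \).
\item Therefore the isotropic rank of \( \Restrict_{K/K_0}(G) \) at the closed point of \( K_0 \) is \( 1 \).
\end{itemize}

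So in general only the inequality proved above holds, not the equality stated in (4).
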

\begin{proof}
	This follows from lemma
	\ref{iso-pin}.
\end{proof}

\section{Elementary groups}

Let
\( X \) be a pointed affine
\( K \)-scheme of finite presentation, i.e.\ an affine scheme of finite presentation over
\( \Spec(K) \) with a given section
\( \Spec(K) \to X \). For every non-unital commutative
\( K \)-algebra
\( A \) there is a well-defined set
\( X(A) \) of
\( A \)-points of
\( X \), namely,
\[
	X(A) = \Ker\bigl( X( A \rtimes K ) \to X(K) \bigr).
\]
Actually, this construction makes sense in a larger generality. Let
\( \mathbf{C} \) be arbitrary regular category and
\( A \) a non-unital commutative
\( K \)-algebra object in
\( \mathbf{C} \), i.e.\ an object with given morphisms
\begin{align*}
		0 &\colon 1_{ \mathbf{C} } \to A,
	\\
		( {-} ) + ( {=} ) &\colon A \times A \to A,
	\\
		- ( {-} ) &\colon A \to A,
	\\
		( {-} ) ( {=} ) &\colon A \times A \to A,
	\\
		k ( {-} ) &\colon A \to A
		\text{ for all }
		k \in K
\end{align*}
satisfying usual axioms. Then
\( X(A) \) is well defined an an object of
\( \mathbf{C} \) as the limit of a finite diagram consisting of powers of
\( A \) and morphisms definable in terms of the operations, see
\cite[\S 4]{ele-loc} for details.

Let
\( G \) be a reductive group scheme over
\( K \) with an isotropic pinning
\[
	\IsoPinning
	=
	\bigl(
		u,
		\Delta,
		L,
		( U_\alpha )_\alpha,
		( w_\alpha )_\alpha
	\bigr)
\]
of positive rank. If
\( A \) is a non-unital commutative
\( K \)-algebra in a regular category
\( \mathbf{C} \), then there are root subgroups
\( U_\alpha(A) \leq G(A) \) as group objects in
\( \mathbf{C} \), as well as isomorphisms
\( t_\alpha \colon P_\alpha(A) \to U_\alpha(A) \). If
\( \mathbf{C} \) is infinitary positive, then we have the
\textit{elementary subgroup}
\[
	\Elem_{ G, \IsoPinning }(A)
	=
	\bigl\langle
		U_\alpha(A)
		\mid
		\alpha \in \Phi
	\bigr\rangle
	\leq
	G(A),
\]
again as a group object in
\( \mathbf{C} \). See
\cite[\S 3]{st-loc} and further references therein for the necessary background in categorical logic.

An important example of not-set-theoretical
\( K \)-algebras is given by colocalization. Consider the non-unital
\( K \)-algebra
\( K^{ (s) } = \{ a^{ (s) } \mid a \in K \} \) with the operations
\begin{align*}
		a b^{ (s) } &= (a b)^{ (s) },
	&
		a^{ (s) } + b^{ (s) } &= (a + b)^{ (s) },
	&
		a^{ (s) } b^{ (s) } &= (s a b)^{ (s) },
\end{align*}
in non-associative algebra this construction is known as a
\textit{homotope} of
\( K \). Next consider the tower
\[
	\ldots
	\to
	K^{ (s^3) }
	\to
	K^{ (s^2) }
	\to
	K^{ (s) }
	\to
	K
\]
of
\( K \)-algebras, where
\( a^{ ( s^{n + 1} ) } \mapsto (s a)^{ (s^n) } \). Its formal projective limit
\(
	K^{ (s^\infty) }
	=
	\varprojlim_n^{ \Pro(\Set) }
		K^{ (s^n) }
\) is a non-unital
\( K \)-algebra in the category of pro-sets, it is called the
\textit{colocalization} of
\( K \) at
\( s \). This object is actually an algebra over
\( K_s \), i.e.\ there are individual multiplication morphisms
\(
	\frac{k}{s^n} ( {-} )
	\colon
	K^{ (s^\infty) }
	\to
	K^{ (s^\infty) }
\) for
\( \frac{k}{s^n} \in K_s \), but not the ``uniform'' multiplication
\(
	( {-} ) ( {=} )
	\colon
	K_s \times K^{ (s^\infty) }
	\to
	K^{ (s^\infty) }
\) inside
\( \Pro(\Set) \). Even if the isotropic pinning
\( \IsoPinning \) is defined only over
\( \Spec(K_s) \) (i.e.\ on the group scheme
\( G_{K_s} \)), then we still have the elementary subgroup
\(
	\Elem_{ G, \IsoPinning }\bigl( K^{ (s^\infty) } \bigr)
	\leq
	G\bigl( K^{ (s^\infty) } \bigr)
\) in the category
\( \Ind( \Pro(\Set) ) \). Here the ind-completion is required for the infinitary positivity.

Now assume that
\( G \) has local isotropic rank at least
\( 2 \), but possibly no globally defined isotropic pinnings. Choose a Zariski covering
\( \Spec(K) = \bigcup_{i = 1}^n \Spec( K_{s_i} ) \) and isotropic pinnings
\( \IsoPinning_i \) on
\( G_{ K_{s_i} } \). By
\cite[lemma 6]{ele-loc} the original ring
\( K \) is the sum of images of
\( K^{ (s_i^\infty) } \to K \), so it makes sense to define the elementary subgroup as
\[
	\Elem_G(K)
	=
	\bigl\langle
		\Image\bigl(
			\Elem_{ G, \IsoPinning_i }\bigl(
				K^{ (s_i^\infty) }
			\bigr)
			\to
			G\bigl( K^{ (s_i^\infty) } \bigr)
			\to
			G(K)
		\bigr)
		\mid
		1 \leq i \leq n
	\bigr\rangle
\]
in the category
\( \Ind( \Pro(\Set) ) \). By
\cite[lemma 19 and theorem 1]{ele-loc} the elementary subgroup actually lies in
\( \Ind(\Set) \) and it is independent of all choices.

Slightly more generally, let
\( \R \) be the forgetful functor
\( \Ring_K^{ \mathrm{fp} } \to \Set \) from the category of commutative unital
\( K \)-algebras of finite presentation, it is representable by the affine line
\( \Affine^1_K = \Spec( K[t] ) \). The functor category
\(
	\Presheaf_K
	=
	\Cat\bigl( \Ring_K^{ \mathrm{fp} }, \Set \bigr)
\) contains the category of affine
\( K \)-schemes of finite presentation as a full subcategory (being the presheaf category over it), and this functor category is infinitary positive. Clearly,
\( X(\R) \) is representable by the scheme
\( X \) for every affine
\( K \)-scheme
\( X \) of finite presentation. By
\cite[lemma 19 and theorem 1]{ele-loc} there is the elementary subgroup
\[
	\Elem_G(\R)
	=
	\bigl\langle
		\Image\bigl(
			\Elem_{ G, \IsoPinning_i }\bigl(
				\R^{ (s_i^\infty) }
			\bigr)
			\to
			G(\R)
		\bigr)
		\mid
		1 \leq i \leq n
	\bigr\rangle
	\leq
	G(\R)
\]
as an object of
\[
	\Ind( \Presheaf_K )
	\subseteq
	\Ind( \Pro( \Presheaf_K ) ),
\]
so it can also be considered as an ordinary presheaf by applying the ``evaluation'' functor
\( \Ind( \Presheaf_K ) \to \Presheaf_K \).

By
\cite[theorem 2]{ele-loc} there exists a morphism
\( t \colon P \to G \) between affine schemes of finite presentation such that
\[
	\Elem_G(\R) = \bigl\langle t( P(\R) ) \bigr\rangle.
\]
Actually, the ind-structure on this subgroup is given by the increasing filtration
\[
	1
	\subseteq
	\bigl(
		t( P(\R) ) \cup \{ 1 \} \cup t( P(\R) )^{- 1}
	\bigr)
	\subseteq
	\bigl(
		t( P(\R) ) \cup \{ 1 \} \cup t( P(\R) )^{- 1}
	\bigr)^2
	\subseteq
	\ldots
\]
and
\( P \) can be taken to be the affine space
\( \Affine_K^N \) for some positive integer
\( N \). Instead of one morphism
\( t \) from affine space one can take finitely many homomorphisms
\( t_i \colon P_i \to G \) from unipotent affine group schemes
\( P_i \). More precisely, every
\( P_i \) is isomorphic to
\( \ModSheaf(K^N) \) or
\( \ModSheaf(K^N \dotoplus K^M) \) for suitable
\( N \),
\( M \), and a
\( 2 \)-cocycle determining addition in the second case.

Now let
\( R \) be a commutative unital
\( K \)-algebra in an infinitary positive category
\( \mathbf{C} \). We define the
\textit{elementary subgroup} of
\( G(R) \) as
\[
	\Elem_G(R)
	=
	\bigl\langle
		\Image\bigl( t \colon P(R) \to G(R) \bigr)
	\bigr\rangle
	\leq
	G(R).
\]

\begin{theorem}
	\label{ele-sub-nor}
	Let
	\( G \) be a reductive group scheme over
	\( K \) of local isotropic rank at least
	\( 2 \) and
	\( R \) a commutative unital
	\( K \)-algebra in an infinitary positive category
	\( \mathbf{C} \). Then
	\( s( Q(R) ) \subseteq \Elem_G(R) \) for every morphism
	\( s \colon Q \to G \) between pointed affine schemes of finite presentation taking values in the elementary subgroup functor
	\( \Elem_G(\R) \) (considered as an object of
	\( \Presheaf_K \) or
	\( \Ind( \Presheaf_K ) \)). In particular,
	\( \Elem_G(R) \) is independent of the choice of the generating morphism
	\( t \colon P \to G \).
	
	If an affine scheme
	\( H \) of finite presentation acts on
	\( G \) by group automorphisms, i.e.\ there are morphisms
	\( a, a' \colon H \times G \to G \) such that
	\( a(h, g g') = a(h, g)\, a(h, g') \),
	\( a( h, a'(h, g) ) = g \),
	\( a'( h, a(h, g) ) = g \), then
	\[
		a\bigl( H(R), \Elem_G(R) \bigr)
		\subseteq
		\Elem_G(R).
	\]
	In particular,
	\( \Elem_G(R) \leqt G(R) \).
\end{theorem}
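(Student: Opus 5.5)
The approach is to exploit the fact that \( \Elem_G(\R) \) is a subobject of \( G(\R) \) in \( \Ind(\Presheaf_K) \) given by the explicit ind-filtration by words of length \( m \) in \( t(P(\R)) \cup \{1\} \cup t(P(\R))^{-1} \). Each stage of this filtration is the image of a morphism of affine schemes of finite presentation,
\[
	t_m \colon (P \sqcup \ast \sqcup P)^m \to G,
\]
or, more cleanly, of \( P^{m} \times \{\pm 1\}^m \to G \). The plan is to reduce every statement to a \emph{Yoneda-type} argument in \( \Presheaf_K \), and then transport it to an arbitrary \( R \) in an infinitary positive category \( \mathbf{C} \) by evaluation.

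\textbf{Step 1.} Let \( s \colon Q \to G \) factor through \( \Elem_G(\R) \) as an object of \( \Ind(\Presheaf_K) \). Since \( Q \) is representable, and hence a compact object of \( \Ind(\Presheaf_K) \), the morphism \( Q \to \Elem_G(\R) = \varinjlim_m \Image(t_m) \) factors through some finite stage \( \Image(t_m) \). Epimorphisms in the presheaf category are pointwise surjections, and \( Q \) is representable, so evaluating at the generic point \( \mathrm{id}_Q \in Q(\mathcal{O}(Q)) \) lifts the factorization. We therefore obtain a morphism \( q \colon Q \to P^m \times \{\pm1\}^m \) with \( s = t_m \circ q \). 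If \( s \) is only given to take values in the presheaf \( \Elem_G(\R) \), the same argument applies: the identity point lies in some word of finite length, up to a finite disjoint decomposition of \( \Spec \mathcal{O}(Q) \) for the signs. Since a word of length \( m \) can absorb unused letters via the base point, the signs can be handled by taking \( Q \) to be a finite union of clopen pieces.

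\textbf{Step 2.} Now apply the functor \( X \mapsto X(R) \) to the factorization \( s = t_m \circ q \). This functor is compatible with composition and finite products because it is defined by finite limits in the regular category. We get \( s(Q(R)) = t_m(q(Q(R))) \subseteq t_m(P(R)^m \times \ldots) \), and the latter lies in the subgroup generated by \( t(P(R)) \), which is \( \Elem_G(R) \). Independence of the choice of \( t \) then follows by applying this to another generating morphism \( t' \), which takes values in \( \Elem_G(\R) \) by \cite[theorem 2]{ele-loc}, and by symmetry.

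\textbf{Step 3.} For the action \( a \), consider \( Q = H \times P \) and \( s(h,p) = a(h, t(p)) \). If \( s \) takes values in \( \Elem_G(\R) \), then Step 2 gives \( a(H(R), t(P(R))) \subseteq \Elem_G(R) \). Since \( a(h,-) \) is a group homomorphism with inverse \( a'(h,-) \), the set \( a(H(R), \Elem_G(R)) \) is contained in \( \Elem_G(R) \); in the internal language of \( \mathbf{C} \) this is a routine induction on word length. The key input is therefore that \( \Elem_G(\R) \) is stable under \( a(H(\R), -) \) in the presheaf sense, i.e. that \( a(h,g) \in \Elem_G(B) \) for every \( K \)-algebra \( B \), every \( h \in H(B) \) and every \( g \in \Elem_G(B) \). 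This is the main obstacle. By the independence and functoriality of \( \Elem_G \) from \cite[theorem 1]{ele-loc}, I would first prove the case of conjugation, \( H = G \), which is the normality \( \Elem_G(B) \leqt G(B) \) for unital rings from \cite{ele-loc}. The general case I would handle via locality. The stabilizer condition is Zariski local, since \( \Elem_G(B) \) is glued from colocalized pieces \( \Elem_{G,\IsoPinning_i}(B^{(s_i^\infty)}) \), and \( a(h,-) \) transports an isotropic pinning of \( G_{B_{s}} \) to another isotropic pinning \( a(h,\IsoPinning) \). The elementary subgroup is independent of the chosen pinning, and \( a(h,-) \) maps \( U_\alpha(B^{(s^\infty)}) \) to the root subgroups of the transported pinning. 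Together these give \( a(h, \Elem_G(B)) = \Elem_G(B) \).

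Taking \( H = G \) acting by conjugation, \( a(h,g) = h g h^{-1} \), yields \( \Elem_G(R) \leqt G(R) \).
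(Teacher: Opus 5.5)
Your proposal is correct and follows the paper's proof. The universal point of \( Q \) lies in a finite stage of the ind-filtration of \( \Elem_G(\R) \), which gives an identity \( s = \prod_i t(p_i)^{\varepsilon_i} \) of scheme morphisms that can be evaluated on any \( R \). The action case then reduces to invariance of \( \Elem_G(\R) \) under group scheme automorphisms: the paper cites this as \cite[lemma 19]{ele-loc}, while you sketch it via transport of isotropic pinnings and independence of choices. (Your clopen decomposition for the signs is unnecessary: unions of subobjects in \( \Presheaf_K \) are computed pointwise, so the universal point already lies in a single word with fixed signs \( \varepsilon_i \in \{-1, 0, 1\} \).)
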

\begin{proof}
	Let
	\( s \colon Q \to G \) be such a morphism. By
	\cite[lemma 22]{ele-loc} it does not matter in which category we consider the elementary subgroup functor, in any case
	\[
		s( Q(\R) )
		\subseteq
		\bigl(
			t( P(\R) ) \cup \{ 1 \} \cup t( P(\R) )^{- 1}
		\bigr)^N
	\]
	for sufficiently large
	\( N \). Let
	\( A = \Gamma( Q, \mathcal{O}_Q ) \) be the affine algebra of
	\( Q \) and
	\( q_{ \mathrm{univ} } \in Q(A) \) the universal point, so
	\[
		s( q_{ \mathrm{univ} } )
		=
		\prod_{i = 1}^N
			t(p_i)^{ \varepsilon_i }
	\]
	for some signs
	\( \varepsilon_i \in \{ - 1, 0, 1 \} \) and universal elements
	\( p_i \in P(A) \). It follows that
	\[
		s(q)
		=
		\prod_{i = 1}^N
			t( p_i(q) )^{ \varepsilon_i }
	\]
	for
	\( q \colon Q(R) \) as an identity between morphisms
	\( Q(R) \to G(R) \) in the category
	\( \mathbf{C} \).
	
	Now let us prove the last claim about a morphism
	\( a \colon H \times G \to G \). By
	\cite[lemma 19]{ele-loc} the elementary subgroup
	\( \Elem_G(\R) \leqt G(\R) \) is preserved by all group scheme automorphisms of
	\( G \) inside the functor category
	\( \Presheaf_K \). In particular,
	\( H \times P \to G,\, (h, p) \mapsto a(h, t(p)) \) takes values in the elementary subgroup. By the above,
	\[
		a\bigl( H(R), t(P(R)) \bigr)
		\subseteq
		\Elem_G(R).
		\qedhere
	\]
\end{proof}

We also have the following properties.

\begin{theorem}
	\label{ele-sub-ope}
	Let
	\( G \) be a reductive group scheme over
	\( K \) of local isotropic index at least
	\( 2 \) and
	\( R \) a commutative unital
	\( K \)-algebra in an infinitary positive category
	\( \mathbf{C} \).
	\begin{enumerate}
		
		\item
		If
		\( R = \prod_{i = 1}^n R_i \) as an algebra, then
		\( \Elem_G(R) = \prod_{i = 1}^n \Elem_G(R_i) \).
		
		\item
		If
		\( G = \prod_{i = 1}^n G_i \), then
		\( \Elem_G(R) = \prod_{i = 1}^n \Elem_{G_i}(R) \).
		
		\item
		If
		\( C \leq G \) is a central group subscheme of multiplicative type and
		\( f \colon G \to G / C \) is the canonical homomorphism, then
		\( \Elem_{G / C}(R) = f( \Elem_G(R) ) \).
		
		\item
		If
		\( K_0 \to K \) is a finite \'etale homomorphism and
		\( G_0 = \Restrict_{ K / K_0 }(G) \), then
		\(
			\Elem_{G_0}(R)
			=
			\Elem_G( R \otimes_{K_0} K )
		\). Here
		\( R \otimes_{K_0} K \) means a suitable direct factor of
		\( R^n \) if
		\( K \) is a direct summand of
		\( K_0^n \) as a
		\( K_0 \)-module.
		
	\end{enumerate}
\end{theorem}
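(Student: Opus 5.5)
The plan is to reduce every claim to the universal case \( R = \R \) in \( \Presheaf_K \) (or \( \Ind(\Presheaf_K) \)), where the elementary subgroup is built from local isotropic pinnings and colocalizations. Then I would transfer each statement to arbitrary \( R \) using theorem \ref{ele-sub-nor}. The principle from that theorem is the following: if a morphism \( s \colon Q \to G \) of pointed affine schemes of finite presentation lands in \( \Elem_G(\R) \), then \( s(Q(R)) \subseteq \Elem_G(R) \). So each inclusion between elementary subgroups over \( R \) follows once we exhibit generating morphisms \( t \colon P \to G \) on both sides and check that each one factors through the other's elementary subgroup functor.

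For (1), I would use that \( P(R) = \prod_i P(R_i) \) and \( G(R) = \prod_i G(R_i) \), because \( X(-) \) preserves finite products of algebras. The inclusion \( \subseteq \) is then immediate. For \( \supseteq \), let \( e_i \in R \) be the idempotents. The subgroup generated by \( t(P(R)) \) contains the elements \( t(p_1, 1, \ldots, 1) \), since \( P \) is pointed and \( t \) may be taken to send the base point to \( 1 \). These elements generate \( \Elem_G(R_1) \times 1 \times \ldots \times 1 \).

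For (2), I would choose local isotropic pinnings on each \( G_i \) over a common Zariski cover (refining if needed). Lemma \ref{iso-pin}(2) gives product pinnings on \( G \) whose root subgroups are exactly those of the factors. Hence \( \Elem_G(\R) = \prod_i \Elem_{G_i}(\R) \) by the colocalization definition, and the generating morphisms may be taken as the product of the \( t_i \). Claim (3) is similar: lemma \ref{iso-pin}(1) says \( f \) maps root subgroups \( U_\alpha \) isomorphically onto those of \( G / C \). Therefore \( f \circ t \) is a generating morphism for \( \Elem_{G/C}(\R) \), and the independence of \( t \) in theorem \ref{ele-sub-nor} gives \( \Elem_{G/C}(R) = f(\Elem_G(R)) \).

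The main obstacle is (4), because isotropic pinnings on \( G_0 = \Restrict_{K/K_0}(G) \) need not come from pinnings on \( G \) globally. For \( \supseteq \), lemma \ref{iso-pin}(3) applies: a local pinning of \( G \) over \( K_{s} \) induces one on \( G_0 \) over the base only after we pass to Zariski opens of \( \Spec(K_0) \). To handle this, I would first cover \( \Spec(K_0) \) so that over each \( (K_0)_{s_j} \) both groups admit pinnings. Those on \( G_0 \) are described by lemma \ref{iso-pin}(5) as products of Weil restrictions of pinnings on factors of \( K \otimes_{K_0} (K_0)_{s_j} \). By (1) and (2), the root subgroups \( \Restrict(U_\alpha)(\R^{(s_j^\infty)}) \) coincide with \( U_\alpha(\R^{(s_j^\infty)} \otimes_{K_0} K) \). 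This identifies \( \Elem_{G_0}(\R) \) with \( \Elem_G(\R \otimes_{K_0} K) \) as subfunctors of \( G_0(\R) = G(\R \otimes_{K_0} K) \). Here I would use independence of the choice of cover and pinnings, which follows from lemma 19 and theorem 1 of \cite{ele-loc}. Passing to arbitrary \( R \) then requires presenting \( K \) as a direct summand of \( K_0^n \), so that \( R \otimes_{K_0} K \) is a retract of \( R^n \) and the generating morphism for \( G \) over \( K \) becomes a morphism of \( K_0 \)-schemes of finite presentation via Weil restriction. The delicate point is checking that the retract construction commutes with forming \( \Elem \) in a general infinitary positive category. I would verify this with the universal-point argument from the proof of theorem \ref{ele-sub-nor}.
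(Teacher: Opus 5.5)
Your proposal is correct and follows the paper's route. You handle (1) directly, and you prove (2)--(4) by reducing to the universal algebra \( \R \), expanding the colocalization definition with lemma \ref{iso-pin} (parts (1), (2), (3) and (5), using independence of the cover and pinnings), and then transferring to arbitrary \( R \) through generating morphisms as in theorem \ref{ele-sub-nor}. The one ingredient you use without citing is lemma \ref{loc-iso-ope}, which guarantees that \( G_i \), \( G / C \) and \( G_0 \) have local isotropic rank at least \( 2 \), so that their elementary subgroups are defined at all.
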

\begin{proof}
	The first property is obvious. It suffices to check the remaining ones for
	\( R = \R \in \Ind( \Pro( \Presheaf_K ) ) \), and this can be done by expanding the definition via colocalization using lemmas
	\ref{iso-pin} and
	\ref{loc-iso-ope}.
\end{proof}

\begin{theorem}
	\label{perfect}
	Let
	\( G \) be a reductive group scheme over
	\( K \) of local isotropic rank at least
	\( 2 \) and
	\( R \) a commutative unital
	\( K \)-algebra in an infinitary positive category
	\( \mathbf{C} \). Then the derived subgroup
	\( [ \Elem_G(R), \Elem_G(R) ] \) is perfect. If for every maximal ideal
	\( \mathfrak{m} \leqt K \) with the residue field
	\( \Field_2 \) the root system
	\( \widetilde{\Phi}_{ \mathfrak{m} } \) of
	\( G_{ \overline{ \kappa( \mathfrak{p} ) } } \) has no components of types
	\( \RootSys{B}{2} \) and
	\( \RootSys{G}{2} \), then
	\( \Elem_G(R) \) itself is perfect.
\end{theorem}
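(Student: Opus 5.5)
The plan is to reduce everything to the universal algebra \( \R \) and then to statements about generating morphisms. By theorem \ref{ele-sub-nor} it suffices to show that the generating morphism \( t \colon P \to G \), or rather suitable generators of \( [ \Elem_G(\R), \Elem_G(\R) ] \), take values in \( [ [\Elem_G(\R), \Elem_G(\R)], [\Elem_G(\R), \Elem_G(\R)] ] \) (respectively in \( [ \Elem_G(\R), \Elem_G(\R) ] \) for the second claim). Such identities in \( \Presheaf_K \), or in \( \Ind( \Presheaf_K ) \), specialize to any commutative unital \( K \)-algebra \( R \) in an infinitary positive category. This specialization works by the same universal-point argument as in the proof of theorem \ref{ele-sub-nor}: an identity \( s(q_{\mathrm{univ}}) = \prod t(p_i)^{\pm 1} \) with commutator factors transfers to \( R \).

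Next I localize. I choose a Zariski cover \( \Spec(K) = \bigcup_i \Spec(K_{s_i}) \) with isotropic pinnings \( \IsoPinning_i \) of rank at least \( 2 \). By the definition via colocalization, \( \Elem_G(\R) \) is generated by the images of \( U_\alpha( \R^{(s_i^\infty)} ) \). It is therefore enough to show that each such image lies in the derived subgroup, and then in the second derived subgroup, inside \( \Ind(\Pro(\Presheaf_K)) \). Using lemma \ref{iso-pin} and theorem \ref{ele-sub-ope}, I reduce to \( G \) simple adjoint, or rather a Weil restriction of one, with irreducible relative root system \( \Phi \) of rank at least \( 2 \).

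Inside one chart I use the commutator formula. For a non-ultrashort root \( \gamma \) that can be written as \( \alpha + \beta \) with a structure constant invertible, \( t_\gamma(x) \) is a product of \( [t_\alpha(y), t_\beta(z)] \) and higher terms, which is standard. This fails only when the relevant constants are divisible by \( 2 \) or \( 3 \), that is, for absolute components of type \( \RootSys{B}{2} \), \( \RootSys{G}{2} \) (or relative \( \RootSys{BC}{} \), \( \RootSys{C}{} \) situations) at residue field \( \Field_2 \). The hypothesis of the second claim excludes exactly this, so the idea would be as follows. Over the pro-algebra, the ideal generated by \( 2 \) (or the relevant constants) together with \( x(1 - x) \)-type elements should be everything at maximal ideals whose residue field is not \( \Field_2 \). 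I would prove perfectness by a colocalization argument: cover \( \Max(K) \) by principal opens on which either the obstruction vanishes, or the residue fields are larger than \( \Field_2 \) and one can use a torus element \( h \in L \) with \( [h, t_\alpha(x)] = t_\alpha((\chi(h) - 1)x) \cdots \) where \( \chi(h) - 1 \) is a unit locally. Then \( U_\alpha \subseteq [\Elem, \Elem] \) locally, hence globally.

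For the first claim I do not need invertibility. I plan to show \( [\Elem_G, \Elem_G] \) contains all \( t_\gamma(x) \) for long (and short) roots \( \gamma \) multiplied by the "bad" constants, together with relative elementary-type generators, and that these are again commutators of elements of \( [\Elem, \Elem] \). Concretely, \( [t_\alpha(x), t_\beta(y)] \) already lies in \( [\Elem, \Elem] \). Expanding \( [t_\alpha(x), [t_\beta(y), t_\delta(z)]] \) via Hall–Witt identities shows each such generator is a product of commutators of commutators, as in the Chevalley case where \( [\Elem, \Elem] \) is the relative elementary subgroup for the ideal generated by \( 2 \) (or the appropriate admissible pair). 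The main obstacle is precisely this step: the rank \( 2 \) non-simply-laced cases \( \RootSys{B}{2} \), \( \RootSys{G}{2} \), \( \RootSys{BC}{2} \) with nilpotent-module root subgroups. There I would first try to do an explicit computation with the root subgroups of a rank \( 2 \) subsystem (using Weyl elements \( w_\alpha \) to move roots around, which preserve \( [\Elem,\Elem] \)). I would show that the subgroup generated by commutators \( [U_\alpha, U_\beta] \) over all non-antiparallel pairs is normalized by all \( U_\gamma \) and equals its own derived subgroup. Finally, I would glue the charts by colocalization as above.
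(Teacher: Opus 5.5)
\textbf{Reductions and the second claim.} Your sketch of the second claim is essentially an outline of \cite[theorem 4]{ele-loc}, which the paper simply cites. Two corrections are needed. First, the only obstruction comes from absolute components of type \( \RootSys{B}{2} \), \( \RootSys{G}{2} \) at residue fields \( \Field_2 \); relative \( \RootSys{BC}{2} \) or \( \RootSys{C}{\ell} \) situations cause no trouble. Second, for perfectness you must pass to the simply connected cover, not to the adjoint quotient. By theorem \ref{ele-sub-ope}(3), perfectness descends along \( \Elem_G(R) \to \Elem_{G / C}(R) \) but does not lift. With these corrections the first claim reduces, as in the paper, to split simply connected \( \RootSys{B}{2} \), \( \RootSys{G}{2} \) over a colocalization \( \R^{(s^\infty)} \) with all \( P_\alpha \cong \Affine^1 \).

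\textbf{The first claim: wrong description of the derived subgroup.} This case is the entire content of the first claim, and your proposal leaves it as ``the main obstacle''. Moreover, the description of \( [\Elem, \Elem] \) you intend to follow is wrong. It is not a relative elementary subgroup for \( 2R \), nor for any admissible pair, and it is not generated by (multiples of) root elements at all. For \( K = R = \Field_2 \) and type \( \RootSys{B}{2} \) we have \( \Elem = \mathrm{Sp}(4, \Field_2) \cong \Sym_6 \) and \( [\Elem, \Elem] \cong \mathrm{A}_6 \), while every long or short root element is an odd permutation. The right generators are the mixed products \( t_\alpha(x y)\, t_\beta(x y^2) \) (short \( \alpha \), long \( \beta \) at \( 45^\circ \)) together with \( t_\beta(2 y) \). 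For \( \RootSys{G}{2} \) they are \( t_\alpha(2 x) \), \( t_\alpha(x y)\, t_\beta(x y^2) \) and \( t_\gamma(z) \). This is the paper's \( \Elem'_G(R) \), and it agrees with your last candidate, the subgroup generated by non-antiparallel commutators.

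\textbf{The missing step: antiparallel commutators.} Proving that this subgroup is perfect and normalized by all \( U_\gamma \) is not enough, and normality cannot be done first. \( [\Elem, \Elem] \) is normally generated also by the antiparallel commutators \( [t_{-\alpha}(x), t_\alpha(y)] \), on which the commutator formula is silent. Conjugating a generator \( t_\alpha(x y)\, t_\beta(x y^2) \) by \( t_{-\alpha}(z) \) runs into exactly these commutators.

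The idea you are missing is the paper's identity \( [A, B] = \up{A}{[B C, D]}\, [A D, B C]\, \up{B}{[C, A]} \) with \( A = t_{-\alpha}(x) \), \( B = t_\alpha(y) \), \( C = t_\beta(y) \), \( D = t_{\beta - 2 \alpha}(x) \). Here \( A D \) and \( B C \) are elements of \( \Elem' \), while \( [B C, D] = [t_\alpha(y), t_{\beta - 2 \alpha}(x)] \) and \( [C, A] \) are non-antiparallel commutators. So \( [t_{-\alpha}(x), t_\alpha(y)] \) lies in the subgroup generated by conjugates of \( \Elem' \). Analogous identities handle \( [t_{-\beta}(x), t_\beta(y)] \) and \( \RootSys{G}{2} \). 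Hence \( [\Elem, \Elem] \) is generated by the perfect subgroups \( \up{g}{\Elem'} \), and is therefore perfect.

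An alternative closing argument also works. The normal closure \( N \) of \( \Elem' \) is perfect for the same reason. Modulo \( N \), each \( U_\alpha \) commutes with all \( U_\gamma \), \( \gamma \neq - \alpha \), and these already generate \( \Elem \) in types \( \RootSys{B}{2} \), \( \RootSys{G}{2} \), so \( [\Elem, \Elem] \leq N \).

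Without some such argument, together with the explicit check that \( \Elem' \) is perfect, your proposal does not establish the first claim.
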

\begin{proof}
	Without loss of generality,
	\( R = \R \in \Ind( \Pro( \Presheaf_K ) ) \). The second claim is
	\cite[theorem 4]{ele-loc}. To prove the first claim we can instead assume that
	\( G \) is simple simply connected and
	\( R = \R^{ (s^\infty) } \), where
	\(
		G_{ K_s }
		\cong
		\ChevGrp^\simpcon_{ K_s }( \Phi, {-} )
	\) with
	\( \Phi \) of type
	\( \RootSys{B}{2} \) or
	\( \RootSys{G}{2} \), so all
	\( P_\alpha \) can be identified with
	\( \Affine^1_{ K_s } \).
	
	\begin{figure}[ht]
		\centering
		\begin{tikzpicture}
				\foreach \i in { 0, 1, ..., 3 } {
					\draw[->]
						( 0, 0 )
						--
						( \i * 90 : { 2 / sqrt(2) } );
					\draw[->]
						( 0, 0 )
						--
						( 45 + \i * 90 : 2 );
				}
				\node
					[ right ]
					at ( 0 : { 2 / sqrt(2) } )
					{
						\( \alpha \)
					};
				\node
					[ left ]
					at ( 180 : { 2 / sqrt(2) } )
					{
						\( - \alpha \)
					};
				\node
					[ below ]
					at ( 270 : { 2 / sqrt(2) } )
					{
						\( \alpha - \beta \)
					};
				\node
					[ above right, inner sep = .2em ]
					at ( 45 : 2 )
					{
						\( \beta \)
					};
				\node
					[ above left, inner sep = .2em ]
					at ( 135 : 2 )
					{
						\( \beta - 2 \alpha \)
					};
				\node
					[ below left, inner sep = .2em ]
					at ( 225 : 2 )
					{
						\( - \beta \)
					};
				\tikzset{ shift = { ( 5, 0 ) } }
				\foreach \i in { 0, 1, ..., 5 } {
					\draw[->]
						( 0, 0 )
						--
						( \i * 60 : { 2 / sqrt(3) } );
					\draw[->]
						( 0, 0 )
						--
						( 30 + \i * 60 : 2 );
				}
				\node
					[ right ]
					at ( 0 : { 2 / sqrt(3) } )
					{
						\( \alpha \)
					};
				\node
					[ above right, inner sep = .2em ]
					at ( 60 : { 2 / sqrt(3) } )
					{
						\( \beta \)
					};
				\node
					[ above left, inner sep = .2em ]
					at ( 120 : { 2 / sqrt(3) } )
					{
						\( \beta - \alpha \!\!\! \!\!\! \)
					};
				\node
					[ left ]
					at ( 180 : { 2 / sqrt(3) } )
					{
						\( - \alpha \)
					};
		\end{tikzpicture}
		\caption{%
			Root systems
			\( \RootSys{B}{2} \) and
			\( \RootSys{G}{2} \)%
		}
	\end{figure}
	
	In the first case let
	\( \Elem'_G(R) \leq G(R) \) be the group subobject generated by
	\( t_\alpha(x y)\, t_\beta(x y^2) \) and
	\( t_\beta(2 y) \) for
	\( x, y \colon R \),
	\( \angle(\alpha, \beta) = 45^\circ \),
	\( \alpha \) is short, and
	\( \beta \) is long. Clearly,
	\[
		\Elem'_G(R)
		=
		[ \Elem'_G(R), \Elem'_G(R) ]
		\leq
		[ \Elem_G(R), \Elem_G(R) ].
	\]
	On the other hand, we have
	\begin{align*}
			[ t_{ - \alpha }(x), t_\alpha(y) ]
			&=
			\up{ t_{ - \alpha }(x) }{
				\bigl[
						t_\alpha(y)\,
						t_\beta(y)
					,
						t_{ \beta - 2 \alpha }(x)
				\bigr]
			}\,
			\bigl[
					t_{ - \alpha }(x)\,
					t_{ \beta - 2 \alpha }(x)
				,
					t_\alpha(y)\,
					t_\beta(y)
			\bigr]\,
			\up{ t_\alpha(y) }{
				\bigl[
					t_\beta(y),
					t_{ - \alpha }(x)
				\bigr]
			},
		\\
			[ t_{ - \beta }(x), t_\beta(y) ]
			&=
			\up{ t_{ - \beta }(x) }{
				\bigl[
					t_\beta(y),
					t_{ \alpha - \beta }(x)
				\bigr]
			}\,
			\bigl[
					t_{ - \beta }(x)\,
					t_{ \alpha - \beta }(x)
				,
					t_\beta(y)\,
					t_\alpha(y)
			\bigr]\,
			\up{ t_\beta(y) }{
				\bigl[
						t_\alpha(y)
					,
						t_{ - \beta }(x)\,
						t_{ \alpha - \beta }(x)
				\bigr]
			}
	\end{align*}
	with
	\( \alpha \) and
	\( \beta \) as above. It easily follows that
	\( \Elem'_G(R) \) is normalized by
	\( \Elem_G(R) \) and all generators
	\( [ t_\gamma(x), t_\delta(y) ] \) of
	\( [ \Elem_G(R), \Elem_G(R) ] \) as a normal subgroup of
	\( \Elem_G(R) \) lie in
	\( \Elem'_G(R) \). Therefore
	\( \Elem'_G(R) = [ \Elem_G(R), \Elem_G(R) ] \).
	
	In the second case we similarly define
	\( \Elem'_G(R) \) as the group subobject generated by
	\( t_\alpha(2 x) \),
	\( t_\alpha(x y)\, t_\beta(x y^2) \) for short roots
	\( \alpha \) and
	\( \beta \) with
	\( \angle(\alpha, \beta) = 60^\circ \) and by
	\( t_\gamma(z) \) for long
	\( \gamma \). The corresponding calculation is
	\[
		[ t_{ - \alpha }(x), t_\alpha(y) ]
		=
		\up{ t_{ - \alpha }(x) }{
			\bigl[
					t_\alpha(y)\,
					t_\beta(y)
				,
					t_{ \beta - \alpha }(x)
			\bigr]
		}\,
		\bigl[
				t_{ - \alpha }(x)\,
				t_{ \beta - \alpha }(x)
			,
				t_\alpha(y)\,
				t_\beta(y)
		\bigr]\,
		\up{ t_\alpha(y) }{
			\bigl[
				t_\beta(y),
				t_{ - \alpha }(x)
			\bigr]
		}
	\]
	for short roots
	\( \alpha \),
	\( \beta \) at the angle
	\( 60^\circ \).
\end{proof}

\section{Relative elementary groups}

So now we have elementary subgroups
\( \Elem_G(R) \) for unital algebras
\( R \) and elementary subgroups
\( \Elem_{ G, \IsoPinning }(A) \) for non-unital algebras in the presence of an isotopic pinning
\( \IsoPinning \). The latter ones are called
\textit{unrelativized} in order to distinguish them from the relative ones introduced below.

Note that if
\( \mathbf{C} \) is an infinitary positive category, then there is a canonical functor
\( \Set \to \mathbf{C} \) sending every set to the corresponding coproduct of terminal objects. Hence a structure of
\( K \)-algebra on a ring object
\( A \) in
\( \mathbf{C} \) can be given by a multiplication morphism
\( K \times A \to A \) satisfying usual axioms. In the cases
\( \Ind( \Pro(\Set) ) \) and
\( \Ind( \Pro( \Presheaf_K ) ) \) the resulting functor is different from the usual one given via inclusions
\[
	\Set \subseteq \Pro(\Set) \subseteq \Ind( \Pro(\Set) )
	\text{ and }
	\Set
	\subseteq
	\Presheaf_K
	\subseteq
	\Pro( \Presheaf_K )
	\subseteq
	\Ind( \Pro( \Presheaf_K ) ).
\]

Let
\( A \) be a non-unital commutative algebra over a commutative unital ring object
\( R \) in an infinitary positive category
\( \mathbf{C} \). Assume further that
\( R \) itself is an algebra over
\( K \), equivalently, there is a fixed homomorphism
\( K \to R \) of unital ring objects. Then the semidirect product
\( A \rtimes R \) is also a commutative unital
\( K \)-algebra and we define the
\textit{relative elementary group} as
\[
	\Elem_G(R, A)
	=
	\Ker\bigl( \Elem_G( A \rtimes R ) \to \Elem_G(R) \bigr)
	\leq
	G(A).
\]
Here the homomorphism
\( A \rtimes R \to R \) is given by the second projection. It admits the section
\( R \to A \rtimes R \), so
\begin{align*}
		G(A \rtimes R) &= G(A) \rtimes G(R),
	&
		\Elem_G(A \rtimes R)
		&=
		\Elem_G(R, A) \rtimes \Elem_G(R).
\end{align*}

If
\( A \) itself is unital, then there is a canonical homomorphism
\( f \colon R \to A \) and the isomorphism
\[
	A \rtimes R \cong A \times R,\,
	a \rtimes r \mapsto ( a + f(r), r ).
\]
It follows that
\( \Elem_G(R, A) = \Elem_G(A) \), so this group object is independent of
\( R \).

\begin{lemma}
	\label{rel-gen}
	Let
	\( t_i \colon P_i \to G \),
	\( i \in I \) be homomorphisms from affine group schemes such that the elementary subgroup functor is generated by their images. Then
	\[
		\Elem_G(R, A)
		=
		\bigl\langle
			\up{ \Elem_G(R) }{ t_i( P_i(A) ) }
			\mid
			i \in I
		\bigr\rangle.
	\]
\end{lemma}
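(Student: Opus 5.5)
Write $F = \bigl\langle \up{ \Elem_G(R) }{ t_i( P_i(A) ) } \mid i \in I \bigr\rangle$. The plan is to show $F \leq \Elem_G(R, A)$ and then $\Elem_G(A \rtimes R) \leq F \rtimes \Elem_G(R)$. Intersecting the latter with the kernel of $G(A \rtimes R) \to G(R)$ gives the claim.

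\textbf{Inclusion $F \leq \Elem_G(R, A)$.} Apply theorem \ref{ele-sub-nor} to the unital algebra $A \rtimes R$. The images $t_i( P_i(A \rtimes R) )$ lie in $\Elem_G(A \rtimes R)$, and $P_i(A)$ is the kernel of $P_i(A \rtimes R) \to P_i(R)$, so $t_i(P_i(A)) \leq \Elem_G(A \rtimes R) \cap G(A) = \Elem_G(R, A)$. The group $\Elem_G(R)$ embeds into $\Elem_G(A \rtimes R)$ through the section $R \to A \rtimes R$. Hence conjugates of these images by $\Elem_G(R)$ stay in $\Elem_G(A \rtimes R) \cap G(A)$, and $F \leq \Elem_G(R, A)$.

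\textbf{Reverse inclusion.} By construction $F$ is normalized by $\Elem_G(R)$, so $F \rtimes \Elem_G(R) = F \cdot \Elem_G(R)$ is a subgroup object of $G(A \rtimes R)$. It suffices to show it contains the generators $t_i( P_i(A \rtimes R) )$ of $\Elem_G(A \rtimes R)$. The key point is that each $P_i$ is a group scheme and $t_i$ is a homomorphism. The splitting $A \rtimes R = A \oplus R$ then gives a decomposition of group objects $P_i(A \rtimes R) = P_i(A) \rtimes P_i(R)$: every point $p$ factors as $p = p_A \cdot p_R$, where $p_R$ is the image of $p$ in $P_i(R)$ pushed back through the section, and $p_A = p\, p_R^{- 1}$ lies in the kernel $P_i(A)$. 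Consequently
\[
	t_i(p) = t_i(p_A)\, t_i(p_R) \in t_i(P_i(A))\, \Elem_G(R) \subseteq F \rtimes \Elem_G(R).
\]
Intersecting with $G(A)$ gives $\Elem_G(R, A) \leq F$.

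The main obstacle is that we work with group objects in an infinitary positive category rather than with sets. Two things need checking there: that "the subgroup generated by" commutes with the semidirect product decomposition, and that the product of a normal subgroup object with a subgroup object is again a subgroup object. I would handle both by the standard internal-logic arguments, as in the references cited before theorem \ref{ele-sub-nor}. Since all statements above are geometric (existence of factorizations, closure under products), they transfer from $\Set$ to $\mathbf{C}$, so the element-wise reasoning is justified internally. If needed, one can reduce to the universal case $R = \R$ using theorem \ref{ele-sub-nor}.
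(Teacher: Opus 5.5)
Your proposal is correct and takes essentially the same route as the paper. Both arguments first note that the generated subgroup lies in $\Elem_G(R, A)$. Both then use the splitting $P_i(A \rtimes R) = P_i(A) \rtimes P_i(R)$ to see that $F \rtimes \Elem_G(R)$ contains the generators $t_i(P_i(A \rtimes R))$ of $\Elem_G(A \rtimes R)$, and finally intersect with $G(A)$. Your remarks on internal logic and on the reduction to $\mathcal{R}$ go beyond the paper, which leaves these points implicit; they are not needed for the argument.
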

\begin{proof}
	Clearly, the group subobject
	\( H \) generated by all
	\( \up{ \Elem_G(R) }{ t_i( P_i(A) ) } \) is contained in
	\( \Elem_G(R, A) \). Moreover,
	\( H \) is normalized by
	\( \Elem_G(R) \) by construction and
	\[
		t_i( P_i( A \rtimes R) )
		=
		t_i( P_i(A) ) \rtimes t_i( P_i(R) )
		\subseteq
		H \rtimes \Elem_G(R).
	\]
	It follows that
	\( H \rtimes \Elem_G(R) = \Elem_G(A \rtimes R) \), so
	\( H = \Elem_G(R, A) \).
\end{proof}

We say that a non-unital commutative ring object
\( R \) in an infinitary positive category is
\textit{power idempotent} if for every
\( k \geq 1 \) the morphism
\( R \to R,\, x \mapsto x^k \) generates
\( R \) as an ideal. For example, unital rings and
\( K^{ (s) } \in \Ind( \Pro(\Set) ) \) are power idempotent.

\begin{theorem}
	\label{unrel}
	Let
	\( G \) be a reductive group scheme over
	\( K \) of local isotropic rank at least
	\( 2 \) and
	\( R \) a non-unital commutative
	\( K \)-algebra in an infinitary positive category. Assume that
	\( A \) is power idempotent. Then
	\( \Elem_G(A) = \Elem_G(R, A) \) is independent of the choice of
	\( R \) as a group subobject of
	\( G(A) \). If
	\( G \) has an isotropic pinning
	\( \IsoPinning \) of rank at least
	\( 2 \), then
	\( \Elem_G(A) = \Elem_{ G, \IsoPinning }(A) \).
\end{theorem}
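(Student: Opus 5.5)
The statement has a typo: $R$ should be a commutative unital $K$-algebra and $A$ a non-unital commutative $R$-algebra which is power idempotent. We need two things. First, that $\Elem_G(R, A) \leq G(A)$ does not depend on $R$. Second, that it equals $\Elem_{G,\IsoPinning}(A)$ when a global isotropic pinning $\IsoPinning$ of rank at least $2$ exists.

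The plan is to reduce the first claim to the second by working Zariski locally. For the second claim, the inclusion $\Elem_{G,\IsoPinning}(A) \leq \Elem_G(R, A)$ is clear. Each $t_\alpha(P_\alpha(A))$ lies in the kernel of $\Elem_G(A \rtimes R) \to \Elem_G(R)$, because $t_\alpha(P_\alpha(A\rtimes R))$ lies in $\Elem_G(A\rtimes R)$ by theorem \ref{ele-sub-nor} applied to the morphism $t_\alpha$.

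For the reverse inclusion we use lemma \ref{rel-gen} with the generators $t_\alpha$. It gives that $\Elem_G(R,A)$ is generated by $\up{\Elem_G(R)}{t_\alpha(P_\alpha(A))}$. So it suffices to show that $\Elem_{G,\IsoPinning}(A)$ is normalized by $\Elem_G(R) = \Elem_{G,\IsoPinning}(R)$, and hence by each $t_\beta(P_\beta(R))$.

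This is where power idempotence enters. In the rank $\geq 2$ situation every root $\alpha$ can be written as a sum of two non-proportional roots, or is reached by commutators of such. Using the commutator formula with the homogeneous maps $f_{\alpha\beta ij}$, one shows that $t_\alpha(P_\alpha(A))$ is generated modulo smaller pieces by commutators $[t_\gamma(P_\gamma(A)), t_\delta(P_\delta(A))]$. More precisely, elements $t_\alpha(a b)$ with $a$ taken from an $A$-power are expressible through commutators of $t_\gamma(\ldots a\ldots)$ and $t_\delta(\ldots b\ldots)$. Since $x\mapsto x^k$ generates $A$ as an ideal, the elements $t_\alpha(r x^k)$ with $r\colon A\rtimes R$ generate $t_\alpha(P_\alpha(A))$.

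Conjugating such commutator expressions by $t_\beta(R)$ and applying the Chevalley commutator formula then keeps us inside the group generated by root elements with parameters in $A$. This is the usual argument that $\Elem(n,R,I)=\Elem(n,I)$ for idempotent-type $I$, now written as internal identities in $\mathbf{C}$. I expect this conjugation step to be the main obstacle: the case $\beta = -\alpha$, the ultrashort roots, and the $2$-step nilpotent parameters need care. I would try first to import the corresponding computation from \cite{ele-loc}, where the relative generation for $K^{(s)}$ was done; it carries over verbatim to any power idempotent $A$. If needed, I would pass to the universal case $\R$ so that the identities become identities of polynomial maps.

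For the first claim with only local pinnings, I would cover $\Spec(K)$ by $\Spec(K_{s_i})$ carrying pinnings $\IsoPinning_i$ of rank $\geq 2$, and use \cite[lemma 6]{ele-loc} to write $A$ as generated by images of $A^{(s_i^\infty)}$. By the pinned case, each piece $\Elem_G(R,A^{(s_i^\infty)})$ equals $\Elem_{G,\IsoPinning_i}(A^{(s_i^\infty)})$, which does not involve $R$. By the colocalization description of the elementary subgroup, with the generating morphism independent of choices by theorem \ref{ele-sub-nor}, $\Elem_G(R,A)$ is generated by the images of these pieces together with their $\Elem_G(R)$-conjugates.

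It remains to show that the conjugates are already absorbed. I would use the pinned normality statement applied over the colocalized algebra, with $R^{(s_i^\infty)}$ in place of $R$, combined with the fact that $\Elem_G(R)$ itself is generated by the images of $\Elem_{G,\IsoPinning_j}(R^{(s_j^\infty)})$. This is done exactly as in the proof of independence in \cite[theorem 1]{ele-loc}. That yields a description of $\Elem_G(R,A)$ purely in terms of $A$.
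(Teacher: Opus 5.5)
Your reduction of the pinned case to normality of \(\Elem_{G,\IsoPinning}(A)\) under \(\Elem_{G,\IsoPinning}(R)\), via lemma \ref{rel-gen}, is sound. The paper, however, puts the work in a different place.

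\paragraph{Where the technical work sits.} You want to prove ``relative equals unrelativized'' for an arbitrary power idempotent \(A\) by commutator calculus, using power idempotence to build \(t_\alpha(P_\alpha(A))\) out of commutators. The paper never does this calculus for general \(A\). It uses a universal element argument, and this is where power idempotence is spent: everything reduces to the single case where \(R\) is the coordinate ring of \(G \times P \times \Affine^1_K\) and \(A = R^{(x^\infty)}\). After splitting \(A = \sum_i R^{((x s_i)^\infty)}\), it invokes \cite[lemma 15]{ele-loc} only for these colocalized algebras over \(R_{s_i}\). Your assertion that the computation ``carries over verbatim to any power idempotent \(A\)'' is exactly the part the paper avoids. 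It is plausible, but it is the whole technical content of your first half: the cases where the \(f_{\gamma\delta 1 1}\)-images do not generate, the ultrashort roots, and the \(2\)-step nilpotent parameters. As written it is asserted, not checked.

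\paragraph{The gap in the local step.} You apply ``the pinned case'' to \(\Elem_G(R, A^{(s_i^\infty)})\). But \(\IsoPinning_i\) exists only over \(K_{s_i}\), and \(R\) is not a \(K_{s_i}\)-algebra. Nor can \(R^{(s_i^\infty)}\) serve as the base, since it is non-unital. What is missing is the base change to \(R_{s_i}\) combined with theorem \ref{ele-sub-nor}:
\begin{itemize}
\item after passing to \(\Ind(\Pro(\cdot))\), \(A^{(s_i^\infty)}\) is an \(R_{s_i}\)-algebra;
\item every \(g \in G(R)\) maps to \(G(R_{s_i})\);
\item normality of \(\Elem_G(A^{(s_i^\infty)} \rtimes R_{s_i})\) gives \(\up{g}{ t( P( A^{ (s_i^\infty) } ) ) } \subseteq \Elem_G(R_{s_i}, A^{(s_i^\infty)})\).
\end{itemize}
The pinned result over \(R_{s_i}\) then identifies this with \(\Elem_{G,\IsoPinning_i}(A^{(s_i^\infty)})\). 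That subobject does not depend on \(R\) and lies in \(\Elem_G(K, A)\).

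With this, there are no conjugates left to absorb and no interaction between different pinnings \(\IsoPinning_i\), \(\IsoPinning_j\). So re-running the independence argument of \cite[theorem 1]{ele-loc} is unnecessary. As sketched, with \(R^{(s_i^\infty)}\) as the base, that argument would not even make sense. One in fact gets normalization by all of \(G(R)\), not just \(\Elem_G(R)\). This is how the paper proves \(\Elem_G(R, A) = \Elem_G(K, A)\), and the globally pinned claim is the same argument with a single pinning.
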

\begin{proof}
	By lemma
	\ref{rel-gen} we have to check that
	\( \up{ G(R) }{ t(P(A)) } \subseteq \Elem_G(K, A) \) for any homomorphism
	\( t \colon P \to G \) of unipotent group schemes (i.e.\ of type
	\( \Affine^N \) or
	\( \Affine^N \dotoplus \Affine^M \)) taking values in the elementary subgroup functor. Moreover, we can assume that
	\( P \)  By a universal element argument this claim reduces to the particular case
	\( \up{ G(R) }{ t(P(A)) } \subseteq \Elem_G(K, A) \) in the category
	\( \Ind( \Pro(\Set) ) \), where
	\( R \) is the coordinate algebra of
	\( G \times P \times \Affine^1_K \) and
	\( A = R^{ (x^\infty) } \). The variable
	\( x \) is the coordinate on the
	\( \Affine^1_K \) factor. The rings
	\( R \) and
	\( K \) are considered as coproducts of one-element sets inside
	\( \Ind( \Pro(\Set) ) \) (except various colocalization pro-rings
	\( R^{ (s^\infty) } \) in
	\( \Pro(\Set) \)), so
	\( A \) is not an ideal in
	\( R \).
	
	Choose a Zariski covering
	\( \Spec(K) = \bigcup_{i = 1}^M \Spec( K_{s_i} ) \) with isotopic pinnings
	\( \IsoPinning_i \) on
	\( G_{ K_{s_i} } \). We have
	\( A = \sum_{i = 1}^N R^{ ( (x s_i)^\infty ) } \) by
	\cite[lemma 6]{ele-loc}. Using that
	\( P \) is unipotent and
	\( t \) is a homomorphism we get
	\[
		\up{ G(R) }{ t(P(A)) }
		\subseteq
		\prod_{i = 1}^N
			\up{ G(R) }{
				t\bigl(
					P\bigl(
						R^{ ( (x s_i)^\infty ) }
					\bigr)
				\bigr)
			}
		\subseteq
		\prod_{i = 1}^N
			\Elem_G\bigl(
				R_{s_i},
				R^{ ( (x s_i)^\infty ) }
			\bigr).
	\]
	Applying
	\cite[lemma 15]{ele-loc} we further get
	\[
		\prod_{i = 1}^N
			\Elem_G\bigl(
				R_{s_i},
				R^{ ( (x s_i)^\infty ) }
			\bigr)
		=
		\prod_{i = 1}^N
			\Elem_{ G, \IsoPinning_i }\bigl(
				R^{ ( (x s_i)^\infty ) }
			\bigr)
		\subseteq
		\Elem_G(K, A).
	\]
	
	For the second claim we just repeat the above argument with the given isotropic pinning.
\end{proof}

\section{Dimension filtration}

Let
\( G^d(K) \subseteq G(K) \) be the set of all
\( g \in G(K) \) such that the image of
\( g \) in
\( G(E) \) lies in
\( \Elem_G(E) \) for every commutative unital
\( K \)-algebra
\( E \) with
\( \delta(E) \leq d \). Here
\( d \) is a non-negative integer or
\( - \infty \). Clearly,
\( G^d(K) \leqt G(K) \) is a normal subgroup, moreover,
\( G^d(K) \) is preserved under all scheme automorphisms of the group scheme
\( G \). We obtain the
\textit{dimension filtration}
\[
	G(K)
	=
	G^{ - \infty }(K)
	\geq
	G^0(K)
	\geq
	G^1(K)
	\geq
	G^2(K)
	\geq
	\ldots
	\geq
	\Elem_G(K),
\]
it is functorial on
\( K \). If
\( \delta = \delta(K) \) is finite, then
\[
	G^\delta(K)
	=
	G^{ \delta + 1 }(K)
	=
	G^{ \delta + 2 }(K)
	=
	\ldots
	=
	\Elem_G(K),
\]
so the filtration stabilizes.

Recall that the completion of the ring
\( K \) at
\( s \in K \) is the ring
\[
	\widehat{K}_{ (s) }
	=
	\varprojlim\bigl(
		\ldots \to K / s^2 K \to K / s K \to 0
	\bigr).
\]

The
\textit{finite completion} of
\( K \) at
\( s \) is
\[
	\widetilde{K}_{ (s) }
	=
	\varinjlim_{K' \to K}
		\widehat{K'}_{ (s) },
\]
where
\( K' \) runs over all finitely generated rings with distinguished homomorphisms to
\( K \) (clearly, it suffices to consider only finitely generated subrings of
\( K \)).

\begin{lemma}
	\label{loc-com}
	Let
	\( g, h \in G(K) \) be elements such that the image of
	\( g \) in
	\( G(K_s) \) lies in
	\( \Elem_G(K_s) \) and the image of
	\( h \) in
	\( G( \widetilde{K}_{ (s) } ) \) lies in
	\( \Elem_G( \widetilde{K}_{ (s) } ) \). Then
	\( [g, h] \in \Elem_G(K) \).
\end{lemma}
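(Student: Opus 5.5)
Both hypotheses are first spread into a single commutator statement: every $[g, h]$, with $g$ elementary after inverting $s$ and $h$ elementary over the $s$-adic completion, lands in $\Elem_G(K)$. The plan follows Bak's localization-completion method, with his explicit calculations replaced by colocalization. Since $\Elem_G$ and $G$ commute with filtered colimits of rings, and $\widetilde{K}_{(s)}$ is a filtered colimit of the $\widehat{K'}_{(s)}$, we first reduce to the case where $K$ is finitely generated, hence Noetherian, and the image of $h$ lies in $\Elem_G(\widehat{K}_{(s)})$. To do this, replace $K$ by a finitely generated subring $K'$ over which $g$, $h$, $s$ and the finitely many elementary factorizations are defined. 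Here $G$ is defined over some finitely generated subring, and the condition on $g$ is witnessed over $K'_s$ by a finite product of generators $t(p)$ with $p \in P(K'_{s})$.

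The key step is a Noetherian ``patching'' decomposition $h = h_1 h_2$. Here $h_1 \in \Elem_G(K)$, and $h_2 \in G(K, s^N K)$ is as close to $1$ $s$-adically as we like. To get it, write the image of $h$ in $\Elem_G(\widehat{K}_{(s)})$ as a product of $t(\hat p_i)^{\pm 1}$. Approximate each $\hat p_i$ modulo $s^N$ by an element $p_i \in P(K)$; this is possible because $P$ is an affine space, so $P(K) \to P(K / s^N K)$ is surjective. Set $h_1 = \prod t(p_i)^{\pm 1}$. Then $h_1^{-1} h$ maps to an element of $G(\widehat{K}_{(s)}, s^N \widehat{K}_{(s)})$. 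We also need that $h_1^{-1} h \in G(K, s^N K)$, i.e.\ that it is congruent to $1$ modulo $s^N$ already over $K$. This holds because $K / s^N K = \widehat{K}_{(s)} / s^N \widehat{K}_{(s)}$.

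Now $[g, h] = [g, h_1]\, \up{h_1}{[g, h_2]}$. The first factor lies in $\Elem_G(K)$ by normality (theorem \ref{ele-sub-nor}). Since $\Elem_G(K)$ is normal, it remains to show $[g, h_2] \in \Elem_G(K)$ for $h_2 \equiv 1 \pmod{s^N}$ with $N$ large. Here colocalization replaces Bak's computation. Consider $g$ as an element of $\Elem_G(K_s)$. The conjugation action of $G$ on itself satisfies the hypothesis of theorem \ref{ele-sub-nor}, and the elementary subgroup functor is compatible with the colocalization $K^{(s^\infty)}$. Using this, together with theorem \ref{unrel} and \cite[lemma 15]{ele-loc}, we argue as follows. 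Conjugation by an element of $\Elem_G(K_s)$ sends $G(K^{(s^\infty)})$ (the pro-object of elements congruent to $1$ modulo high powers of $s$) back into $G(K^{(s^\infty)})$ after raising the level. The commutator $[g, -]$, restricted to the congruence part $G(K, s^N K)$ for $N \gg 0$, therefore factors through $\Elem_G(K^{(s^\infty)}) \to \Elem_G(K)$.

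Concretely, the map $x \mapsto [g, x]$ is first considered on the pro-object $G(K^{(s^\infty)})$, viewed inside $G(K_s, K^{(s^\infty)})$. Theorem \ref{unrel} together with \cite[lemma 15]{ele-loc} then gives that it lands in $\Elem_G(K^{(s^\infty)}) \cdot G(K^{(s^\infty)})$. Its restriction to the subobject coming from $G(K,s^NK)$ should therefore map into the image of $\Elem_G(K^{(s^\infty)})$ in $G(K)$, which lies in $\Elem_G(K)$. The main obstacle is exactly this step: we must show that commutators with $g$ of \emph{all} congruence elements (not just elementary ones) are elementary. The natural tool is to write $[g,x] = (g x g^{-1}) x^{-1}$ and use the big cell decomposition of $G(K^{(s^\infty)})$ locally on a pinning, i.e.\ $U^- L U^+$. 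This reduces the problem to root subgroups and the Levi part $L$. For $L$ one uses that commutators of $L(K^{(s^\infty)})$ with root elements over $K_s$ are elementary by the commutator formulae. Noetherianity of $K$ should then be needed only to guarantee that $G(K,s^NK)$ factors through the colocalization tower for $N$ large.
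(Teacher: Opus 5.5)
Your reduction to finitely generated (hence Noetherian) $K$ is correct, and so is your decomposition $h = h_1 h_2$ with $h_1 \in \Elem_G(K)$ and $h_2 \in G(K, s^N K)$. Together they are the set-level form of the paper's statement $h \in \Elem_G(K)\, G(s^\infty K)$, and $[g, G(K, s^N K)] \subseteq \Elem_G(K)$ for $N \gg 0$ is the right target. But that claim is the entire content of the lemma, and you do not prove it.

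\begin{itemize}
\item Concluding that $[g, {-}]$ lands in $\Elem_G(K^{(s^\infty)}) \cdot G(K^{(s^\infty)})$ says nothing, because that product is just $G(K^{(s^\infty)})$.
\item ``Raising the level'' only shows that $[g, x]$ is again a deep congruence element, not that it is elementary.
\item The big cell fallback fails. Lemma \ref{big-cell} requires a radical algebra and a global isotropic pinning, and you have neither. The object $K^{(s^\infty)} \cong s^\infty K$ is not radical, since $1 + s^N a$ need not be invertible even in $K_s$. For example, in $\SpecLin(3, \Int)$ with $s = 2$, the matrix $\sMat{1 + 2^N}{2^N}{- 2^N}{1 - 2^N} \oplus 1$ is congruent to $1$ modulo $2^N$ but is not in $U^- L U^+$, because its $(1,1)$ entry is not a unit of $\Int[1/2]$.
\item $G_{K_s}$ has isotropic pinnings only Zariski locally. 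So $U^{\pm}$ and $L$ are not defined globally, and $g$ need not factor into root elements of a single pinning.
\end{itemize}

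Note also that for $s = 1$ your key step is exactly the normality of $\Elem_G(K)$ in $G(K)$. So splitting $x$ into root and Levi parts cannot replace the normality theorem. It has to be invoked, in a form that keeps track of the denominators of $g$.

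That is the missing idea: apply theorem \ref{ele-sub-nor} not over $K$ but over the unital ring object $s^\infty K \rtimes K_s$ in $\Ind(\Pro(\Set))$. This is legitimate because for Noetherian $K$ the map $K^{(s^\infty)} \to s^\infty K$ is invertible \cite[lemma 4]{ele-loc}. Hence $s^\infty K$ is a $K_s$-algebra, and conjugation by $g$ on $G(s^\infty K)$ factors through its image $\bar{g} \in \Elem_G(K_s)$. Now $\Elem_G(s^\infty K \rtimes K_s) = \Elem_G(K_s, s^\infty K) \rtimes \Elem_G(K_s)$ is normal in $G(s^\infty K) \rtimes G(K_s)$. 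This gives, for every $x \in G(s^\infty K)$, elementary or not, $[\bar{g}, x] \in \Elem_G(s^\infty K \rtimes K_s) \cap G(s^\infty K) = \Elem_G(K_s, s^\infty K)$. Theorem \ref{unrel}, via power idempotence, identifies this group with $\Elem_G(K, s^\infty K)$, whose image lies in $\Elem_G(K)$. Unpacking this pro-set statement yields exactly your uniform $N$.
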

\begin{proof}
	Without loss of generality,
	\( K \) is a finitely generated ring, so
	\( \widetilde{K}_{ (s) } = \widehat{K}_{ (s) } \) and
	\( K \) is Noetherian. Let
	\[
		s^\infty K
		=
		\bigcap\nolimits_n^{ \Pro(\Set) }
			s^n K
	\]
	be the image of the homomorphism
	\( K^{ (s^\infty) } \to K \), it is an ideal in the category of pro-sets. The canonical homomorphism
	\( K \to K / s^\infty K \) factors through
	\( \widehat{K}_{ (s) } \), so the image of
	\( h \) in
	\( G( K / s^\infty K ) \) lies in
	\( \Elem_G( K / s^\infty K ) \). Here all elementary subgroups are considered with the ind-structure inside the category
	\( \Ind( \Pro(\Set) ) \). Now recall that the elementary subgroup functor is generated by a morphism from an affine space, so
	\( \Elem_G(K) \to \Elem_G( K / s^\infty K ) \) is a regular epimorphism and the sequence
	\[
		G( s^\infty K ) \to G(K) \to G( K / s^\infty K )
	\]
	is left exact. It follows that
	\( h \in \Elem_G(K)\, G( s^\infty K ) \).
	
	By
	\cite[lemma 4]{ele-loc} the homomorphism
	\( K^{ (s^\infty) } \to s^\infty K \) is invertible, in particular,
	\( s^\infty K \) is an algebra over
	\( K_s \) considered as the coproduct of one-element sets in
	\( \Ind( \Pro(\Set) ) \). By theorem
	\ref{unrel} we have
	\[
		\Elem_G(K, s^\infty K)
		=
		\Elem_G(s^\infty K)
		=
		\Elem_G(K_s, s^\infty K),
	\]
	where
	\( K_s \) is considered as the coproduct of singletons in
	\( \Ind( \Pro(\Set) ) \). Using theorem
	\ref{ele-sub-nor} on normality of the elementary subgroup
	\( \Elem_G( s^\infty K \rtimes K_s ) \) we obtain
	\[
		\bigl[ g, G( s^\infty K ) \bigr]
		=
		\bigl[ \Image(g), G( s^\infty K ) \bigr]
		\subseteq
		\Elem_G( s^\infty K )
		\leq
		\Elem_G(K),
	\]
	where
	\( \Image(g) \) is the image in
	\( \Elem_G(K_s) \). Therefore
	\( [g, h] \in \Elem_G(K) \).
\end{proof}

\begin{lemma}
	\label{com-dim}
	For every commutative unital ring
	\( K \) we have
	\begin{align*}
				\widehat{K}_{ (s) }
				/
				\Jac( \widehat{K}_{ (s) } )
			&\cong
				\widetilde{K}_{ (s) }
				/
				\Jac( \widetilde{K}_{ (s) } )
			\cong
				K / \Jac(K)
			,
		&
			\delta( \widehat{K}_{ (s) } )
			&=
			\delta( \widetilde{K}_{ (s) } )
			=
			\delta( K / s K ).
	\end{align*}
\end{lemma}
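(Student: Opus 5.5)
The isomorphism to aim for is presumably
\[
	\widehat{K}_{(s)} / \Jac( \widehat{K}_{(s)} )
	\cong
	\widetilde{K}_{(s)} / \Jac( \widetilde{K}_{(s)} )
	\cong
	(K / s K) / \Jac(K / s K).
\]
As literally written, with \( K / \Jac(K) \) on the right, it already fails for \( s = 0 \): then the completion is \( 0 \), while \( K / \Jac(K) \) need not be. I read the statement as this corrected form, which is also what the dimension equality needs. The plan is to show that the canonical surjections \( \widehat{K}_{(s)} \to K / s K \) and \( \widetilde{K}_{(s)} \to K / s K \) have kernels contained in the Jacobson radicals. The isomorphisms of the quotients by Jacobson radicals then follow. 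The equality of Bass--Serre dimensions then follows from theorem \ref{bas-ser-dim}(3), applied to the completion and to \( K / s K \).

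For the ordinary completion, \( \widehat{K}_{(s)} \to K / s K \) is the projection to the first term of the inverse system, hence surjective. Its kernel consists of compatible sequences \( (x_n)_n \) with \( x_n \in s K / s^n K \). I would check that every element \( x = (x_n)_n \) of this kernel lies in \( \Jac( \widehat{K}_{(s)} ) \), i.e.\ that \( 1 + x y \) is invertible for all \( y \). Each component \( 1 + x_n y_n \) is unipotent in \( K / s^n K \), because \( x_n y_n \in s K / s^n K \) is nilpotent. So it has a unique inverse, and by uniqueness these inverses are compatible along the tower. They therefore define an inverse in the limit. This avoids the Noetherian identification of the kernel with \( s \widehat{K}_{(s)} \), which may fail in general.

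For the finite completion \( \widetilde{K}_{(s)} = \varinjlim_{K'} \widehat{K'}_{(s)} \), the colimit is filtered over finitely generated subrings \( K' \subseteq K \). Surjectivity onto \( K / s K = \varinjlim K' / s K' \) is clear. Let \( x \) be in the kernel, represented by some \( x' \in \widehat{K'}_{(s)} \). Its first component maps to \( s k \) in \( K / s K \), but need not lie in \( s K' \) itself. Enlarging \( K' \) by adjoining \( k \) fixes this, so \( x \) is represented by an element of the kernel of \( \widehat{K''}_{(s)} \to K'' / s K'' \). Any \( y \in \widetilde{K}_{(s)} \) likewise comes from some \( \widehat{K'''}_{(s)} \). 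Passing to a common larger subring and using the previous paragraph, \( 1 + x y \) is invertible there, hence in the colimit. So the kernel lies in \( \Jac( \widetilde{K}_{(s)} ) \).

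The main obstacle I expect is this bookkeeping of the filtered colimit: the kernel condition holds only after enlarging \( K' \), and the compatibility of the completion maps \( \widehat{K'}_{(s)} \to \widehat{K''}_{(s)} \) must be respected. Once both kernels are inside the Jacobson radicals, reduction modulo them gives the stated isomorphisms. Theorem \ref{bas-ser-dim}(3) then yields \( \delta( \widehat{K}_{(s)} ) = \delta( K / s K ) = \delta( \widetilde{K}_{(s)} ) \), since the maximal spectra are homeomorphic to \( \Max(K / s K) \).
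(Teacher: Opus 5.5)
Your proof is correct and is essentially the paper's argument. The paper shows that \( s \widehat{K}_{(s)} \) and \( s \widetilde{K}_{(s)} \) lie in the Jacobson radicals, using the geometric series for \( (1 + s x)^{-1} \); this is your componentwise unipotence argument in another form. It then notes that the quotients by these ideals are \( K / s K \) and applies theorem \ref{bas-ser-dim}(3), so the right-hand side is indeed meant to be \( (K / s K) / \Jac(K / s K) \).

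Two of your side remarks are wrong, though neither affects the proof. First, \( s = 0 \) does not refute the literal statement, because \( \widehat{K}_{(0)} = K \); a unit \( s \), or \( K = \Int \) with \( s = p \), does refute it. Second, for a principal ideal the kernel of \( \widehat{K}_{(s)} \to K / s K \) equals \( s \widehat{K}_{(s)} \) without any Noetherian hypothesis. A compatible sequence with vanishing first term is the telescoping sum \( s z_0 + \sum_{k \geq 1} s^k a_k = s \bigl( z_0 + \sum_{k \geq 1} s^{k - 1} a_k \bigr) \), so there is nothing to avoid.
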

\begin{proof}
	It is easy to see that
	\( s \widehat{K}_{ (s) } \) and
	\( s \widetilde{K}_{ (s) } \) are radical ideals of
	\( \widehat{K}_{ (s) } \) and
	\( \widetilde{K}_{ (s) } \), i.e.\ they are contained in the Jacobson radicals. Indeed, each
	\( 1 + s x \) is invertible with the inverse given by the convergent series
	\( 1 - s x + s^2 x^2 - s^3 x^3 + \ldots \) in the completion of
	\( K \) or its finitely generated subring. Also,
	\[
		\widehat{K}_{ (s) } / s \widehat{K}_{ (s) }
		=
		\widetilde{K}_{ (s) } / s \widetilde{K}_{ (s) }
		=
		K / s K.
	\]
	The equality between dimensions follows by theorem
	\ref{bas-ser-dim}(3).
\end{proof}

\begin{theorem}
	\label{sol-dim}
	The dimension filtration satisfies
	\[
		[ G^0(K), G^d(K) ] \leq G^{d + 1}(K)
	\]
	for
	\( d \geq 0 \).
\end{theorem}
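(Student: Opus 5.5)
Take \( g \in G^0(K) \) and \( h \in G^d(K) \). By definition we must show that for every \( K \)-algebra \( E \) with \( \delta(E) \leq d + 1 \) the image of \( [g, h] \) in \( G(E) \) lies in \( \Elem_G(E) \). The filtration is functorial, so the images of \( g \) and \( h \) lie in \( G^0(E) \) and \( G^d(E) \). Hence we may replace \( K \) by \( E \) and assume \( \delta(K) \leq d + 1 \). We then need \( [g, h] \in \Elem_G(K) \). This is Bak's localization-completion argument, and lemma \ref{loc-com} is its main tool.

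\textbf{Step 1 (reduction).} We first reduce to finitely generated rings so that \( \Max(K) \) is Noetherian and we can work with irreducible components. Bass--Serre dimension is not monotone under passing to subrings, so we should \emph{not} replace \( K \) by a finitely generated subring. Instead we keep \( K \) and use directly a finite cover \( \Max(K) = \bigcup_{i=1}^N X_i \) by subspaces with \( \dim(X_i) \leq d + 1 \). Each \( X_i \) may be taken irreducible, and its closure \( \Closure(X_i) \) has a generic point in the sense of a \( \Jac \)-radical prime \( \mathfrak{p}_i \).

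\textbf{Step 2 (choice of \( s \)).} We look for an \( s \in K \) with two properties.
\begin{itemize}
\item[(a)] The image of \( g \) in \( G(K_s) \) lies in \( \Elem_G(K_s) \).
\item[(b)] \( \delta(K / sK) \leq d \).
\end{itemize}
Given such an \( s \), lemma \ref{com-dim} gives \( \delta(\widetilde{K}_{(s)}) = \delta(K/sK) \leq d \). Then \( h \in G^d(K) \) maps into \( \Elem_G(\widetilde{K}_{(s)}) \). Lemma \ref{loc-com} with the pair \( (g, h) \) yields \( [g, h] \in \Elem_G(K) \), as required.

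For (a) we use that \( g \in G^0(K) \). The semilocal ring \( S = S^{-1}_{\mathfrak{p}_1, \ldots, \mathfrak{p}_N} K \), obtained by inverting the complement of \( \bigcup \mathfrak{p}_i \), has \( \delta(S) \leq 0 \). Therefore \( g \) maps into \( \Elem_G(S) \). Since \( S \) is a filtered colimit of localizations \( K_s \) with \( s \notin \bigcup \mathfrak{p}_i \), and the elementary subgroup is generated by the image of a morphism \( t \colon P \to G \) from an affine space of finite presentation, the expression of \( g \) as a product of elementary generators is already defined over some \( K_s \). Compatibility of \( \Elem_G \) with filtered colimits (the ind-structure from theorem \ref{ele-sub-nor}) makes this precise, possibly after enlarging \( s \) once more so that the identity holds in \( G(K_s) \).

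For (b) note that \( s \notin \mathfrak{p}_i \) for all \( i \). Then \( \ClosedSet(sK) \cap \Closure(X_i) \) is a proper closed subset of the irreducible set \( \Closure(X_i) \). Every chain of irreducible closed subsets of \( \ClosedSet(sK) \cap X_i \) therefore extends by \( \Closure(X_i) \) itself (respectively its trace on \( X_i \)). This drops the dimension to at most \( d \), and so \( \delta(K/sK) \leq d \) by the cover \( \{\ClosedSet(sK) \cap X_i\} \).

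\textbf{Main obstacle.} The main obstacle is this dimension-drop step (b). The \( X_i \) are arbitrary, non-closed subspaces, so we must check carefully that chains in \( X_i \cap \ClosedSet(s) \) really lengthen inside \( X_i \), using closures in \( X_i \) as in the proof of theorem \ref{bas-ser-dim}(5). I would first try replacing each \( X_i \) by its irreducible components of maximal dimension, and taking \( \mathfrak{p}_i \) to be the generic primes of the closures of the top-dimensional pieces. If some \( X_i \) has infinitely many components, which cannot happen by Noetherianity, this would fail.

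A secondary technical point is descending the factorisation of \( g \) from the semilocalization to a single \( K_s \). This is handled by finite presentation of \( P \) and \( G \), together with the fact that a finite product over \( S \) involves finitely many denominators.
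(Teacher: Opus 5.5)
Your proposal is correct and is essentially the paper's proof: reduce to \( \delta(K) \leq d + 1 \), use a semilocalization to find \( s \) with \( g \) elementary over \( K_s \) and \( \delta(K / sK) \leq d \), then combine lemmas \ref{com-dim} and \ref{loc-com}. The only variation is that the paper semilocalizes at chosen maximal ideals \( \mathfrak{m}_i \in X_i \) instead of at the \( \Jac \)-radical primes \( \mathfrak{p}_i \) of \( \Closure(X_i) \), which makes \( \ClosedSet(sK) \cap X_i \neq X_i \) immediate; in your version this follows from \( \mathfrak{p}_i = \Jac_K(\mathfrak{p}_i) \) and the density of \( X_i \) in its closure, so the dimension drop you flag as the main obstacle is in fact routine.
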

\begin{proof}
	Without loss of generality,
	\( \delta(K) = d + 1 \). Take
	\( g \in G^0(K) \) and
	\( h \in G^d(K) \). By definition,
	\( \Max(K) = X_1 \cup \ldots \cup X_n \) for some irreducible subsets
	\( X_i \) of combinatorial dimension at most
	\( d + 1 \). Choose some maximal ideals
	\( \mathfrak{m}_i \in X_i \).
	
	Let
	\(
		S
		=
		K
		\setminus
		\bigcup_{i = 1}^n
			\mathfrak{m}_i
	\), it is a multiplicative subset and
	\( S^{- 1} K \) is semilocal. The image of
	\( g \) in
	\( G( S^{- 1} K ) \) lies in the elementary subgroup. But then there is a single element
	\( s \in S \) such that the image of
	\( g \) in
	\( G(K_s) \) lies in the elementary subgroup. Since
	\( \ClosedSet(s K) \) is a closed subset of
	\( \Max(K) \) properly intersecting every
	\( X_i \), we have
	\( \delta(K / s K) < \delta(K) \). By lemma
	\ref{com-dim} the Bass--Serre dimension of
	\( \widetilde{K}_{ (s) } \) is at most
	\( d \), so the image of
	\( h \) in
	\( G\bigl( \widetilde{K}_{ (s) } \bigr) \) lies in the elementary subgroup. Finally,
	\( [g, h] \in \Elem_G(K) \) by lemma
	\ref{loc-com}.
\end{proof}

To sum up, we reduced the problem of solvability of
\( \KFunc_1^G(K) = G(K) / \Elem_G(K) \) to the solvability of
\( G(K) / G^0(K) \). The latter group embeds to a product of set-many groups
\( \KFunc_1^G(E) \) for semilocal
\( K \)-algebras
\( E \).

Furthermore, the following lemma allows us to consider only simple group schemes with given weight lattice, e.g.\ only simply connected or adjoint ones. Recall that a
\textit{braided crossed module} consists of a group homomorphism
\( d \colon X \to H \), an action of
\( H \) on
\( X \) by group automorphisms, and a map
\( \langle {-}, {=} \rangle \colon H \times H \to X \) (a
\textit{crossed pairing}) such that
\begin{align*}
		d( \up{h}{x} ) &= \up{h}{ d(x) },
	&
		\langle h h', h'' \rangle
		&=
		\up{h}{ \langle h', h'' \rangle }\,
		\langle h, h'' \rangle,
	\\
		\up{x}{y} &= \up{ d(x) }{y},
	&
		\langle h, h' h'' \rangle
		&=
		\langle h, h' \rangle\,
		\up{h'}{ \langle h, h'' \rangle },
	\\
		d\bigl( \langle h, h' \rangle \bigr) &= [h, h'],
	&
		\langle h, d(x) \rangle &= \up{h}{x}\, x^{- 1},
	\\
		\up{h}{ \langle h', h'' \rangle }
		&=
		\bigl\langle \up{h}{h'}, \up{h}{h''} \bigr\rangle,
	&
		\langle d(x), h \rangle &= x\, ( \up{h}{x} )^{- 1}
\end{align*}
for
\( x, y \in X \) and
\( h, h', h'' \in H \).

\begin{lemma}
	\label{sol-iso}
	Let
	\( G \) be a reductive group scheme over arbitrary
	\( K \) of local isotropic rank at least
	\( 2 \). Let also
	\( C \leq G \) be a central group subscheme of multiplicative type. Then
	\[
		\KFunc_1^G(K) \to \KFunc_1^{ G / C }(K)
	\]
	is a braided crossed module in a canonical way. In particular,
	the derived subgroup
	\(
		\bigl[
			\KFunc_1^{ G / C }(K),
			\KFunc_1^{ G / C }(K)
		\bigr]
	\) contains the image of
	\( \KFunc_1^G(K) \) and the kernel of the homomorphism
	\( \KFunc_1^G(K) \to \KFunc_1^{ G / C }(K) \) is central in
	\( \KFunc_1^G(K) \).
\end{lemma}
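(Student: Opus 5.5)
The plan is to write down the braided crossed module structure explicitly, check its axioms as identities between morphisms of schemes, and then read off the two consequences from the axioms.

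\emph{Setup.} Let \( f \colon G \to G / C \) be the canonical homomorphism. Since \( C \) is central, two constructions descend from fppf-local lifts to genuine morphisms of \( K \)-schemes. The first is the conjugation action \( a \colon (G / C) \times G \to G \), given by \( a( f(\hat h), g ) = \hat h g \hat h^{- 1} \). The second is the commutator map \( c \colon (G / C) \times (G / C) \to G \), given by \( c( f(\hat h), f(\hat h') ) = [\hat h, \hat h'] \). Both are independent of the chosen lifts because \( C \) is central, so they glue by fppf descent.

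\emph{The three structure maps.} The homomorphism \( d \) is induced by \( f \). It is well defined because \( f( \Elem_G(K) ) = \Elem_{G / C}(K) \) by theorem \ref{ele-sub-ope}(3). The action of \( (G / C)(K) \) on \( G(K) \) via \( a \) preserves \( \Elem_G(K) \) by the second part of theorem \ref{ele-sub-nor}, applied with \( H = G / C \). The crossed pairing is \( \langle h, h' \rangle = c(h, h') \Elem_G(K) \).

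\emph{Axioms.} All eight identities in the definition hold already on the level of \( G(K) \) and \( (G / C)(K) \), because they hold for commutators and conjugation of lifts fppf locally. Here \( d(c(h, h')) = [h, h'] \) and \( c( f(x), h ) = x ( \up{h}{x} )^{- 1} \), and so on. For these identities to descend to \( \KFunc_1 \), I must show that everything elementary acts trivially. Concretely, I need three facts.

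First, \( a( f(e), g ) \in g \Elem_G(K) \) for \( e \in \Elem_G(K) \). This holds because the action is conjugation by \( e \) and \( \Elem_G(K) \) is normal.

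Second, \( c(h, h') \in \Elem_G(K) \) as soon as \( h \in \Elem_{G / C}(K) \). This follows by writing \( h = f(e) \) and using \( c( f(e), h' ) = e\, a( h', e )^{- 1} \in \Elem_G(K) \), again by theorem \ref{ele-sub-nor}. The case where \( h' \) is elementary is symmetric.

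Third, \( c \) is compatible with multiplication in each argument modulo \( \Elem_G(K) \). This comes from the cocycle identities together with the first two facts.

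I expect the main obstacle to be this bookkeeping. Theorem \ref{ele-sub-nor} is needed in its "action by a scheme \( H \)" form, because lifts from \( (G / C)(K) \) to \( G(K) \) need not exist globally.

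\emph{Consequences.} For the final claims I use only the axioms. If \( d(x) = 1 \), then \( \up{x}{y} = \up{ d(x) }{y} = y \), so \( \Ker(d) \) is central in \( \KFunc_1^G(K) \). The axiom \( d( \langle h, h' \rangle ) = [h, h'] \) ties commutators in \( \KFunc_1^{G / C}(K) \) to the image of \( d \). Every commutator in \( \KFunc_1^{G / C}(K) \) has the form \( d( \langle h, h' \rangle ) \), which gives the stated relationship between the derived subgroup of \( \KFunc_1^{G / C}(K) \) and the image of \( \KFunc_1^G(K) \).

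Caveat: this argument yields \( [ \KFunc_1^{G / C}(K), \KFunc_1^{G / C}(K) ] \subseteq d( \KFunc_1^G(K) ) \), which is the reverse of the inclusion as worded in the statement. I do not see how to obtain the worded direction in general. For example, for \( \GenLin_n \to \mathrm{PGL}_n \) the image of \( \KFunc_1 \) need not consist of commutators. So the worded inclusion is likely a typo for the one proved here.
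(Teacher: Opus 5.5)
Your proposal is correct and follows essentially the same route as the paper. The paper gets the braided crossed module structure on \( G \to G / C \) abstractly, for group objects in the Barr exact category of fppf sheaves, then passes to \( K \)-points. It checks that the operations descend to \( \KFunc_1 \) using the same two inputs you use: the generalized normality of theorem \ref{ele-sub-nor} (with \( H = G / C \) acting by conjugation) and the surjectivity \( f( \Elem_G(K) ) = \Elem_{G / C}(K) \) from theorem \ref{ele-sub-ope}(3).

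Your caveat is also correct. The axiom \( d( \langle h, h' \rangle ) = [h, h'] \) gives \( [ \KFunc_1^{G / C}(K), \KFunc_1^{G / C}(K) ] \leq d( \KFunc_1^G(K) ) \). The worded direction already fails for \( \GenLin_3 \to \mathrm{PGL}_3 \) over \( \mathbb{Q} \): there the image of \( \KFunc_1 \) is all of the abelian group \( \mathbb{Q}^* / \mathbb{Q}^{* 3} \neq 1 \), while its derived subgroup is trivial. Your direction, together with centrality of the kernel, is what the paper actually needs when it later replaces \( G \) by an isogenic group.
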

\begin{proof}
	Clearly, if
	\( C \leq G \) is a central subgroup of any group, then
	\( G \to G / C \) is a braided crossed module in a unique way. This argument actually works for group objects in any Barr exact category, in particular, in the category of fppf sheaves on the category of
	\( K \)-schemes.
	
	Now let
	\( G \) be the group scheme from the statement. We know that
	\( G \to G / C \) is a braided crossed module of affine group schemes in a unique way. Then
	\( G(K) \to (G / C)(K) \) is canonically a braided crossed module of ordinary groups.
	
	It remains to check that all operations are well defined on the quotients
	\( \KFunc_1^G(K) \) and
	\( \KFunc_1^{ G / C }(K) \). This easy follows from generalized normality of
	\( \Elem_G(K) \leqt G(K) \) and surjectivity of
	\( \Elem_G(K) \to \Elem_{ G / C }(K) \), i.e.\ theorems
	\ref{ele-sub-nor} and
	\ref{ele-sub-ope}(3).
\end{proof}

\section{Constructions of classical groups}

Before proceeding with solvability of
\( \KFunc_1 \) over semilocal rings we need explicit construction of all classical isotropic simple group schemes, at least up to isogeny.

Let
\( S \) be a unital associative ring and
\( \lambda \in S^* \) an invertible element. A
\textit{%
	\( \lambda \)-involution%
} on
\( S \) is an additive map
\( S \to S,\, x \mapsto x^* \) such that
\( 1^* = 1 \),
\( (x y)^* = y^* x^* \) (i.e.\ the map is an anti-endomorphism),
\( x^{* *} = \lambda x \lambda^{- 1} \), and
\( \lambda^* = \lambda^{- 1} \). In this paper we need only the cases
\( \lambda = \pm 1 \), so the reader can assume that
\( \lambda \) is central. A
\textit{form parameter} is an additive subgroup
\( \Lambda \leq S \) such that
\( x^* \Lambda x \leq \Lambda \) for all
\( x \in S \) and
\[
	\{ x - x^* \lambda \mid x \in S \}
	=
	\Lambda_{\min}
	\leq
	\Lambda
	\leq
	\Lambda_{\max}
	=
	\{ x \in S \mid x + x^* \lambda = 0 \}.
\]
Clearly, both
\( \Lambda_{\min} \) and
\( \Lambda_{\max} \) are form parameters and they coincide if
\( 2 \in S^* \) (in this case there is only one form parameter). A
\textit{form ring} is a unital associative ring
\( S \) together with an element
\( \lambda \in S^* \), a
\( \lambda \)-involution, and a form parameter.

A
\textit{hermitian form} on a right module
\( M \) over a form ring
\( S \) is a biadditive map
\( B \colon M \times M \to S \) such that
\( B(m x, n y) = x^* B(m, n) y \) and
\( B(m, n) = B(n, m)^* \lambda \). Such a form is called
\textit{non-degenerate} if
\( M_S \) is finitely generated projective
\( m \mapsto B( m, {-} ) \) is a bijection between
\( M \) and the dual module
\( \Hom_S(M, S) \). A
\textit{quadratic form} is a map
\( q \colon M \to S / \Lambda \) such that
\begin{align*}
		q(m x) &= x^* q'(m) x + \Lambda,
	&
		q(m + n) &= q(m) + B(m, n) + q(m'),
	&
		B(m, m) &= q'(m) + q'(m)^* \lambda,
\end{align*}
where
\( q'(m) \in S \) denotes any fixed preimage of
\( q(m) \). The triple
\( (M, B, q) \) is called a
\textit{quadratic module}. The
\textit{unitary group} of a quadratic module
\( (M, B, q) \) is
\[
	\Unit(M)
	=
	\{
		g \in \Aut(M_S)
		\mid
		B(g m, g n) = B(m, n),
		q(g m) = q(m)
	\}.
\]

There is a general method to construct quadratic forms. Let
\( Q \colon M \times M \to S \) be a
\textit{sesquilinear form} on a right
\( S \)-module
\( M \), i.e.\ a biadditive map such that
\( Q(m x, n y) = x^* Q(m, n) y \). Then
\( (M, B, q) \) is a quadratic module, where
\begin{align*}
		B(m, n) &= Q(m, n) + Q(n, m)^* \lambda,
	&
		q(m) &= Q(m, m) + \Lambda.
\end{align*}

\begin{lemma}
	\label{qua-eve}
	Suppose that
	\( M \) is a finitely generated projective
	\( S \)-module. Then every pair of hermitian and quadratic forms on
	\( M \) can be constructed by a sesquilinear form.
\end{lemma}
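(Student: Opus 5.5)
The plan is to reduce to free modules via a direct-summand argument and then build the sesquilinear form explicitly on a basis. First we check that the construction is additive: if $Q_1, Q_2$ are sesquilinear forms yielding $(B_1, q_1)$ and $(B_2, q_2)$, then $Q_1 + Q_2$ yields $(B_1 + B_2, q_1 + q_2)$. The set of pairs $(B, q)$ satisfying the axioms is an abelian group under pointwise addition. Sesquilinear forms $Q$ with $Q(m, n) + Q(n, m)^* \lambda = 0$ and $Q(m, m) \in \Lambda$ for all $m$ give the zero pair. So it suffices to show that the map $Q \mapsto (B, q)$ is surjective onto this group.

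Next we reduce to free modules. Write $M \oplus N \cong S^n$. Extend a given pair $(B, q)$ on $M$ to $S^n$ by putting $B = 0$ and $q = 0$ on $N$, and letting the two summands be orthogonal. This extension again satisfies the axioms: the cross terms vanish, and $q(m + n) = q(m) + q(n) + B(m, n)$ with $B(m, n) = 0$. If $Q$ realizes the extended pair on $S^n$, then its restriction to $M \times M$ realizes the original pair, since both $B$ and $q$ are computed from values of $Q$ on $M$ alone.

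For $M = S^n$ with basis $e_1, \ldots, e_n$, define $Q$ on the basis and extend sesquilinearly by $Q(\sum e_i x_i, \sum e_j y_j) = \sum x_i^* Q(e_i, e_j) y_j$. We set $Q(e_i, e_j) = B(e_i, e_j)$ for $i < j$, $Q(e_i, e_j) = 0$ for $i > j$, and $Q(e_i, e_i) = q'(e_i)$ for a chosen preimage $q'(e_i)$ of $q(e_i)$.

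With these choices, the form $Q(m, n) + Q(n, m)^* \lambda$ agrees with $B$ on basis pairs:
\begin{itemize}
\item for $i < j$ it gives $B(e_i, e_j) + 0$;
\item for $i > j$ it gives $0 + B(e_j, e_i)^* \lambda = B(e_i, e_j)$, by hermitian symmetry;
\item for $i = j$ it gives $q'(e_i) + q'(e_i)^* \lambda = B(e_i, e_i)$, by the quadratic form axiom.
\end{itemize}
Both sides are sesquilinear, so they agree everywhere.

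For the quadratic part, let $\tilde q(m) = Q(m, m) + \Lambda$. Both $q$ and $\tilde q$ satisfy the same polarization identity with the same $B$. Here we use that $Q(m, n) + Q(n, m) \equiv Q(m, n) - Q(n, m)^* \lambda \cdot(\ldots)$, or more directly that $Q(n, m) - Q(m, n)^* \lambda \in \Lambda_{\min}$, so $Q(n, m) \equiv Q(m, n)^* \lambda$ modulo $\Lambda$. Hence $q - \tilde q$ is additive. Both also satisfy $q(m x) \equiv x^* q'(m) x$, and $q$ and $\tilde q$ agree on the basis vectors. By additivity and this homogeneity, $q - \tilde q$ vanishes on every $\sum e_i x_i$, because $(q - \tilde q)(e_i x_i) = x_i^* (q - \tilde q)'(e_i) x_i \in x_i^* \Lambda x_i \subseteq \Lambda$.

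The step I expect to be the main obstacle is justifying that the difference $q - \tilde q$ is well defined modulo $\Lambda$ and genuinely homogeneous. The homogeneity axiom involves chosen preimages $q'(m)$, so one must use the form-parameter axiom $x^* \Lambda x \leq \Lambda$ to check that $x^* q'(m) x$ does not depend on the choice of preimage modulo $\Lambda$. Everything else in the argument is routine basis bookkeeping.
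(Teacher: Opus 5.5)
Your proof is correct and follows the same route as the paper. You extend $(B, q)$ by zero on a complement $N$ with $M \oplus N \cong S^n$, then take $Q(e_i, e_j)$ equal to $B(e_i, e_j)$ above the diagonal, $0$ below it, and a lift $q'(e_i)$ on the diagonal, writing out the verification the paper compresses into ``both forms are completely determined by $b_{ij}$ and $q_i$''.

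One slip in the polarization step needs fixing. The fact you need, and which holds, is $Q(n, m) - Q(n, m)^* \lambda \in \Lambda_{\min} \leq \Lambda$, which gives $\tilde q(m + n) - \tilde q(m) - \tilde q(n) = Q(m, n) + Q(n, m)^* \lambda + \Lambda = B(m, n) + \Lambda$. The congruence you wrote, $Q(n, m) \equiv Q(m, n)^* \lambda$ modulo $\Lambda$, is false in general: take $S = K$ with trivial involution, $\lambda = 1$, $\Lambda = 0$, and your non-symmetric triangular $Q$.
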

\begin{proof}
	Let
	\( P \) be a finitely generated projective module such that
	\( M \oplus P \cong S^n \) is free. The module
	\( M \oplus P \) has forms
	\begin{align*}
			B( m \oplus p, m' \oplus p' ) &= B(m, n),
		&
			q( m \oplus p ) &= q(m),
	\end{align*}
	i.e.\ this is the orthogonal sum of
	\( M \) and
	\( P \), and the forms on
	\( P \) are zero. Clearly, it suffices to construct appropriate sesquilinear form on
	\( M \oplus P \), so without loss of generality
	\( M = S^n \) is free.
	
	Now let
	\( e_1, \ldots, e_n \in M \) be its basis as a free module. Both forms are completely determined by the values
	\( b_{i j} = B(e_i, e_j) \in S \) and
	\( q_i = q(e_i) \in S / \Lambda \). Moreover, these values can be arbitrary subject to the relations
	\begin{align*}
			b_{i j} &= b_{j i}^* \lambda
			\text{ for }
			i > j,
		&
			b_{i i} &= q'_i + (q'_i)^* \lambda
	\end{align*}
	where
	\( q'_i \in S \) are arbitrary preimages of
	\( q_i \). We can take
	\[
		Q\Bigl(
			\bigoplus_{i = 1}^n e_i x_i,
			\bigoplus_{i = 1}^n e_i y_i
		\Bigr)
		=
		\sum_{ 1 \leq i < j \leq n }
			x_i^* b_{i j} y_j
		+
		\sum_{i = 1}^n
			x_i^* q'_i y_i.
		\qedhere
	\]
\end{proof}

For example, let
\( P \) be a finitely generated projective module over a form ring
\( S \). The abelian group
\( \Hyp(P) = \Hom_S(P, S) \oplus P \) is a right
\( S \)-module under
\( (f \oplus p) x = x^* f \oplus p x \). Together with the forms
\begin{align*}
		B( f \oplus p, f' \oplus p' )
		&=
		f(p') + f'(p)^* \lambda,
	&
		q(f \oplus p) &= f(p) + \Lambda
\end{align*}
the module
\( \Hyp(P) \) is called a
\textit{hyperbolic quadratic module}. These forms are constructed by the sesquilinear form
\( Q( f \oplus p, f' \oplus p' ) = f(p') \), and the hermitian form is non-degenerate.

As an example of possible degenerate hermitian forms let
\( S = K \) be commutative with
\( \lambda = 1 \),
\( x^* = x \), and
\( \Lambda = 0 \). We say that
\( q \colon M \to K \) is a
\textit{traditional quadratic form} if
\( q(m x) = q(m) x^2 \) and the expression
\( B(m, n) = q(m + n) - q(m) - q(n) \) is bilinear. Clearly, all quadratic modules over this
\( S \) are precisely modules with traditional quadratic forms. The corresponding unitary group are called
\textit{orthogonal group} and denoted by
\( \Orth(q) \). If
\( M = \bigoplus_{i = 1}^{n} e_i K \) is free of finite rank, then
\( B \) is non-degenerate if and only if its
\textit{discriminant}
\( \mathrm{disc}(B) = \det_{i, j = 1}^n B(e_i, e_j) \) is invertible. In the case of odd rank
\( n = 2 \ell + 1 \) this determinant as a polynomial over
\( \Int \) in the formal variables
\( B(e_i, e_j) = B(e_j, e_i) \) for
\( i < j \) and
\( q(e_i) = \frac{1}{2} B(e_i, e_i) \) is divisible by
\( 2 \) and we call
\( \hdisc(B) = \frac{1}{2} \mathrm{disc}(B) \) the
\textit{half-discriminant} of
\( B \). A quadratic form
\( q \) on a free module of odd rank is called
\textit{semi-regular} if the corresponding half-discriminant is invertible. Both discriminant and half-discriminant preserve invertibility under base change, so we can define semi-regular quadratic forms on finitely generated projective modules of constant odd rank by Zariski localization. If
\( 2 \) is not invertible, then semi-regular quadratic forms always have degenerate symmetric bilinear forms
\( B \).

Sometimes it is more convenient to work with the graph of a quadratic form
\( q \) instead of
\( q \) itself. The
\textit{Heisenberg group} of a hermitian form
\( B \) is the set
\( \Heis(B) = M \times S \) with the group operation
\[
	(m, x) \dotplus (n, y) = ( m + n, x - B(m, n) + y ).
\]
The multiplicative monoid
\( S^\bullet\) of
\( S \) acts on
\( \Heis(B) \) by endomorphisms from the right,
\( (m, x) \cdot y = ( m y, y^* x y ) \). An
\textit{odd form parameter}
\cite{odd-uni-gro} is an
\( S^\bullet \)-invariant subgroup
\( \OddFormPar \leq \Heis(B) \) such that
\[
	\{ ( 0, x - x^* \lambda ) \mid x \in S \}
	=
	\OddFormPar_{\min}
	\leq
	\OddFormPar
	\leq
	\OddFormPar_{\max}
	=
	\{ (m, x) \mid x + B(m, m) + x^* \lambda = 0 \}.
\]
Clearly, both
\( \OddFormPar_{\min} \) and
\( \OddFormPar_{\max} \) are odd form parameters, i.e.\ they are
\( S^\bullet \)-invariant subgroups and
\( \OddFormPar_{\min} \leq \OddFormPar_{\max} \). In this case the unitary group is defined as
\[
	\Unit(M)
	=
	\{
		g \in \Aut(M_S)
		\mid
		B(g m, g n) = B(m, n),
		( g m - m, B(g m - m, m) ) \in \OddFormPar
	\}.
\]

If
\( \Lambda \) is an ordinary form parameter and
\( q \colon M \to S / \Lambda \) is a quadratic form, then
\( \OddFormPar = \{ (m, x) \mid x + \Lambda = - q(m) \} \) is an odd form parameter. Conversely, every odd form parameter
\( \OddFormPar \) such that the first projection
\( \OddFormPar \to M \) is surjective appears in this way from
\( \Lambda = \{ x \in S \mid (0, x) \in \OddFormPar \} \) and unique quadratic form.

The language of ordinary form parameters and quadratic forms allows us to easily construct orthogonal sums of modules with forms. On the other hand, the language of odd form parameters is better suited to work with elementary unitary groups.

Now let
\( (M, B, q) \) be a quadratic module over a form ring
\( S \). Take an integer
\( \ell \geq 0 \). Consider the module
\[
	e_{- \ell} S
	\oplus
	\ldots
	\oplus
	e_{- 1} S
	\oplus
	M
	\oplus
	e_1 S
	\oplus
	\ldots
	\oplus
	e_\ell S
\]
obtained from
\( M \) by adding a free direct summand of rank
\( 2 \ell \). This large module has the forms
\begin{align*}
		B\Bigl(
				\bigoplus_{i = - \ell}^{- 1}
					e_i x_i
				\oplus
				m
				\oplus
				\bigoplus_{i = 1}^{\ell}
					e_i x_i
			,
				\bigoplus_{i = - \ell}^{- 1}
					e_i y_i
				\oplus
				n
				\oplus
				\bigoplus_{i = 1}^{\ell}
					e_i y_i
		\Bigr)
		&=
		\sum_{i = - \ell}^{- 1}
			x_i^* y_{- i}
		+
		B(m, n)
		+
		\sum_{i = 1}^\ell
			x_i^* \lambda y_{- i},
	\\
		q\Bigl(
			\bigoplus_{i = - \ell}^{- 1}
				e_i x_i
			\oplus
			m
			\oplus
			\bigoplus_{i = 1}^{\ell}
				e_i x_i
		\Bigr)
		&=
		q(m)
		+
		\sum_{i = - \ell}^{- 1}
			x_i^* x_{- i}.
\end{align*}
In other words, this is the orthogonal sum of
\( M \) and the split hyperbolic quadratic module
\(
	\Hyp\bigl(
		\bigoplus_{i = 1}^\ell
			e_i S
	\bigr)
\). We denote the resulting unitary group by
\( \Unit(2 \ell, M) \). Clearly, there is a natural chain
\[
	\Unit(M)
	=
	\Unit(0, M)
	\leq
	\Unit(2, M)
	\leq
	\Unit(4, M)
	\leq
	\ldots
\]
of groups. Starting from
\( \ell = 1 \) each of these groups has the canonical elementary subgroup
\( \ElemUnit(2 \ell, M) \leq \Unit(2 \ell, M) \) generated by strictly upper and lower triangular generalized matrices from
\( \Unit(2 \ell, M) \). The factor-set is denoted by
\( \KUnit_1(2 \ell, M) \). By a special case of
\cite[theorem 4]{odd-uni-gro} the elementary subgroup is normal if
\( S \) is semilocal and
\( \ell \geq 2 \), so
\( \KUnit_1(2 \ell, M) \) is a group under these conditions. As with isotropic reductive groups, elementary subgroups of unitary groups are also generated by their
\textit{root subgroups} indexed by a root system of type
\( \RootSys{BC}{\ell} \), where ultrashort root subgroups are isomorphic to the odd form parameter
\( \OddFormPar \), short ones are isomorphic to the ring
\( S \), and the long ones are isomorphic to the ordinary form parameter
\( \Lambda \). See
\cite[\S 6]{odd-uni-gro} for details.

Finally, there is a construction of unitary groups avoiding quadratic modules. A pair
\( (R, \Delta) \) is called a (special unital) odd form ring if
\( R \) is a unital associative ring with involution
\( x \mapsto \inv{x} \) (with
\( \lambda = 1 \)) and
\( \Delta \) is an odd form parameter of the hermitian form
\( (x, y) \mapsto \inv{x} y \) on the module
\( R \). In other words,
\( \Delta \subseteq R \times R \) is closed under the operations
\begin{align*}
		(x, y) \dotplus (z, w)
		&=
		( x + z, y - \inv{x} z + w ),
	&
		(x, y) \cdot z &= ( x z, \inv{z} y z ),
\end{align*}
and
\[
	\{ ( 0, x - \inv{x} ) \mid x \in R \}
	=
	\Delta_{\min}
	\leq
	\Delta
	\leq
	\Delta_{\max}
	=
	\{
		(x, y) \in R \times R
		\mid
		y + \inv{x} x + \inv{y} = 0
	\}.
\]
The unitary group of
\( R \) considered as a quadratic module is
\[
	\Unit(R, \Delta)
	=
	\{
		g \in R^*
		\mid
		g^{- 1} = \inv{g},
		(g - 1, g - 1) \in \Delta
	\}.
\]
In
\cite{twi-for-cla} we used the condition
\( ( g - 1, \inv{g} - 1 ) \in \Delta \), but this is equivalent to
\( ( g - 1, g - 1 ) \in \Delta \) because
\( ( 0, g - \inv{g} ) \in \Delta_{\min} \).

An
\textit{orthogonal hyperbolic family} of rank
\( \ell \) in an odd form ring
\( (R, \Delta) \) is a complete family of orthogonal idempotents
\( e_{- \ell}, \ldots, e_0, \ldots, e_\ell \in R \) such that
\( \inv{e_i} = e_{- i} \) and
\( (e_i, 0) \in \Delta \) for
\( i \neq 0 \). Such a family is a replacement of a family of hyperbolic direct summand from the definition of
\( \Unit(2 \ell, M) \) above, in particular, the elementary subgroup
\( \ElemUnit(R, \Delta) \leq \Unit(R, \Delta) \) is defined as the group generated by upper and lower generalized triangular matrices from
\( \Unit(R, \Delta) \). If such a family is fixed, then root elements are defined as
\begin{align*}
		T_{i j}(x) &= 1 + x - \inv{x},
	&
		T_i(y, z) &= 1 + y + z - \inv{y},
\end{align*}
where
\( x \in e_i R e_j \),
\( y \in e_0 R e_i \),
\( z \in e_{- i} R e_i \),
\( (y, z) \in \Delta \), and
\( 0 \neq i \neq \pm j \neq 0 \). The root subgroup
\( \Image( T_{i j} ) = \Image( T_{- j, - i} ) \) corresponds to the root
\( \e_j - \e_i \), and
\( \Image( T_i ) \) corresponds to the root
\( \e_i \) of
\[
	\Phi
	=
	\{
		\pm \e_i \pm \e_j
		\mid
		1 \leq i < j \leq \ell
	\}
	\sqcup
	\{
		\pm \e_i,
		\pm 2 \e_i
		\mid
		1 \leq i \leq \ell
	\}
	\subseteq
	\Real^\ell,
\]
where
\( \e_{- i} = - \e_i \). The long root subgroup corresponding to
\( 2 \e_i \) consists of all
\( T_i(0, z) \).

Unitary groups constructed by quadratic modules and odd form algebras are related as follows. Let
\( S \) be a form ring and
\( (M, B, q) \) a quadratic module over
\( S \) with non-degenerate hermitian form
\( B \). Then
\( R = \mathrm{End}(M_S) \) has the involution
\( B( \inv{r} m, n ) = B(m, r n) \) and the form parameter
\[
	\Delta
	=
	\{
		(r, s) \in R \times R
		\mid
		s + \inv{r} r + \inv{s} = 0,
		q(r m) + B(m, s m) = 0
	\},
\]
so
\( (R, \Delta) \) is an odd form ring
\cite[\S 3]{twi-for-cla}. It is easy to see that
\( \Unit(M) = \Unit(R, \Delta) \) as subgroups of
\( R \). If we apply this construction to the quadratic module
\( M \perp \Hyp(S^\ell) \) instead of
\( S \), then the resulting endomorphism ring
\( R \) has an obvious orthogonal hyperbolic family.

Before stating main results we need to deal with triality in the case
\( \RootSys{D}{4} \). Recall that the split simple adjoint group scheme
\( \ProjSpecOrthSch_8 \) of type
\( \RootSys{D}{4} \) over
\( K \) has the automorphism group scheme
\( \ProjSpecOrthSch_8 \rtimes \Sym_3 \), where the symmetric group
\( \Sym_3 \) is considered as a constant sheaf. We say that a simple group scheme
\( G \) with absolute root system of type
\( \RootSys{D}{4} \) (or another
\textit{avoids triality} if its isomorphism class lies in the image of
\[
	\Hyp^1_\fppf(
		K,
		\ProjSpecOrthSch_8 \rtimes \Int / 2 \Int
	)
	\to
	\Hyp^1_\fppf(
		K,
		\ProjSpecOrthSch_8 \rtimes \Sym_3
	).
\]
This is independent of the choice of a subgroup
\( \Int / 2 \Int \leq \Sym_3 \) because all such subgroups are conjugate. For example,
\( G \) avoids triality if it is an inner form (from
\( \Hyp^1_\fppf( K, \ProjSpecOrthSch_8 ) \)), is a twisted form of
\( \SpecOrthSch_8 \), or has an isotropic pinning with any Tits index except
\( \TitsIndex{D}{s}{4}{1}{ (2) } \) for
\( s \in \{ 1, 2 \} \) and the indices
\( \TitsIndex{D}{s}{4}{1}{9} \),
\( \TitsIndex{D}{s}{4}{2}{2} \) for
\( s \in \{ 3, 6 \} \) explicitly involving triality.

\begin{lemma}
	\label{twi-cla}
	Let
	\( G \) be a simple adjoint group scheme over
	\( K \) with the absolute root system of classical type, i.e.\ %
	\( \RootSys{A}{\ell} \),
	\( \RootSys{B}{\ell} \),
	\( \RootSys{C}{\ell} \), or
	\( \RootSys{D}{\ell} \). In the case of
	\( \RootSys{D}{4} \) assume that
	\( G \) avoids triality. If its absolute root system is of type
	\( \RootSys{B}{\ell} \), then
	\( G \) is the orthogonal group scheme of a finitely generated projective
	\( K \)-module of constant rank
	\( 2 \ell + 1 \) with semi-regular traditional quadratic form. If the type is
	\( \RootSys{A}{\ell} \),
	\( \RootSys{C}{\ell} \), or
	\( \RootSys{D}{\ell} \), then
	\( G \) is the scheme derived subgroup of the automorphism group scheme of a sheaf
	\( ( \ModSheaf(R), \ModSheaf(\Delta) ) \) of odd form
	\( K \)-algebras,
	\( R \) is an Azumaya algebra over
	\( K \) or its quadratic \'etale extension, the first projection
	\( \Delta \to R \) is surjective, and the kernel of this projection is finitely generated projective
	\( K \)-module. The scheme derived subgroup of the unitary group sheaf is also a reductive group scheme and a finite central extension of
	\( G \).
\end{lemma}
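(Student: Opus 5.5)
The plan is to argue by fppf descent, using the classification of split forms together with the description of their automorphism group schemes. Since $G$ is simple adjoint, it is a twisted form of the split adjoint Chevalley group $G_0 = \ChevGrp^{\mathrm{ad}}_K(\widetilde{\Phi}, {-})$. Its isomorphism class is therefore an element of $\Hyp^1_\fppf(K, \Aut(G_0))$, where $\Aut(G_0) = G_0 \rtimes \mathrm{Out}$ by \cite[Exp. XXIV]{red-grp-sch}. For each classical type I will exhibit a split ``model'' object $X_0$ with the following properties. For type $\RootSys{B}{\ell}$, $X_0$ is the free module $K^{2\ell+1}$ with the standard split semi-regular traditional quadratic form. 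For types $\RootSys{A}{\ell}$, $\RootSys{C}{\ell}$ and $\RootSys{D}{\ell}$, $X_0$ is a split odd form $K$-algebra $(R_0, \Delta_0)$, namely $\mathrm{M}_{\ell+1}(K) \times \mathrm{M}_{\ell+1}(K)^{\mathrm{op}}$ with the exchange involution for $\RootSys{A}{\ell}$, and $\mathrm{End}$ of a split hyperbolic symplectic or quadratic module with the induced $\Delta$ as in the end of \S 7 for $\RootSys{C}{\ell}$ and $\RootSys{D}{\ell}$. I will then check that the natural homomorphism $\Aut(X_0) \to \Aut(G_0)$ is an isomorphism of fppf sheaves. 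In the $\RootSys{D}{4}$ case it is only an isomorphism onto $\ProjSpecOrthSch_8 \rtimes \Int/2\Int$; this is exactly why triality has to be avoided.

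Given this isomorphism, twisted forms of $X_0$ correspond bijectively to twisted forms of $G_0$, with the appropriate image in the $\RootSys{D}{4}$ case. Hence $G$ arises from a twisted form $X$ of $X_0$, and the required properties of $X$ are fppf local and follow from those of $X_0$ by descent. Specifically, a form of $\mathrm{M}_n(K)$ or of $\mathrm{M}_n \times \mathrm{M}_n^{\mathrm{op}}$ is an Azumaya algebra over $K$ or over the quadratic étale extension given by its center. Surjectivity of $\Delta \to R$ is local. The kernel $\{(0,y)\}$ is locally the free module $\Lambda$, hence finitely generated projective. Semi-regularity and constant rank $2\ell+1$ are local as well. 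The identification of $G$ with the scheme derived subgroup of $\Unit(\ModSheaf(R), \ModSheaf(\Delta))$ modulo its center, and the claim that this derived subgroup is reductive and a finite central extension of $G$, are also fppf local. In the split case they reduce to known facts: $\SpecLin_{\ell+1} \to \mathrm{PGL}_{\ell+1}$, $\mathrm{Sp}_{2\ell} \to \mathrm{PSp}_{2\ell}$, and $\SpecOrthSch_{2\ell} \to \ProjSpecOrthSch_{2\ell}$, each with kernel $\mu_n$ or $\mu_2$ of multiplicative type. For type $\RootSys{B}{\ell}$, $\SpecOrthSch(q)$ is already adjoint, so $G \cong \SpecOrthSch(q)$, which is the orthogonal group scheme up to the usual convention.

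The main obstacle is the computation of $\Aut(X_0)$ over an arbitrary base, especially when $2$ is not invertible. In that situation the odd form parameter really carries information beyond the involution, and in type $\RootSys{B}{\ell}$ the bilinear form is degenerate. To handle this, I would show that every automorphism of $(R_0, \Delta_0)$ is fppf locally inner. The Skolem--Noether theorem for Azumaya algebras, applied after a fppf cover, makes it conjugation by some $g \in R_0^*$, possibly composed with the swap of factors in type $\RootSys{A}{\ell}$. Compatibility with the involution forces $g$ to be a similitude. Preservation of $\Delta_0$ then forces $g$ to preserve the quadratic form up to a scalar, which yields $\mathrm{PGO}$, $\mathrm{PGSp}$ or $\mathrm{PGL} \rtimes \Int/2\Int$, respectively. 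For type $\RootSys{B}{\ell}$, the semi-regularity of $q$ gives $\Aut(q)/\mu_2 = \Orth(q)/\mu_2 \cong \SpecOrthSch(q)$, which equals $\Aut(G_0)$ because there are no outer automorphisms. For type $\RootSys{D}{\ell}$, one compares $\mathrm{PGO}_{2\ell}$ with $\ProjSpecOrthSch_{2\ell} \rtimes \Int/2\Int$ using the Dickson invariant; this is where the $\RootSys{D}{4}$ restriction enters.
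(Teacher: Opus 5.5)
Your proposal is correct in substance and follows the same route as the paper: fppf descent, based on comparing the automorphism group scheme of a split model with $\Aut(G_0)$. The paper does not carry out the Skolem--Noether and similitude computation for types $\RootSys{A}{\ell}$, $\RootSys{C}{\ell}$, $\RootSys{D}{\ell}$; it cites \cite[theorem 1]{twi-for-cla}, which is exactly that computation. In type $\RootSys{B}{\ell}$ the paper takes as model the triple $(M, q, \omega)$ with a volume form $\omega$, rather than $(M, q)$. This makes the automorphism group scheme exactly $\SpecOrthSch_{2\ell+1}$ instead of $\Orth_{2\ell+1} = \SpecOrthSch_{2\ell+1} \times \mu_2$, so the comparison map is a genuine isomorphism.

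Two statements in your plan are too strong, though neither is fatal.

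\begin{itemize}
\item The map $\Aut(X_0) \to \Aut(G_0)$ is not an isomorphism in type $\RootSys{B}{\ell}$, where the kernel is $\mu_2$, as you noticed. With your linear model it also fails in type $\RootSys{A}{1}$. There $\Aut(X_0) = \mathrm{PGL}_2 \rtimes \Int / 2 \Int$, but $\Aut(\mathrm{PGL}_2) = \mathrm{PGL}_2$. The swap acts on $\mathrm{GL}_2$ by $a \mapsto (a^{t})^{-1}$, and this becomes inner on $\mathrm{PGL}_2$, so the map has a kernel of order $2$.

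\item Consequently, twisted forms of $X_0$ do not correspond bijectively to twisted forms of $G_0$. For example, $q$ and $c q$ for a unit $c$ have the same special orthogonal group but need not be isometric.
\end{itemize}

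What you actually need, and what holds in every case, is weaker. The map $\Aut(X_0) \to \Aut(G_0)$ admits a section, so $\Hyp^1_\fppf(K, \Aut(X_0)) \to \Hyp^1_\fppf(K, \Aut(G_0))$ is surjective. Twisting $X_0$ by a lift of the torsor defining $G$ gives an $X$ with $[\Aut(X), \Aut(X)] \cong G$. For forms of $\mathrm{PGL}_2$ you could instead do what the paper does: use the symplectic model, a quaternion algebra with canonical involution and maximal odd form parameter, for which the map really is an isomorphism.
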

\begin{proof}
	Recall that the group scheme
	\( \SpecOrthSch_{2 \ell + 1} \) for
	\( \ell \geq 1 \) is simple and adjoint, and its point group
	\( \SpecOrth(2 \ell + 1, K) \) is the intersection of the orthogonal group
	\( \Orth(2 \ell + 1, K) \) of the split traditional quadratic form
	\(
		q( x_{- \ell}, \ldots, x_\ell )
		=
		x_0^2 + \sum_{i = 1}^\ell x_{- i} x_i
	\) with
	\( \SpecLin(2 \ell + 1, K) \). In other words,
	\( \SpecOrth(2 \ell + 1, K) \) is the automorphism group of the triple
	\( (M, q, \omega) \), where
	\( M = K^{2 \ell + 1} \) and
	\(
		\omega
		=
		e_{- \ell} \wedge \ldots \wedge e_\ell
		\in
		\Lambda^{2 \ell + 1}(M)
	\) is the standard volume form. Since
	\( \SpecOrthSch_{2 \ell + 1} \) is its own automorphism group scheme, all its twisted forms are automorphism group schemes of triple
	\( (M, q, \omega) \), where
	\( M \) is a projective
	\( K \)-module of constant rank
	\( 2 \ell + 1 \),
	\( q \colon M \to K \) is a traditional semi-regular quadratic form on
	\( M \), and
	\( \omega \in \Lambda^{2 \ell + 1}(M) \) is a non-degenerate volume form such that
	\( \hdisc_\omega(q) = (- 1)^\ell \). Here
	\( \hdisc_\omega \) denotes the half-discriminant evaluated Zariski locally in a basis with the exterior product
	\( \omega \).
	
	Remaining cases follow from
	\cite[theorem 1]{twi-for-cla}. If
	\( G \) is a twisted form of
	\( \mathbb{PGL}_2 \cong \mathbb{PS}\mathrm{p}_2 \), then we use the symplectic odd form algebra instead of the linear one, i.e.\ a quaternion algebra with canonical involution and the maximal odd form parameter.
\end{proof}

In the next theorem the three cases are distinguished by the type of the relative root system, namely,
\( \RootSys{A}{r} \) in the first case,
\( \RootSys{B}{r} \) or
\( \RootSys{D}{r} \) in the second one, and
\( \RootSys{BC}{r} \) or
\( \RootSys{C}{r} \) in the last one.

\begin{theorem}
	\label{twi-iso-cla}
	Let
	\( G \) be a simple adjoint group scheme over
	\( K \) with the absolute root system of classical type of rank
	\( n \). Suppose that
	\( G \) has an isotropic pinning of positive rank
	\( r \geq 1 \). In the case of absolute root system
	\( \RootSys{D}{4} \) we require that the Tits index is neither
	\( \TitsIndex{D}{s}{4}{1}{ (2) } \), nor
	\( \TitsIndex{D}{s}{4}{1}{9} \), nor
	\( \TitsIndex{D}{s}{4}{2}{2} \). Also exclude the Tits indices
	\( \TitsIndex{D}{1}{4}{1}{ (4) } \) and
	\( \TitsIndex{D}{1}{4}{2}{ (2) } \), though one can instead use isomorphic indices
	\( \TitsIndex{D}{1}{4}{1}{ (1) } \) and
	\( \TitsIndex{D}{1}{4}{2}{ (1) } \). Then
	\( G \) together with the isotropic pinning has one of the following standard forms.
	\begin{itemize}
		
		\item
		For Tits indices
		\( \TitsIndex{A}{1}{n}{r}{ (d) } \) (with
		\( d (r + 1) = n + 1 \)) except
		\( r = 1 \) and
		\( d \geq 2 \) the group scheme is the automorphism group scheme of the algebra
		\( R = \Mat(\ell + 1, S) \), where
		\( S \) is an Azumaya algebra of rank
		\( d \).
		
		\item
		For Tits indices
		\( \TitsIndex{B}{}{n}{r}{} \) and
		\( \TitsIndex{D}{s}{n}{r}{ (1) } \) the group scheme is the projective special orthogonal group
		\(
			\ProjSpecOrthSch(r, M)
			=
			\ProjSpecOrthSch( M \perp \Hyp(K^r) )
		\), where
		\( M \) is finitely generated projective module of constant rank
		\( 2 n + 1 - 2 r \) with semi-regular traditional quadratic form or
		\( 2 n - 2 r \) with non-degenerate traditional quadratic form respectively.
		
		\item
		Otherwise if
		\( \TitsIndex{X}{s}{n}{r}{ (d) } \) is the Tits index, then there are a form ring
		\( S \) and a quadratic module
		\( (M, B, q) \) over
		\( S \) such that
		\( S \) is an Azumaya algeba over
		\( K \) or its quadratic \'etale extension (for
		\( \mathsf{X} = \mathsf{A} \)) of rank
		\( d \) with
		\( K \)-linear involution,
		\( \lambda = - 1 \), the form parameter
		\( \Lambda \) is a direct summand of
		\( S \) as a
		\( K \)-module, the hermitian form
		\( B \) is non-degenerate, and the rank of
		\( M \) over
		\( K \) is
		\( 2 d (n - r d) \) (for
		\( \mathsf{X} \in \{ \mathsf{C}, \mathsf{D} \} \)) or
		\( 2 d (n + 1 - 2 r d) \) (for
		\( \mathsf{X} = \mathsf{A} \)). Let
		\( \UnitSch \) be the unitary group scheme of
		\( M \perp \Hyp(S^r) \). Then the scheme derived subgroup
		\( [ \UnitSch, \UnitSch ] \) is a simple reductive group scheme and
		\( G \) is its scheme factor-group by the center.
		
	\end{itemize}
\end{theorem}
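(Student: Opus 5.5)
Start from lemma \ref{twi-cla}, which already gives $G$ globally as an orthogonal group scheme (type $\RootSys{B}{n}$) or as the adjoint quotient of the scheme derived subgroup of the unitary group sheaf of an odd form $K$-algebra $(R, \Delta)$. Here $R$ is Azumaya over $K$ or over a quadratic \'etale extension, and $\Delta \to R$ is surjective. The task is to use the isotropic pinning to split off a hyperbolic part of rank $r$. By lemma \ref{iso-pin}(1) I can pass freely between $G$ and its central quotients or extensions, so I work with the unitary group of $(R, \Delta)$, or with $\SpecOrthSch(M')$ in the $\RootSys{B}{n}$ case, and transport the pinning. The two tasks are: turn the pinning into an orthogonal hyperbolic family $e_{-r}, \ldots, e_r$ in $R$ (in the orthogonal case, into $r$ hyperbolic pairs in the quadratic module), and then read off $S$, $M$ and the ranks.

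To produce the idempotents I use the canonical split torus $T \cong \mathbb{G}_{\mathrm{m}}^r$ attached to the pinning. It exists by \cite[theorem 1]{weyl-ele}, as in the proof of lemma \ref{loc-iso-rk}, since both root systems are irreducible here. The torus acts on $R$, or on the underlying module $M'$ in the orthogonal case, through the natural representation, and I take the weight decomposition. The weights are the $\pm\e_i$ and $0$ for types $\RootSys{B}{}$, $\RootSys{C}{}$, $\RootSys{D}{}$, $\RootSys{BC}{}$, and the $\e_i$ (or $\e_i - \frac{1}{r+1}\sum_j \e_j$) for type $\RootSys{A}{}$. The corresponding projections are idempotents $e_i \in R$ that are defined over $K$, because $T$ is split. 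Involution-compatibility $\inv{e_i} = e_{-i}$ holds because $T$ lies in the unitary group. The remaining conditions, $(e_i, 0) \in \Delta$, completeness and orthogonality, only need to be checked fppf locally. There they reduce to the split Chevalley model, where the map $u$ is one of the standard Tits indices from \cite{tits-ind}.

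Next I identify the pieces. In the $\RootSys{A}{}$ case with inner index ($R$ Azumaya over $K$, no involution), the $e_i$ for $1 \leq i \leq r+1$ are conjugate via the Weyl elements $w_\alpha$, so $e_i R e_j \cong e_1 R e_1 =: S$. Hence $R \cong \Mat(r+1, S)$, and a local rank count gives $\deg S = d$. The exclusion of $r = 1$, $d \geq 2$ concerns exactly the case where the group could be unitary rather than linear. In the orthogonal case, $e_{\pm i}$ give isotropic lines, and the $w_\alpha$ show these lines are free of rank one and pair hyperbolically. So $M' = M \perp \Hyp(K^r)$ with $M = e_0 M'$, semi-regularity or non-degeneracy is inherited, and the rank is $2n+1-2r$ or $2n-2r$. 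In the remaining cases I set $S = e_1 R e_1$ with the restricted involution, and $M = e_0 R e_1$, a right $S$-module. The form is $B(m, m') = \inv{m} m' \in S$, and the quadratic form is taken from $\Delta$ as in \cite[\S 3]{twi-for-cla}. Again $w_\alpha$ identify all $e_i R e_j$ with $S$ for $i, j \neq 0$. Then $R \cong \mathrm{End}_S(M \perp \Hyp(S^r))$, and rank counting in the split model gives $\operatorname{rk}_K S = d$ (times $2$ over the quadratic extension for type $\RootSys{A}{}$) and the stated rank of $M$. That $\lambda = -1$ (after a possible rescaling by $e_{1}$-$e_{-1}$ components), that $\Lambda \leq S$ is a direct summand, and that $B$ is non-degenerate are all local statements, checked in the split model.

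I expect the main obstacle to be showing that the transported idempotents and the rank-one identifications $e_i R e_j \cong S$ exist globally, not just fppf locally. This is why Weyl elements are part of the definition of an isotropic pinning: I first try to get the isomorphisms $e_i R \cong e_j R$ explicitly from $w_{\e_j - \e_i} \in U_\alpha(K) U_{-\alpha}(K) U_\alpha(K)$, using the entries of $w$ in $e_i R e_j$. A second delicate point is the $\RootSys{D}{4}$ bookkeeping, namely that the excluded indices are exactly those where the natural representation is not $T$-stable. I handle this by noting that for the allowed indices the pinning preserves the chosen $\Int/2\Int$-reduction from the avoidance of triality.
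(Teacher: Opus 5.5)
Your orthogonal case has a genuine gap: the Weyl elements do \emph{not} show that the isotropic lines are free. Let $M' = \bigoplus_{i=-r}^{r} M_i$ be the weight decomposition of the module given by lemma \ref{twi-cla}. The long-root Weyl elements only give $M_1 \cong \cdots \cong M_r = L$ and $M_{-i} \cong L^{\vee}$ for some invertible module $L$. A short-root Weyl element exists if and only if some $x \in M_0 \otimes_K L$ has $\widehat{q}(x)$ generating $L \otimes_K L$. This yields $L \otimes_K L \cong K$ (i.e.\ $L \cong L^{\vee}$) and nothing more.

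This cannot be improved. Let $L$ be a non-trivial invertible module with $L \otimes_K L \cong K$, and let $N$ be semi-regular with a vector of invertible norm. Then $G = \SpecOrthSch(N \perp \Hyp(K^r))$ with its standard pinning is also the special orthogonal group of $(N \perp \Hyp(K^r)) \otimes_K L$, with the form transported through $L \otimes_K L \cong K$. The Weyl elements are the same elements of $G(K)$, but the weight lines of the second module are all isomorphic to $L$. Lemma \ref{twi-cla} gives you no control over which such module you get; when $\det N \cong L$, the volume form in its proof even excludes $N \perp \Hyp(K^r)$ itself. So ``$M' = M \perp \Hyp(K^r)$ with $M = e_0 M'$'' fails in general.

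The missing idea is the twist used in the paper. Trivialize $L \otimes_K L$ by the short Weyl element and replace $M'$ by $M' \otimes_K L$ with the form $\widehat{q}$. This does not change the special orthogonal group scheme, it makes all lines $M_i \otimes_K L$ for $i \neq 0$ free, and the resulting $M$ is $M_0 \otimes_K L$ rather than $e_0 M'$. The same issue would affect $\TitsIndex{D}{s}{n}{r}{ (1) }$ if you treated it via an underlying quadratic module, which lemma \ref{twi-cla} does not provide in type $\RootSys{D}{}$ anyway. The paper stays inside the odd form ring and takes $M = e_0 R e_1$ with $q$ defined through $\Delta$ and $e_{-1, 1}$, which builds the twist in automatically.

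A smaller point of the same nature appears in the unitary cases. $\inv{m} m'$ lies in $e_{-1} R e_1$, not in $S = e_1 R e_1$. So $B$, the involution on $S$ and $\Lambda$ cannot be defined without \emph{global} elements $e_{\mp 1, \pm 1} \in e_{\mp 1} R e_{\pm 1}$ with $e_{\pm 1, \mp 1} e_{\mp 1, \pm 1} = e_{\pm 1}$ and $(0, e_{\mp 1, \pm 1}) \in \Delta$. The paper takes them from the long-root Weyl element $T_1(0, e_{-1, 1})\, T_{-1}(0, - e_{1, -1})\, T_1(0, e_{-1, 1})$. It then sets $x^* = e_{1, -1} \inv{x} e_{-1, 1}$, $B(m, m') = e_{1, -1} \inv{m} m'$, and $\Lambda = \{ x \in S \mid (0, e_{-1, 1} x) \in \Delta \}$. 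With these choices, $\lambda = -1$ is the identity $\inv{e_{\mp 1, \pm 1}} = - e_{\mp 1, \pm 1}$ forced by $(0, e_{\mp 1, \pm 1}) \in \Delta$. It is a property of this global choice, not something checked in the split model. For $\TitsIndex{D}{s}{n}{r}{ (1) }$ the root elements $T_i(0, x)$ are trivial, so a different basic Weyl element must be used there.

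The remaining parts of your outline agree in substance with the paper. These are producing the idempotents from the canonical split torus (instead of the paper's generalized matrix structure) and handling the inner $\RootSys{A}{}$ case via the entries of the lifted Weyl elements.
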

\begin{proof}
	In the first case
	\( G \) is an inner twisted form of
	\( \mathbb{PGL}_{n + 1} \), so it is the automorphism group scheme of an Azumaya algebra
	\( R \). The isotropic pinning induces a generalized matrix structure on
	\( R \), i.e.\ a complete family of orthogonal idempotents
	\( e_{0 0}, \ldots, e_{n n} \in R \). Moreover, existence of Weyl elements for basic roots easily implies that there are
	\begin{align*}
			e_{i, i + 1} &\in e_{i i} R e_{i + 1, i + 1},
		&
			e_{i + 1, i} &\in e_{i + 1, i + 1} R e_{i i}
	\end{align*}
	for
	\( 0 \leq i < n \) such that
	\begin{align*}
			e_{i, i + 1} e_{i + 1, i} &= e_{i i},
		&
			e_{i + 1, i} e_{i, i + 1} &= e_{i + 1, i + 1}.
	\end{align*}
	It follows that
	\( R \cong \Mat(n + 1, S) \) for
	\( S = e_{0 0} R e_{0 0} \).
	
	Now consider the case
	\( \RootSys{B}{}{n}{r}{} \). By lemma
	\ref{twi-cla} we have
	\( G = \SpecOrthSch(M) \) for some projective module
	\( M \) of constant rank
	\( 2 \ell + 1 \) with semi-regular traditional quadratic form
	\( q \). Existence of root subgroups induces a decomposition
	\( M = \bigoplus_{i = - r}^r M_i \), where
	\( M_i \) have rank
	\( 1 \) for
	\( i \neq 0 \),
	\( M_0 \) has rank
	\( 2 n - 2 r + 1 \),
	\( M_i \perp M_j \) unless
	\( i = - j \),
	\( q(M_i) = 0 \), and the symmetric bilinear form
	\( B \) induces isomorphisms
	\( M_{- i} \cong \Hom_K(M_i, K) = M_i^{\vee} \) for
	\( i \neq 0 \). As in the first case, existence of basic Weyl elements with long roots means that up to isomorphism
	\( M_1 = \ldots = M_r = L \) and
	\( M_{- 1} = \ldots = M_{- r} = L^{\vee} \) for a projective module
	\( L \) of constant rank
	\( 1 \). Finally, short basic root subgroups are parameterized by
	\( M_0 \otimes_K L \) and
	\( M_0 \otimes_K L^{\vee} \), and short Weyl elements exists if and only if there is
	\(
		x
		=
		\sum_{i = 1}^N
			m_i \otimes l_i
		\in
		M_0 \otimes_K L
	\) such that
	\[
		\widehat{q}(x)
		=
		\sum_{i = 1}^N
			q(m_i)\, l_i \otimes l_i
		+
		\sum_{ 1 \leq i < j \leq N }
			B(m_i, m_j)\, l_i \otimes l_j
		\in
		L \otimes_K L
	\]
	is invertible. Existence of such element means that
	\( L \otimes_K L \) has a basis, so
	\( L \cong L^{\vee} \). Finally, we can replace
	\( M \) with
	\( M \otimes_K L \) and
	\( q \) with
	\( \widehat{q} \) via the above formula to make all
	\( M_i \) free modules for
	\( i \neq 0 \). Such a replacement clearly preserves the odd orthogonal group.
	
	In the remaining cases let
	\( (R, \Delta) \) be the associated odd form ring from lemma
	\ref{twi-cla}. Below we use that it is a twisted form of one of the known ``split'' odd form algebras from
	\cite[\S 5]{twi-for-cla}, i.e.\ the ones constructed by the corresponding split quadratic modules. The set of root subgroups corresponds to an orthogonal hyperbolic family
	\( e_{- r}, \ldots, e_0, \ldots, e_r \in R \), where
	\( e_i R e_i \) is an Azumaya algebra over
	\( K \) of rank
	\( d \) for
	\( \mathsf{X} \in \{ \mathsf{C}, \mathsf{D} \} \) and an Azumaya algebra over a quadratic \'etale extension of
	\( K \) of rank
	\( d \) for
	\( \mathsf{X} = \mathsf{A} \). As in the first case, existence of Weyl elements with short roots (or long roots for
	\( \TitsIndex{D}{s}{n}{r}{ (1) } \)) means that there are
	\( e_{i j} \in e_i R e_j \) such that
	\( e_i = e_{i i} \) and
	\( e_{i j} e_{j k} = e_{i k} \) for
	\( i \),
	\( j \),
	\( k \) simultaneously positive or negative. Assume that the Tits index is
	\( \TitsIndex{D}{s}{n}{r}{ (1) } \), then
	\( e_i R e_i \cong K \) and all root elements
	\( T_i(0, x) \) are trivial, so
	\( x = \inv{x} \) for
	\( x \in e_{- i} R e_i \),
	\( i \neq 0 \). Existence of remaining basic Weyl element (both for
	\( r < d \) and for
	\( r = d \)) implies that there are
	\( e_{\mp 1, \pm 1} \in e_{\mp 1} R e_{\pm 1} \) such that
	\( e_{\pm 1, \mp 1} e_{\mp 1, \pm 1} = e_1 \). Then the module
	\( M = e_0 R e_1 \) has traditional non-degenerate quadratic form
	\( q(m) \) such that
	\( ( m, - e_{- 1, 1} q(m) ) \in \Delta \) and
	\( (R, \Delta) \) is constructed by
	\( M \perp \Hyp(K^r) \), so
	\( \Unit(R, \Delta) = \Unit(r, M) \) is an orthogonal group.
	
	For remaining Tits indices there is a basic Weyl element
	\[
		T_1( 0, e_{- 1, 1} )\,
		T_{- 1}( 0, - e_{1, - 1} )\,
		T_1( 0, e_{- 1, 1} )
	\]
	with long root, where necessarily
	\( e_{\pm 1, \mp 1} e_{\mp 1, \pm 1} = e_{\pm 1} \). Since
	\( ( 0, e_{\mp 1, \pm 1} ) \in \Delta \), we have
	\( \inv{ e_{\mp 1, \pm 1} } = - e_{\mp 1, \pm 1} \). The algebra
	\( S = e_1 R e_1 \) has the
	\( K \)-linear involution
	\( x \mapsto x^* = e_{1, - 1} \inv{x} e_{- 1, 1} \) and the form parameter
	\(
		\Lambda
		=
		\{ x \in S \mid ( 0, e_{- 1, 1} x ) \in \Delta \}
	\) with respect to
	\( \lambda = - 1 \). The
	\( S \)-module
	\( M = e_0 R e_1 \) has non-degenerate hermitian form
	\( B(m, m') = e_{1, - 1} \inv{m} m' \) and quadratic form
	\(
		q(m)
		=
		\{ x \in S \mid (m, - e_{- 1, 1} x) \in \Delta \}
	\), so the corresponding odd form parameter is
	\(
		\OddFormPar
		=
		\{
			(m, x) \in M \times S
			\mid
			(m, e_{- 1, 1} x) \in \Delta
		\}
	\). It is easy to check that
	\( (R, \Delta) \) is the odd form ring constructed by
	\( M \perp \Hyp(S^r) \), so
	\( \Unit(R, \Delta) = \Unit(r, M) \).
\end{proof}

\section{%
	Solvability of globally isotropic%
	\texorpdfstring{
		\( \KFunc_1 \)
	}{K1}%
}

In this section we prove solvability of
\( \KFunc_1^G(K) \) for semilocal
\( K \), where
\( G \) is simple with an isotropic pinning of rank at least
\( 2 \).

\begin{lemma}
	\label{lin-ab-k1}
	If
	\( S \) be a semilocal associative unital ring, then
	\( \KFunc_1(n, S) \to \KFunc_1(n + 1, S) \) is surjective for
	\( n \geq 1 \) and injective for
	\( n \geq 2 \). Moreover,
	\( \KFunc_1(2, S) \cong \KFunc_1(3, S) \cong \ldots \) is abelian.
\end{lemma}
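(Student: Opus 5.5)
The plan is to use the classical stable rank arguments. A semilocal ring \( S \) has stable rank \( 1 \) (Bass). So for every \( n \geq 1 \) and every unimodular column \( (a_1, \ldots, a_{n+1})^t \) we can reach \( (1, 0, \ldots, 0)^t \) by elementary transformations. For surjectivity of \( \KFunc_1(n, S) \to \KFunc_1(n + 1, S) \) with \( n \geq 1 \), take \( g \in \GenLin(n + 1, S) \). Its last column is unimodular. Stable rank \( 1 \) gives \( b_i \) with \( a_1 + \sum_{i \geq 2} b_i a_i \) invertible. Elementary operations then make some entry invertible and clear the rest of the column, so \( g \in \Elem(n + 1, S) \bigl( \GenLin(n, S) \times 1 \bigr) \). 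The \( n = 1 \) case is included, since \( \GenLin(1, S) = S^* \) and \( \Elem(1, S) = 1 \).

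Injectivity for \( n \geq 2 \) is the main obstacle. Here I would follow Vaserstein's injective stability theorem \( \GenLin(n, S) \cap \Elem(n + 1, S) = \Elem(n, S) \) for \( n \geq \mathrm{sr}(S) + 1 = 2 \). Concretely, write \( \Elem(n + 1, S) \) as products of elementary matrices and use the Bruhat/Gauss decomposition available over stable rank \( 1 \) rings. Any element of \( \Elem(n + 1, S) \) fixing \( e_{n+1} \) and the last row can be rewritten as an element of \( \Elem(n, S) \) times a unipotent matrix. This uses normality of \( \Elem(n, S) \) in \( \GenLin(n, S) \) for \( n \geq 2 \) over rings of stable rank \( 1 \), which follows from the Whitehead-type identity \( \mathrm{diag}(a, a^{-1}) \in \Elem(2, S) \) when \( a \) is invertible. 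I would cite Vaserstein rather than redo the computation. Alternatively, since \( S \) is semilocal one can first reduce modulo \( \Jac(S) \). The kernel \( \GenLin(n, S, \Jac(S)) \) consists of matrices with invertible diagonal, and by Gauss elimination these are congruent modulo \( \Elem \) to diagonal matrices. This reduces the claim to a product of matrix rings over division rings, where it is Dieudonné's theorem.

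Abelianness then follows. By the Whitehead lemma, \( [\GenLin(n, S), \GenLin(n, S)] \leq \Elem(2n, S) \) inside \( \GenLin(2n, S) \): for \( g, h \in \GenLin(n, S) \), the block matrix \( \sMat{[g,h]}{0}{0}{1} \) is a product of elementary matrices via \( \sMat{g}{0}{0}{g^{-1}} \in \Elem(2n, S) \). By the stability isomorphisms \( \KFunc_1(n, S) \cong \KFunc_1(2n, S) \), the commutator is already trivial in \( \KFunc_1(n, S) \) for \( n \geq 2 \). The case \( \KFunc_1(2, S) \) uses injectivity starting from \( n = 2 \), so it is covered as well.
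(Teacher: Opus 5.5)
Your main line is correct and is essentially the paper's route. The paper just cites Bass's stability theorems for rings of stable rank \( 1 \) (Bass's book, theorems V.4.1(b) and V.4.2). You likewise prove surjectivity by column reduction, cite Vaserstein for \( \GenLin(n, S) \cap \Elem(n + 1, S) = \Elem(n, S) \) with \( n \geq 2 \), and get abelianness from the Whitehead lemma plus iterated injectivity.

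One small correction. Normality of \( \Elem(n, S) \) does not follow from \( \mathrm{diag}(a, a^{-1}) \in \Elem(2, S) \) alone. It follows from \( \GenLin(n, S) = \Elem(n, S)\, \GenLin(1, S) \), which your surjectivity argument gives, together with the fact that diagonal matrices normalize \( \Elem(n, S) \). It also follows afterwards from \( [\GenLin(n, S), \GenLin(n, S)] \leq \Elem(n, S) \).

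The ``alternative'' reduction modulo \( \Jac(S) \) does not prove injectivity and should be dropped. Suppose you apply Dieudonn\'e's theorem over \( S / \Jac(S) \), lift, and then use Gauss elimination and the Whitehead lemma. You are left with \( \mathrm{diag}(d, 1, \ldots, 1) \in \Elem(n + 1, S) \) with \( d \in 1 + \Jac(S) \), and you still have to show it lies in \( \Elem(n, S) \). Its image over \( S / \Jac(S) \) is the identity, so the division ring case says nothing about it.

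This leftover question is the real content of injective stability. Given \( \GenLin(n, S) = \Elem(n, S)\, \GenLin(1, S) \), injectivity for all \( n \geq 2 \) is equivalent to \( \Elem(m, S) \cap \GenLin(1, S) \) being independent of \( m \geq 2 \). Vaserstein identifies this subgroup with the one generated by the elements \( (1 + a b)(1 + b a)^{-1} \). So the difficulty cannot be avoided by passing to the semisimple quotient.
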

\begin{proof}
	This is a special case of
	\cite[theorems V.4.1(b) and V.4.2]{k-theory}.
\end{proof}

\begin{lemma}
	\label{uni-ab-k1}
	Let
	\( S \) be a semilocal odd form ring and
	\( (M, B, q) \) a quadratic
	\( S \)-module. Then
	\( \KUnit_1(2 \ell, M) \to \KUnit_1(2 \ell + 2, M) \) is surjective for
	\( \ell \geq 1 \) and injective for
	\( \ell \geq 2 \). If in addition
	\( M_S \) is free of finite rank and
	\( B \) is non-degenerate, then
	\( \KUnit_1(4, M) \cong \KUnit_1(6, M) \cong \ldots \) is abelian.
\end{lemma}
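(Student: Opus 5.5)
This lemma is the unitary analogue of lemma \ref{lin-ab-k1}. I would deduce it from known stability results rather than argue from scratch, in parallel with the linear case.

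\textbf{Step 1: surjectivity.} Surjectivity of \( \KUnit_1(2 \ell, M) \to \KUnit_1(2 \ell + 2, M) \) for \( \ell \geq 1 \) is a surjective stability statement for odd unitary groups. Over a semilocal ring \( S \) the relevant unitary stable rank is small, at most \( 1 \) in Bak's sense. So every element of \( \Unit(2 \ell + 2, M) \) can be reduced by \( \ElemUnit(2 \ell + 2, M) \) to one fixing the outermost hyperbolic pair \( e_{- \ell - 1}, e_{\ell + 1} \).

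I would prove this directly. Take \( g \in \Unit(2 \ell + 2, M) \) and look at the column \( v = g e_{\ell + 1} \). This is a unimodular isotropic vector. Using semilocality, i.e. that \( S / \Jac(S) \) is a product of matrix rings over division rings, the usual stable range argument modulo the radical makes some coordinate \( v_i \) with \( i \neq 0, \pm(\ell + 1) \) invertible. Here one needs \( \ell \geq 1 \) so that such a coordinate exists. Elementary short root transvections then move \( v \) to \( e_{\ell + 1} \). After that, a long and an ultrashort root element adjust the partner vector \( g e_{- \ell - 1} \), so the result lies in \( \Unit(2 \ell, M) \).

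\textbf{Step 2: injectivity and abelianness.} Injectivity for \( \ell \geq 2 \) is the harder stability statement: if \( g \in \Unit(2 \ell, M) \cap \ElemUnit(2 \ell + 2, M) \), one must show \( g \in \ElemUnit(2 \ell, M) \). I would cite the odd unitary injective stability theorem; for semilocal rings this is the odd form ring analogue of Bass' argument behind \cite[theorems V.4.1(b) and V.4.2]{k-theory}, see e.g.\ the stability results in \cite{odd-uni-gro}. Alternatively, one could use a Whitehead-lemma type argument based on normality of \( \ElemUnit(2 \ell, M) \) for \( \ell \geq 2 \) from \cite[theorem 4]{odd-uni-gro}.

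For the second claim, assume \( M \) is free with non-degenerate \( B \). Then \( \Unit(2 \ell, M) \) is the unitary group of a non-degenerate quadratic form on a free module, and by Whitehead's lemma the stable group \( \KUnit_1 \) is abelian. Concretely, for \( g, h \in \Unit(2 \ell, M) \), the commutator \( [g, h] \) becomes elementary after embedding in \( \Unit(4 \ell + \mathrm{rk}\, M, M) \) via the standard trick: \( \mathrm{diag}(g, g^{-1}) \) is elementary in a doubled hyperbolic space. Non-degeneracy and freeness are used to realize \( M \perp M^{-} \) (the second copy with negated form) inside a hyperbolic module \( \Hyp(S^m) \). By the stability isomorphisms from Step 2 the commutator is then already elementary at level \( \ell \geq 2 \).

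\textbf{Main obstacle.} I expect injectivity to be the hardest step. Bounding the odd unitary stable rank of a semilocal form ring and checking the hypotheses of the cited stability theorem for an arbitrary odd form parameter will take the most care. For the abelianness I also need the doubling trick to work with a possibly degenerate-looking odd form parameter, which is exactly why the lemma assumes non-degeneracy.
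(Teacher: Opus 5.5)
Your stability part matches the paper, which simply cites \cite[theorem 5]{odd-uni-gro} and \cite[theorem 1.1]{bc-inj-sta}; your column-reduction sketch is the standard argument behind such results. Two corrections there. Injectivity is the content of \cite[theorem 1.1]{bc-inj-sta}. Your alternative ``Whitehead-lemma type argument based on normality'' does not prove injectivity: normality of \( \ElemUnit(2\ell, M) \) says nothing about \( \Unit(2\ell, M) \cap \ElemUnit(2\ell + 2, M) \).

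The genuine gap is in the abelianness step. The stability maps only add hyperbolic planes to \( V = M \perp \Hyp(S^\ell) \), whereas your doubled module \( V \perp V^{-} \) adds \( M^{-} \perp \Hyp(S^\ell) \). That module is hyperbolic, so in general it is not isometric to any \( M \perp \Hyp(S^k) \): this would force \( M \) to be stably hyperbolic. In particular it is not ``\( \Unit(4\ell + \mathrm{rk}\, M, M) \)'' with \( V \) in standard position, and the stability isomorphisms do not apply to it. To land in the family \( \Unit(2k, M) \) you must add a further copy of \( M \). But then the elementarity you would get from a Whitehead lemma refers to a hyperbolic structure in which the original \( M \) has been absorbed into the hyperbolic part and the new copy plays the role of \( M \). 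That is not the elementary subgroup of the standard stabilization. To compare the two you need an isometry moving one copy of \( M \) onto the other, together with a reason why conjugation by it preserves the elementary subgroup. That is the nontrivial input, and ``by the stability isomorphisms'' does not supply it. Moreover, ``\( \mathrm{diag}(g, g^{-1}) \) is elementary'' is not the linear Whitehead factorization here, since the block unitriangular factors are not unitary. For non-hyperbolic \( V \) and an arbitrary odd form parameter this claim needs its own proof.

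The paper supplies exactly the missing input and then needs no Whitehead lemma at all.
\begin{itemize}
\item Using lemma \ref{qua-eve} and non-degeneracy, it shows \( M \perp M^{-} \) is hyperbolic (the submodules \( N_1 \), \( N_2 \)). Hence \( e_{-1} S \oplus M \oplus e_1 S \) is an orthogonal summand of \( \Hyp(S^{\ell - 1}) \) for large \( \ell \).
\item Witt cancellation \cite[theorem 1]{ove-uni} gives \( h \in \Unit(2\ell, M) \) carrying this summand into \( \Hyp(S^{\ell - 1}) \).
\item Surjective stability allows replacing \( h \) by an elementary element with the same property.
\item For \( g_1, g_2 \) supported on the first summand, \( [g_1, g_2] \equiv [g_1, \up{h}{g_2}] = 1 \) modulo \( \ElemUnit(2\ell, M) \), because the two factors have orthogonal supports.
\end{itemize}
Surjective stability and normality reduce arbitrary commutators to this case, and injectivity brings the conclusion down to \( \KUnit_1(4, M) \). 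If you add such an \( h \) (or normality plus an explicit isometry exchanging two copies of \( M \) in \( M \perp M^{-} \perp M \)), your doubling becomes superfluous and the argument collapses to the paper's.
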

\begin{proof}
	The first claim is a special case of
	\cite[theorem 5]{odd-uni-gro} and
	\cite[theorem 1.1]{bc-inj-sta}. To prove the second claim we only have to check
	\(
		[ \Unit(1, M), \Unit(1, M) ]
		\leq
		\ElemUnit(2 \ell, M)
	\)
	for sufficiently large
	\( \ell \). Consider the quadratic module
	\( M' = M \oplus M \) with the forms
	\begin{align*}
			B( m_1 \oplus m_2, n_1 \oplus n_2 )
			&=
			B(m_1, n_1) - B(m_2, n_2),
		&
			q( m_1 \oplus m_2 )
			&=
			q(m_1) - q(m_2).
	\end{align*}
	This module is hyperbolic. Namely, choose a sesquilinear form
	\( Q \colon M \times M \to S \) generating
	\( B \) and
	\( q \) using lemma
	\ref{qua-eve}. Since
	\( B \) is non-degenerate, there is a linear map
	\( f \colon M \to M \) such that
	\( B(f(m), n) = - Q(m, n) \), so
	\( B(m, f(n)) = - Q(n, m)^* \lambda \). Consider the submodules
	\begin{align*}
			N_1
			&=
			\{ m \oplus m \mid m \in M \},
		&
			N_2
			&=
			\{ (m + f(m)) \oplus f(m) \mid m \in M \}
	\end{align*}
	of
	\( M' \). Clearly, 
	\( M' = N_1 \oplus N_2 \) and the forms on
	\( M' \) are generated by the sesquilinear form
	\[
		Q'\bigl(
				(m_1 \oplus m_1)
				+
				\bigl(
					( m_2 + f(m_2) ) \oplus f(m_2)
				\bigr)
			,
				(m_1' \oplus m_1')
				+
				\bigl(
					( m_2' + f(m_2') ) \oplus f(m_2')
				\bigr)
		\bigr)
		=
		B(m_1, m_2').
	\]
	This sesquilinear form induces an isomorphism
	\( N_1 \cong \Hom_S(N_2, S) \).
	
	It follows that
	\( e_{- 1} S \oplus M \oplus e_1 S \) is a direct summand of
	\(
		\Hyp( S^{\ell - 1} )
		=
		\bigoplus_{i = - \ell}^{- 2}
			e_i S
		\oplus
		\bigoplus_{i = 2}^\ell
			e_i S
	\) for sufficiently large
	\( \ell \geq 2 \) via an embedding
	\(
		u
		\colon
		e_{- 1} S \oplus M \oplus e_1 S
		\to
		\Hyp( S^{ \ell - 1 } )
	\). By Witt cancellation theorem
	\cite[theorem 1]{ove-uni} there is
	\( h \in \Unit(2 \ell, M) \) such that
	\( h|_{ e_{- 1} S \oplus M \oplus e_1 S } = u \). Again applying stability we can assume that
	\( h \in \ElemUnit(2 \ell, M) \). Now if
	\( g_1, g_2 \in \Unit(1, M) \), then
	\[
		[g_1, g_2]
		\equiv
		[ g_1, \up{h}{g_2} ]
		=
		1
		\pmod{ \ElemUnit(2 \ell, M) }.
		\qedhere
	\]
\end{proof}

To deal with the case
\( \TitsIndex{E}{1}{6}{2}{28} \) in theorem
\ref{sol-sem-iso} below we need the following lemma about orthogonal groups of isotropic rank
\( 1 \). Let
\( M \) be a finite projective module of constant even rank over
\( K \) with a traditional non-degenerate quadratic form
\( q \colon M \to K \). Recall that
\( \SpecOrth(q) \leq \Orth(q) \) is the subgroup of elements with trivial Dickson invariant, it is a normal subgroup containing
\( [ \Orth(q), \Orth(q) ] \). A
\textit{reflection} in
\( \Orth(q) \) is an element
\( s_v \colon m \mapsto m - \frac{ B(m, v) }{ q(v) } v \), where
\( v \in M \) and
\( q(v) \in K^* \). Let
\( \Refl(q) \leq \Orth(q) \) be the subgroup generated by reflections and
\( \Refl^{+}(q) \leq \Refl(q) \cap \SpecOrth(q) \) the subgroup generated by products of pairs of reflections.

\begin{lemma}
	\label{ort-ab-k1}
	Let
	\( M \neq 0 \) be a free module of finite even rank over a semilocal ring
	\( K \) with a traditional non-degenerate quadratic form
	\( q \colon M \to K \). Then
	\( [ \Orth(q), \Orth(q) ] \leq \Refl^{+}(q) \) and the factor-group
	\( \mathrm{KO}_1(2, q) \) of
	\( \Orth( q \perp \Hyp(K) ) \) by its elementary subgroup is abelian.
\end{lemma}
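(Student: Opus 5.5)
The plan has two parts. First, show that \( \Orth(q) \) is generated by reflections over a semilocal ring, so that its commutators are products of an even number of reflections. Second, show that in \( \Orth( q \perp \Hyp(K) ) \) products of two reflections become elementary modulo the commutator subgroup. Together these give abelianness of \( \mathrm{KO}_1(2, q) \).

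For the first claim I would use the known generation theorem for orthogonal groups over semilocal rings: if \( M \) is free and \( q \) is non-degenerate, then \( \Orth(q) = \Refl(q) \). The reference is Baeza's book on quadratic forms over semilocal rings. There are exceptional cases with small residue fields, notably \( \Field_2 \) and rank \( 2 \) or \( 4 \), and these would need to be checked or handled directly.

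Granting generation, \( \Refl(q) \) modulo \( \Refl^{+}(q) \) is generated by the image of a single reflection of order \( 2 \), since \( s_v s_w = (s_v s_w s_v s_u)\,(s_u s_v)\,\cdots \). More simply, \( \Refl^{+}(q) \) has index at most \( 2 \), because any two reflections are congruent modulo \( \Refl^{+}(q) \): indeed \( s_v s_w \in \Refl^{+}(q) \) and \( s_w^{2} = 1 \). Hence \( \Orth(q) / \Refl^{+}(q) \) is abelian, which gives \( [ \Orth(q), \Orth(q) ] \leq \Refl^{+}(q) \).

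If the generation theorem fails in the small residue field cases, I would instead argue via the spinor norm and Dickson invariant. Over a semilocal ring the homomorphism \( \Orth(q) \to \Int/2 \times K^*/K^{*2} \) has kernel equal to the commutator subgroup. This is again a Baeza-type result, and one can read off that the reflection-generated part contains this kernel.

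For the second claim, put \( M' = M \perp \Hyp(K) \), so that \( \Orth(M') = \Orth(2, q) \) in the notation \( \Unit(2\ell, M) \). By Witt cancellation and transitivity of the elementary subgroup on unimodular isotropic vectors over semilocal rings, which the paper uses via \cite{ove-uni} and \cite{odd-uni-gro}, one gets \( \Orth(M') = \ElemUnit(2, M)\, \Orth(q) \). So it suffices to show that for \( g_1, g_2 \in \Orth(M') \) the commutator \( [g_1, g_2] \) lies in \( \ElemUnit(2, M) \); this will use that \( \ElemUnit(2, M) \) is normal here, a point addressed below. Reducing to \( g_i \in \Orth(q) \), we have \( [g_1, g_2] \in \Refl^{+}(q) \) by the first claim.

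It then remains to show \( s_v s_w \in \ElemUnit(2, M) \) for \( v, w \in M \) with \( q(v), q(w) \in K^* \). A product of two reflections fixes a hyperbolic plane and is a rotation. Inside \( M' \), the element \( s_v s_{v'} \), where \( v' = e_{-1} + q(v) e_1 \) satisfies \( q(v') = q(v) \), is an explicit product of short root elements \( T_1(y, z) \), \( T_{-1}(y', z') \). This is the standard computation expressing a reflection swap as a Weyl-type element. Hence \( s_v \equiv s_{v'} \), and likewise \( s_w \equiv s_{w'} \). Moreover \( s_{v'} s_{w'} \) lies in the orthogonal group of the hyperbolic plane, which is essentially a torus element, and can be handled similarly modulo the elementary subgroup. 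So \( s_v s_w \) is elementary modulo elements commuting appropriately.

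The main obstacle I expect is this explicit rank-one computation, together with normality of \( \ElemUnit(2, M) \): the paper's normality result requires \( \ell \geq 2 \). I would first try the direct formula \( T_1(v, -q(v))\, T_{-1}(v q(v)^{-1}, -q(v)^{-1})\, T_1(v, -q(v)) = s_v \cdot (\text{hyperbolic swap}) \) and check it by evaluating on basis vectors.
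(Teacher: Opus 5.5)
Both halves of your plan have genuine gaps.

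\textbf{First claim.} Your argument rests on $\Orth(q)=\Refl(q)$ over $K$. This fails in exactly the cases the lemma is designed for.
For $K=\Field_2$ and $q\cong\Hyp(\Field_2^2)$ one has $\Orth(q)\cong(\Sym_3\times\Sym_3)\rtimes\Int/2\Int$ of order $72$, with $\Refl(q)$ of index $2$.
For disconnected $K$ (e.g.\ $\mathbb{Q}\times\mathbb{Q}$) every reflection has nontrivial Dickson invariant on every component, so $(s_v,1)\notin\Refl(q)$.
You flag the exceptions but do not handle them, and your fallback is false precisely there. Over $\Field_2$ the spinor norm is trivial, so the kernel of $\Orth(q)\to\Int/2\Int\times K^*/K^{*2}$ is $\SpecOrth(q)\cong\Sym_3\times\Sym_3$ of order $36$. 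But $[\Orth(q),\Orth(q)]$ lies in $\Refl^{+}(q)$, which has order $18$.
The paper never needs generation over $K$. It proves $[\Orth,\Orth]\leq\Refl^{+}$ over fields, by Cartan--Dieudonn\'e plus a direct check in the exceptional finite group. For semilocal $K$ it then uses two facts: $\Refl^{+}(q)\to\prod_i\Refl^{+}(q_{F_i})$ is surjective, and the kernel of $\Orth(q)\to\Orth(q_{K/\Jac(K)})$ lies in $\Refl^{+}(q)$ (Knebusch). Hence every commutator is an element of $\Refl^{+}(q)$ times an element of that kernel.

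\textbf{Second claim.} Your target $s_vs_w\in\ElemUnit(2,M)$ is false. Elementary elements have trivial spinor norm, while $\mathrm{diag}(1,s_vs_w,1)$ has spinor norm $q(v)q(w)$. For a counterexample take $K=\mathbb{Q}$, $q=x^2+y^2$, $v=(1,0)$, $w=(1,1)$.
For the same reason $\Orth(M')=\ElemUnit(2,M)\,\Orth(q)$ fails. With this $q$, the element $\mathrm{diag}(3,1,1/3)$ is not in the right-hand side, since its spinor norm $3$ is not a sum of two rational squares. So the transitivity on unimodular isotropic vectors that you invoke fails too.
You need all diagonal matrices $\mathrm{diag}(a,g,a^{-1})$, as in the paper's Gauss decomposition. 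Since these normalize the elementary subgroup, this also settles the normality issue you raise.
What the Weyl elements $w_v=T_{+}(v)\,T_{-}(v/q(v))\,T_{+}(v)$ actually give is $w_vw_w^{-1}=\mathrm{diag}(q(v)/q(w),\,s_vs_w,\,q(w)/q(v))$ and $w_vw_{vb}^{-1}=\mathrm{diag}(b^{-2},1,b^{2})$. So $\Refl^{+}(q)$ is elementary only modulo the torus $\{\mathrm{diag}(a,1,a^{-1})\}$, and the torus itself is not elementary.
The missing step is to show that for a commutator $[g_1,g_2]=\prod_i s_{v_i}s_{w_i}$ the accumulated factor $\prod_i q(v_i)/q(w_i)$ is a square. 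One way is to use that the spinor norm $\Orth(q)\to K^*/K^{*2}$ is a homomorphism and hence kills commutators. The phrase ``elementary modulo elements commuting appropriately'' does not supply this.
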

\begin{proof}
	Firstly suppose that
	\( K \) is a field. By the classical Cartan--Dieudonn\'e theorem
	\( \Orth(q) = \Refl(q) \) and
	\( \SpecOrth(q) = \Refl^{+}(q) \) unless
	\( K \cong \Field_2 \) has two elements and
	\( M \cong \Hyp(K^2) \). In the exceptional case
	\(
		\Orth(q)
		\cong
		( \Sym_3 \times \Sym_3 )
		\rtimes
		\Int / 2 \Int
	\), the six reflections correspond to the elements
	\( ( \sigma, \sigma^{- 1} ) \rtimes [1] \) (the unique conjugacy class in
	\( ( \Sym_3 \times \Sym_3 ) \rtimes [1] \) with six elements), and under this isomorphism
	\begin{align*}
			\SpecOrth(q) &\cong \Sym_3 \times \Sym_3,
		\\
			\Refl(q)
			&\cong
			\{
				(\sigma, \tau) \in \Sym_3 \times \Sym_3
				\mid
				\sigma \tau
				\text{ is even}
			\}
			\rtimes
			\Int / 2 \Int,
		\\
			\Refl^{+}(q)
			&\cong
			\{
				(\sigma, \tau) \in \Sym_3 \times \Sym_3
				\mid
				\sigma \tau
				\text{ is even}
			\}.
	\end{align*}
	So for all fields
	\( [ \Orth(q), \Orth(q) ] \leq \Refl^{+}(q) \).
	
	Now return to the general case. Let
	\( \Jac(K) \leqt K \) be the Jacobson radical, so
	\( K / \Jac(K) = \prod_{i = 1}^n F_i \) is a product of fields. The map
	\[
		\Orth(q)
		\to
		\Orth( q_{ K / \Jac(K) } )
		=
		\prod_{i = 1}^n
			\Orth( q_{F_i} )
	\]
	induces the surjection
	\[
		\Refl^{+}(q)
		\to
		\prod_{i = 1}^n
			\Refl^{+}( q_{F_i} ).
	\]
	By
	\cite[theorem 4.2]{ort-sem} the kernel of
	\( \Orth(q) \to \Orth( q_{ K / \Jac(K) } ) \) is contained in
	\( \Refl^{+}(q) \), so the first claim follows from the field case.
	
	To prove the second claim note that
	\begin{align*}
			T_{+}(u)
			&=
			\Bigl(
				\begin{smallmatrix}
						1 & - B(u, {-}) & - q(u)
					\\
						0 & 1 & u
					\\
						0 & 0 & 1
				\end{smallmatrix}
			\Bigr),
		&
			T_{-}(u)
			&=
			\Bigl(
				\begin{smallmatrix}
						1 & 0 & 0
					\\
						u & 1 & 0
					\\
						- q(u) & - B(u, {-}) & 1
				\end{smallmatrix}
			\Bigr),
	\end{align*}
	for
	\( u \in M \) lie in the elementary subgroup of
	\( e_{- 1} K \oplus M \oplus e_1 K \) and
	\[
		T_{+}(v)\, T_{-}( v / q(v) )\, T_{+}(v)
		=
		\Bigl(
			\begin{smallmatrix}
					0 & 0 & q(u)
				\\
					0 & S_v & 0
				\\
					- 1 / q(u) & 0 & 0
			\end{smallmatrix}
		\Bigr).
	\]
	By the Gauss decomposition
	\( \Orth( q \perp \Hyp(K) ) \) is generated by
	\( T_{\pm}(u) \) and diagonal matrices (here we use that
	\( M \neq 0 \)), so the result follows from the first claim.
\end{proof}

Another non-trivial case in theorem
\ref{sol-sem-iso} below is
\( \TitsIndex{E}{}{7}{2}{31} \), this requires even more preliminary results. We need two classes of form algebras over
\( K \). Let us call a form ring
\( S \) a
\textit{%
	quadratic form
	\( K \)-algebra%
} if
\( S \) is a quadratic \'etale
\( K \)-algebra with the standard involution (the only geometrically non-trivial
\( K \)-linear involution),
\( \lambda = - 1 \), and
\( \Lambda = \Lambda_{\min} = \Lambda_{\max} = K \). In other words,
\( S \) is a twisted form of
\( K \times K \) with
\begin{align*}
		(x, y)^* &= (y, x),
	&
		\lambda &= - 1,
	&
		\Lambda &= \{ (x, x) \mid x \in K \}.
\end{align*}
Similarly, a form ring
\( S \) is called a
\textit{%
	quaternion form
	\( K \)-algebra%
} if
\( S \) is a quaternion
\( K \)-algebra with the standard involution
\( x^* = \mathrm{tr}(x) - x \),
\( \lambda = - 1 \), and
\( \Lambda = \Lambda_{\min} = K \). Such form algebras are precisely twisted forms of
\( \Mat(2, K) \) with
\begin{align*}
		\sMat{x}{y}{z}{w}^* &= \sMat{w}{- y}{- z}{x},
	&
		\lambda &= - 1,
	&
		\Lambda &= \{ \sMat{x}{0}{0}{x} \mid x \in K \}.
\end{align*}

\begin{lemma}
	\label{her-dia}
	Let
	\( S \) be a quadratic or quaternion form algebra over semilocal
	\( K \) and
	\( (M, B, q) \) a quadratic module over
	\( S \). Suppose that
	\( M \) is projective of constant finite rank
	\( n \) over
	\( S \) (i.e.\ of rank
	\( 2 n \) or
	\( 4 n \) over
	\( K \) respectively) and
	\( B \) is non-degenerate. Then
	\( M \) has an orthogonal basis
	\( e_1, \ldots, e_n \in M \), i.e.\ %
	\( B(e_i, e_j) = 0 \) for
	\( i \neq j \).
\end{lemma}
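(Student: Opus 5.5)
We argue by induction on \( n \), at each step splitting off an orthogonal summand of rank \( 1 \) over \( S \). The key step is to find \( e \in M \) such that \( B(e, e) \in S^* \), i.e.\ is invertible in \( S \). Given such \( e \), the map \( m \mapsto e\, B(e, e)^{- 1} B(e, m) \) is an \( S \)-linear projection of \( M \) onto \( e S \). Hence \( M = e S \oplus e^\perp \), where \( e S \cong S \) is free and \( e^\perp \) is projective over \( S \). Its rank over \( S \) is constant and equal to \( n - 1 \): this can be checked after passing to \( K / \Jac(K) \), or by counting \( K \)-ranks. The restriction of \( B \) to \( e^\perp \) is again non-degenerate, because \( B \) is non-degenerate and the decomposition is orthogonal. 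The restriction of \( q \) makes \( e^\perp \) a quadratic module, so induction applies and produces the required orthogonal basis.

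Note that \( B(e, e) = q'(e) + q'(e)^* \lambda = q'(e) - q'(e)^* \) is a skew element with respect to the involution. For the quadratic algebra \( (x, y)^* = (y, x) \), skew elements have the form \( (x, - x) \), and such an element is invertible iff \( x \in K^* \). For the quaternion algebra with the standard involution, skew elements are the pure quaternions \( x \) with \( \mathrm{tr}(x) = 0 \), and such an \( x \) is invertible iff its reduced norm \( \mathrm{nrd}(x) = - x^2 \) lies in \( K^* \). So in either case we must find \( e \) for which a certain \( K \)-valued polynomial function \( N(e) \) of degree \( 2 \) or \( 4 \) on \( M \) is a unit, namely \( x \) or \( \mathrm{nrd}(B(e, e)) \).

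Since \( K \) is semilocal, invertibility can be tested modulo \( \Jac(K) \). By the Chinese remainder theorem \( M \to M / M \Jac(K) = \prod_i M \otimes F_i \) is surjective, so it suffices to find a good vector over each residue field \( F_i \) separately and lift. This reduces the problem to \( K = F \) a field, where \( S \) is \( F \times F \), a quadratic field extension, \( \Mat(2, F) \), or a quaternion division algebra.

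The main obstacle is the field case, and we treat it by cases on \( S \).

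\textbf{\( S \) a division algebra.} Non-degeneracy gives \( e, e' \) with \( B(e, e') \neq 0 \). If both \( B(e, e) \) and \( B(e', e') \) vanish, we look at \( e + e' x \). Its value is
\[
	B(e, e' x) + B(e, e' x)^* \lambda = y - y^*,
	\qquad
	y = B(e, e') x.
\]
Because \( y \) runs over all of \( S \), and \( y \mapsto y - y^* \) is non-zero (it hits \( \Lambda_{\min} \), which is not all of the symmetric part), we can make this value a non-zero skew element, which is automatically invertible in a division algebra.

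\textbf{\( S \) split.} Here \( S = F \times F \) or \( S = \Mat(2, F) \), and invertibility is the non-vanishing of the degree-\( 2 \) or degree-\( 4 \) polynomial \( N \). If \( F \) is infinite, we show \( N \) is not identically zero by a hyperbolic-pair argument as above, and then a non-vanishing point exists. For \( S = \Mat(2, F) \) we would instead use Morita equivalence, which turns a hermitian form over \( \Mat(2, F) \) with the symplectic involution into an alternating (skew) form over \( F \) on a space of dimension \( 2 n \). Then \( B(e, e) \) invertible means that the \( 2 \)-dimensional subspace attached to \( e \) is non-degenerate, which always exists for a non-degenerate alternating form. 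For \( S = F \times F \), \( M = M_1 \times M_2 \) with \( B \) a perfect pairing \( M_1 \times M_2 \to F \), and we just pick \( e = (m_1, m_2) \) with \( B(m_1, m_2) \neq 0 \). These direct descriptions also handle finite fields, including \( \Field_2 \), without any counting argument. I expect the care needed here to be in making the Morita and pairing identifications precise in the chosen conventions (\( \lambda = - 1 \)).
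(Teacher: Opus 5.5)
Your overall strategy works: split off \( e S \) for a vector with \( B(e, e) \in S^* \), find such a vector over each residue field and lift it, then argue case by case over a field. However, the Morita identification you flag as delicate is wrong in odd characteristic. For \( \lambda = -1 \) and the standard involution on \( \Mat(2, F) \), Morita equivalence gives \emph{symmetric} bilinear forms on \( F^{2n} \); alternating forms correspond to \( \lambda = 1 \).

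Already for \( n = 1 \) you can see this. A hermitian form is \( B(x, y) = x^* h y \) with \( h^* = -h \), i.e.\ \( \mathrm{tr}(h) = 0 \). Writing \( x^* = J^{-1} x^{t} J \) with \( J = \sMat{0}{1}{-1}{0} \), you get \( B(x, y) = J^{-1} x^{t} (J h) y \), and \( J h \) is symmetric. This is consistent with the paper obtaining groups of type \( \mathsf{D} \) from quaternion form algebras.

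In general, if \( e \) corresponds to a pair of vectors \( (m_1, m_2) \) in \( F^{2n} \) and \( C \) is the symmetric form, then \( \det B(e, e) = C(m_1, m_1) C(m_2, m_2) - C(m_1, m_2)^2 \). In characteristic \( 2 \) your picture happens to be right. There \( B(e, e) = q'(e) + q'(e)^* = \mathrm{tr}(q'(e)) \) is a scalar, which forces \( C \) to be alternating and reduces invertibility to \( C(m_1, m_2) \neq 0 \). In odd characteristic you instead need a \( 2 \)-plane on which \( C \) is non-degenerate, for example one spanned by two orthogonal anisotropic vectors. So the conclusion survives, but the justification has to change.

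Two smaller corrections:
\begin{itemize}
\item In the division case, \( y \mapsto y - y^* \) is non-zero for a different reason than the one you give. It is not about \( \Lambda_{\min} \), which here is \( \{ x + x^* \} = K \) and consists of symmetric elements. In odd characteristic it holds because \( * \neq \mathrm{id} \); in characteristic \( 2 \) it holds because \( y - y^* = \mathrm{tr}(y) \) and the trace is surjective.
\item Drop the sentence about infinite \( F \). The hyperbolic-pair computation only gives \( B(e, e) \neq 0 \), which does not imply invertibility in \( \Mat(2, F) \). Your explicit descriptions of the split cases already cover every field.
\end{itemize}

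Compared with the paper, the route is genuinely different. The paper declares \( K \) a field without comment and takes a basis. It then looks for \( e_1 + \sum_{i \geq 2} e_i s_i \) with non-zero self-pairing: a non-zero quadratic polynomial vanishing on all of \( F^N \) forces \( F \cong \Field_2 \), and an explicit computation shows \( B \equiv 0 \) in that case.

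Your version does three things differently:
\begin{itemize}
\item It makes the semilocal reduction precise: you lift a single vector through \( M \to M / M \Jac(K) \) and run the induction over \( K \) itself, whereas the paper leaves the passage from the field case back to semilocal \( K \) implicit.
\item It replaces the counting argument and the \( \Field_2 \) computation by a structural split into division and split cases, which is uniform in the size of the field.
\item It asks for the correct condition, \( B(e, e) \in S^* \), which is what splitting off \( e S \) requires. Non-vanishing suffices for division algebras, for \( F \times F \) (skew elements are \( (x, -x) \)), and in characteristic \( 2 \) (where \( B(e, e) \) is a scalar). It does not suffice for \( \Mat(2, F) \) in odd characteristic, where a non-zero trace-zero matrix can be nilpotent. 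Your treatment of that case, once corrected as above, covers a point the paper's argument passes over.
\end{itemize}
The paper's approach is shorter and avoids identifying \( S \) over each residue field.
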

\begin{proof}
	In both cases
	\( S \) is a composition
	\( K \)-algebra with the standard involution,
	\( \lambda = - 1 \), and
	\( \Lambda = K \). Without loss of generality
	\( K \) is a field, so
	\( M \) has some basis
	\( e_1, \ldots, e_n \in M \). Arguing by induction on
	\( n \geq 1 \) it suffices to prove that there exist
	\( s_2, \ldots, s_n \in S \) such that
	\[
		B\Bigl(
			s_1 + \sum_{i = 2}^n e_i s_i,
			s_1 + \sum_{i = 2}^n e_i s_i
		\Bigr)
		\neq
		0.
	\]
	
	Assume the contrary. If we chose a basis of
	\( S \), then the above expression becomes a set of polynomials in coordinates of
	\( s_i \) in some basis of
	\( S \) over
	\( K \). If all of these polynomials are zero, then the hermitian form is identically zero contradicting non-degeneracy. Thus some of them is a non-zero polynomial of total degree
	\( 2 \) taking only zero values. It follows that
	\( K \cong \Field_2 \) has two elements.
	
	We have
	\begin{align*}
			B(e_1, e_1) &= 0,
		\\
			s^* B(e_i, e_j) t &= t^* B(e_i, e_j)^* s
			\text{ for }
			i > j > 1,
			\tag{%
				\( * \)%
			}
		\\
			s^* B(e_i, e_i) s
			&=
			B(e_1, e_i) s + s^* B(e_1, e_i)^*
			\text{ for }
			i > 1.
			\tag{%
				\( * * \)%
			}
	\end{align*}
	Taking
	\( s = t = 1 \) in (%
		\( * \)%
	) we get
	\( B(e_i, e_j)^* = B(e_i, e_j) \), and next taking only
	\( t = 1 \) we obtain
	\( B(e_i, e_j) s = s^* B(e_i, e_j) \). In the quaternion case this means that
	\(
		B(e_i, e_j) s t
		=
		t^* s^* B(e_i, e_j)
		=
		B(e_i, e_j) t s
	\), but additive commutators
	\( [t, s] \) generate
	\( S \) as a right ideal, so
	\( B(e_i, e_j) = 0 \). In the quadratic case the trace map
	\( S \to K,\, s \mapsto s + s^* \) is surjective and again
	\( B(e_i, e_j) = 0 \).
	
	Now consider (%
		\( * * \)%
	). Linearizing this identity we get
	\( s^* B(e_i, e_i) t = t^* B(e_i, e_i) s \), so
	\( B(e_i, e_i) = 0 \) by the above argument. But then (%
		\( * * \)%
	) reduces to
	\( B(e_1, e_i) s + s^* B(e_1, e_i)^* \) and again
	\( B(e_1, e_i) = 0 \). So
	\( B \) is identically zero, a contradiction.
\end{proof}

The next result is proved in a more general form in
\cite[theorem 4.6]{uni-sem}, but assuming that
\( S \) has no residue fields with
\( 2 \) elements. We give a simpler proof only for quadratic form algebras. Recall that is
\( M \) is a quadratic module over a quadratic form algebra
\( S \) such that
\( M_S \) is free of finite rank and the hermitian form is non-degenerate, then
\( \SpecUnit(M) = \Unit(M) \cap \SpecLin(M) \), i.e.\ the group of unitary matrices with the determinant
\( 1 \in S \). As for unitary and elementary groups,
\( \SpecUnit(n, M) = \SpecUnit( M \perp \Hyp(S^n) ) \).

\begin{lemma}
	\label{2a-ab-k1}
	Let
	\( S \) be a quadratic form algebra over semilocal
	\( K \) and
	\( M \) a quadratic module over
	\( S \). Suppose that the hermitian form is non-degenerate and the rank of
	\( M \) over
	\( K \) is constant. Then
	\( \ElemUnit(1, M) = \SpecUnit(1, M) \) and
	\( \KUnit_1(1, M) \) is abelian.
\end{lemma}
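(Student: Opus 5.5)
The plan is to prove \( \ElemUnit(1, M) = \SpecUnit(1, M) \). The second claim then follows at once: \( \SpecUnit(1, M) \) is the kernel of the determinant \( \Unit(1, M) \to S^* \), and \( S^* \) is abelian, so \( \KUnit_1(1, M) = \Unit(1, M) / \SpecUnit(1, M) \) embeds into an abelian group. Write \( e_{- 1} S \oplus M \oplus e_1 S \) for the module of \( \Unit(1, M) \). The inclusion \( \ElemUnit(1, M) \leq \SpecUnit(1, M) \) is clear, since every root element \( T_{\pm}(u) \) is unipotent and hence has determinant \( 1 \). For the reverse inclusion I first apply lemma \ref{her-dia} to get an orthogonal basis \( e_2, \ldots, e_{n + 1} \) of \( M \) (indices shifted to avoid a clash with \( e_{\pm 1} \)). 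Non-degeneracy of the diagonal Gram matrix forces every \( b_i = B(e_i, e_i) \) to be invertible, so \( M \) splits as an orthogonal sum of rank-one modules \( e_i S \).

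\textbf{Step 1 (Gauss decomposition).} Because \( K \), and hence \( S \), is semilocal, \( \ElemUnit(1, M) \) acts transitively on the unimodular vectors \( g e_1 \) that can occur. I would show this by reducing modulo \( \Jac(S) \), where it is linear algebra over products of fields, and then lifting with \( T_{\pm}(u) \). Hence every \( g \in \Unit(1, M) \) can be written as \( g = e\, h \) with \( e \in \ElemUnit(1, M) \) and \( h \) fixing \( e_1 S \) and \( e_{- 1} S \). Such an \( h \) is a product of a torus element \( \mathrm{diag}( a^{* - 1}, 1, a ) \) with \( a \in S^* \) and an element \( h_0 \in \Unit(M) \). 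This is the same Gauss-decomposition step used in lemma \ref{ort-ab-k1}.

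\textbf{Step 2 (elements in \( \ElemUnit(1, M) \)).} For each \( v \in M \) with \( B(v, v) \in S^* \), I compute the Weyl-type product \( T_{+}(v)\, T_{-}(v c)\, T_{+}(v) \) for a suitable scalar \( c \), as in the proof of lemma \ref{ort-ab-k1}. This shows that \( \ElemUnit(1, M) \) contains \( \mathrm{diag}( x, \sigma_v, y ) \), where \( \sigma_v \) is a ``quasi-reflection'' of \( M \) along \( v \) and \( x, y \) are explicit. Products of two such elements give torus elements together with unitary transformations of \( M \) that are supported on one or two coordinate lines \( e_i S \), \( e_j S \).

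\textbf{Step 3 (generation, the main obstacle).} It remains to show that the subgroup generated by the products from Step 2, together with \( \ElemUnit(1, M) \), contains every element of \( \Unit(M) \times \text{torus} \) of determinant \( 1 \). Over a field this is a Cartan--Dieudonn\'e type statement for hermitian forms over \( S \): \( \Unit(M) \) is generated by quasi-reflections \( e_i s \mapsto e_i \varepsilon s \) with \( \varepsilon^* \varepsilon = 1 \) and by rank-two rotations in the planes \( e_i S \oplus e_j S \). Tracking determinants then shows that the determinant-one part is generated by pairs of these, which Step 2 provides. I expect the real difficulty to be the passage from fields to semilocal \( K \), especially when some residue field is \( \Field_2 \) and the field argument has small exceptions. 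My first attempt would follow the proof of lemma \ref{ort-ab-k1}. I would prove the field case directly, using the diagonal basis and \( \mathrm{tr} \colon S \to K \) to reduce to computations in rank \( 1 \) and \( 2 \) and checking \( \Field_2 \) by hand. I would then show that the congruence kernel of \( \Unit(1, M) \to \Unit(1, M / M \Jac(S)) \) lies in \( \ElemUnit(1, M) \) times determinant-one torus elements. For that I would use the relative Gauss decomposition: any matrix congruent to \( 1 \) modulo the radical is a product of lower unipotent, diagonal and upper unipotent factors.
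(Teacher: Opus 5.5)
Your derivation of the second claim from \( \ElemUnit(1, M) = \SpecUnit(1, M) \) is fine, as are the inclusion \( \ElemUnit(1, M) \leq \SpecUnit(1, M) \) and Step 1. But the argument stops where the real content begins: Step 3 is a programme, not a proof.

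\textbf{Step 3 is not carried out.} It needs two things, and neither is supplied:
\begin{enumerate}
\item a Cartan--Dieudonn\'e theorem for the hermitian group \( \Unit(M) \) over every residue field, including \( \Field_2 \) with split \( S \), together with the determinant bookkeeping;
\item a lift from \( K / \Jac(K) \) to \( K \).
\end{enumerate}
The lifting tool you name does not do the job. The relative Gauss decomposition writes an element congruent to \( 1 \) as \( u^{-} \ell u^{+} \). But the Levi factor \( \ell \) still has a component in \( \Unit(M) \) congruent to \( 1 \) modulo the radical. Showing that such components lie in \( \ElemUnit(1, M) \) times torus elements is the original problem again. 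In lemma \ref{ort-ab-k1} this is exactly where Knebusch's theorem \cite[theorem 4.2]{ort-sem} was imported. The hermitian analogue \cite[theorem 4.6]{uni-sem} excludes residue fields with two elements.

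\textbf{Step 2 uses the wrong condition.} A product \( T_{+}(v, x)\, T_{-}(\ldots)\, T_{+}(v, x) \) can be monomial only if the corner entry \( x \), a lift of \( q(v) \), is invertible. This is not the same as invertibility of \( B(v, v) = x - x^* \). For \( K = \Field_2 \) and \( S = \Field_2 \times \Field_2 \) the two conditions are mutually exclusive. So in precisely the case you flag, your quasi-reflections do not arise this way.

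\textbf{The missing idea} is to reduce in the opposite direction, so that no generation theorem for \( \Unit(M) \) is needed. The paper argues as follows.
\begin{enumerate}
\item Take the orthogonal basis \( e_1, \ldots, e_n \) of \( M \) and assume \( g e_i = e_i \) for \( i < k \).
\item Write \( g e_k = e_{-} a \oplus m \oplus e_{+} b \). Then \( m \perp e_1, \ldots, e_{k - 1} \).
\item For invertible \( a \), multiply \( g \) on the left by \( T_{+}(u, q'(u))\, T_{-}\bigl( (e_k - m) a^{- 1}, \ldots \bigr) \), where \( B(u, e_k) = - a \). These root elements fix \( e_1, \ldots, e_{k - 1} \) and move the image of \( e_k \) back to \( e_k \).
\item After \( n \) steps \( g \) fixes \( M \) pointwise. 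Hence it lies in \( \SpecUnit( \Hyp(S) ) \cong \SpecLin(2, K) \), via an isomorphism respecting root subgroups.
\item Finally \( \SpecLin(2, K) = \Elem(2, K) \) for semilocal \( K \).
\end{enumerate}
So the elementary transformations straighten the anisotropic basis vectors of \( M \), not the hyperbolic pair. The only group whose generation by elementary matrices must be known at the end is \( \SpecLin(2, K) \), where it is classical.
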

\begin{proof}
	By lemma
	\ref{her-dia} the module
	\( M \) has an orthogonal basis
	\( e_1, \ldots, e_n \in M \). We prove that arbitrary
	\( g \in \Unit(1, M) \) can be multiplied by elementary transformations from the right to get an isometry stabilizing all
	\( e_i \). Recall that root elements have the form
	\begin{align*}
			T_{+}(m, x)
			&=
			\Bigl(
				\begin{smallmatrix}
						1 & B( m, {-} ) & x
					\\
						0 & 1 & m
					\\
						0 & 0 & 1
				\end{smallmatrix}
			\Bigr),
		&
			T_{-}(m, x)
			&=
			\Bigl(
				\begin{smallmatrix}
						1 & 0 & 0
					\\
						m & 1 & 0
					\\
						- x & - B( m, {-} ) & 1
				\end{smallmatrix}
			\Bigr)
	\end{align*}
	as endomorphisms of
	\( e_{-} S \oplus M \oplus e_{+} S \), where
	\( m \in M \),
	\( x \in S \), and
	\( q(m) = x + K \).
	
	Assume that
	\( g e_i = e_i \) for
	\( i < k \), so
	\( B(g e_k, e_i) = 0 \) for
	\( i < k \). Let
	\( g e_k = e_{-} a \oplus m \oplus e_{+} b \). If
	\( a \in S^* \), then
	\[
		T_{+}(u, q'(u))\,
		T_{-}\bigl(
			(e_k - m) a^{- 1},
			B(m - e_k, m) (a a^*)^{- 1} + b a^{- 1}
		\bigr)\,
		g
	\]
	stabilizes
	\( e_i \) for
	\( i \leq k \), where
	\( u \in M \) is a vector such that
	\( B(u, e_k) = - a \) and
	\( q'(u) \) is a lift of
	\( q(u) \) to
	\( S \). So the claim about
	\( g \) follows by induction.
	
	Now suppose that
	\( g \in \SpecUnit(1, M) \) stabilizes
	\( M \) pointwise. The group of such elements is isomorphic to
	\( \SpecUnit( \Hyp(S) ) \cong \SpecLin(2, K) \) and this isomorphic preserves both root subgroups. The result from the statement now follows from
	\( \SpecLin(2, K) = \Elem(2, K) \).
\end{proof}

\begin{theorem}
	\label{sol-sem-iso}
	Let
	\( G \) be a simple group scheme over a semilocal ring
	\( K \) with an isotropic pinning of rank at least
	\( 2 \). If the Tits index is
	\( \TitsIndex{E}{}{8}{2}{78} \) assume that
	\( K \) contains a field. Then the group
	\( \KFunc_1^G(K) \) is solvable with solvability length bounded by an absolute constant.
\end{theorem}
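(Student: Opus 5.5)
First reduce to a convenient form of $G$. By lemma \ref{sol-iso}, for a central subgroup $C$ of multiplicative type the kernel of $\KFunc_1^G(K) \to \KFunc_1^{G/C}(K)$ is central and the image lies in the derived subgroup. So solvability of $\KFunc_1^{G/C}(K)$ with length $n$ gives solvability of $\KFunc_1^G(K)$ with length at most $n+1$. Passing through the adjoint quotient and then up to the simply connected cover therefore lets us work with whichever isogeny class is convenient, at the cost of an absolute constant. Since $K$ is semilocal, theorem \ref{bas-ser-dim} and theorem \ref{ele-sub-ope}(1) let us assume $\Spec(K)$ is connected. Then the Tits index of the given isotropic pinning is constant, and the proof proceeds case by case along the list of Tits indices of relative rank at least $2$ \cite{tits-ind}. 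Throughout, theorem \ref{unrel} identifies $\Elem_G(K)$ with $\Elem_{G,\IsoPinning}(K)$, so we may use the root subgroups of the fixed pinning.

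For classical absolute root systems, theorem \ref{twi-iso-cla} gives an explicit model: $\GenLin(r+1, S)$ for an Azumaya algebra $S$, $\Orth(r, M)$, or $\Unit(r, M)$ for a quadratic module over an Azumaya algebra with involution. In these models the elementary subgroup matches $\Elem(r+1,S)$ or $\ElemUnit(2r,M)$. We then show that $G(K)$ modulo elementary embeds, up to abelian and central pieces coming from determinants and the spinor norm, into the stable groups. Lemma \ref{lin-ab-k1} handles the linear case: $\KFunc_1(r+1,S)$ is already abelian for $r \geq 1$. Lemma \ref{uni-ab-k1} handles the unitary case: $\KUnit_1(2r,M)$ for $r \geq 2$ injects into the stable group, which is abelian. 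To apply it we need $M$ free with non-degenerate $B$; this is arranged by enlarging $M$ by a hyperbolic summand, which is harmless by stability. In each case the relevant group scheme differs from the point group of the model only by the center and by the determinant or Dickson quotient. These contribute abelian layers, so the derived length is bounded by a small constant.

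For exceptional indices of relative rank at least $2$ the plan is to exhibit, inside $G$, a classical subgroup $H$ containing the Levi-type subgroup $L$ together with enough root subgroups. Using the Bruhat/Gauss decomposition over a semilocal ring, $G(K) = \Elem_G(K)\, L(K)$. So it suffices to show that $[L(K), L(K)]$, or a bounded iterated commutator of it, lies in $\Elem_G(K)$. For most indices ($\mathsf{E}_6$, $\mathsf{E}_7$, $\mathsf{F}_4$, $\mathsf{G}_2$) the anisotropic kernel of $L$ sits in a classical subsystem subgroup of isotropic rank $\geq 1$ or $\geq 2$. The statement then follows from the unitary lemmas:
\begin{itemize}
\item lemma \ref{uni-ab-k1} for subgroups of isotropic rank at least $2$;
\item lemma \ref{ort-ab-k1} for $\TitsIndex{E}{1}{6}{2}{28}$, where the relevant subgroup is an orthogonal group of rank one;
\item lemma \ref{2a-ab-k1} together with lemma \ref{her-dia} for $\TitsIndex{E}{}{7}{2}{31}$, where the anisotropic kernel is quaternionic or quadratic-hermitian.
\end{itemize}
For $\TitsIndex{E}{}{8}{2}{78}$ no such classical model is available, and we cite the result of \cite{k1-red-tri} that $\KFunc_1$ is trivial (or at least abelian) for semilocal algebras over a field.

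The main obstacle I expect is $\TitsIndex{E}{}{7}{2}{31}$. There the anisotropic kernel is of type $\mathsf{D}_4$ with a quaternionic structure, and the reduction to $\Unit(1,M)$ over a quadratic form algebra must be done carefully over semilocal rings with residue fields $\Field_2$. This is exactly why lemma \ref{her-dia} is proved without excluding $\Field_2$. The plan is to produce an orthogonal basis via lemma \ref{her-dia}, to show that every element of $L(K)$ can be reduced, modulo root elements, to one stabilizing that basis as in lemma \ref{2a-ab-k1}, and then to use $\SpecLin(2,K) = \Elem(2,K)$. Combining the bounded number of abelian and central layers from all steps gives the absolute bound on the solvability length.
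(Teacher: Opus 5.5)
Your skeleton matches the paper's: isogeny reduction by lemma \ref{sol-iso}, classical indices via theorem \ref{twi-iso-cla} with lemmas \ref{lin-ab-k1} and \ref{uni-ab-k1}, the Gauss decomposition reducing the exceptional indices to $L(K)$, classical subgroups containing parts of $[L, L]$, lemma \ref{ort-ab-k1} for the indices with superscript $28$, and \cite{k1-red-tri} for $\TitsIndex{E}{}{8}{2}{78}$. The genuine gap is in the case you single out yourself, $\TitsIndex{E}{}{7}{2}{31}$.

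First, the anisotropic kernel there is not just of type $\RootSys{D}{4}$. In fact $[L, L]$ is a central product of $L_1$ of type $\RootSys{D}{4}$ and $L_2$ of type $\RootSys{A}{1}$ (whence $31 = 28 + 3$), and the two factors need different arguments. Your plan never disposes of $L_1$. Lemma \ref{2a-ab-k1} concerns special unitary groups over quadratic form algebras (type $\mathsf{A}$), so it cannot absorb a $\RootSys{D}{4}$ factor. The paper handles $L_1$ with the subgroup $H_1$ generated by the root subgroups of $\RootSys{C}{2} \subseteq \RootSys{BC}{2}$. This subgroup has isotropic rank $2$ (index $\TitsIndex{D}{2}{6}{2}{ (1) }$) and contains $L_1$, so lemma \ref{uni-ab-k1} applies.

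For $L_2$, the mechanism you describe fails as stated. The natural model is the subgroup $H_2$ generated by a $\RootSys{BC}{1}$-subsystem. It contains all of $[L, L]$ and is, up to isogeny, the identity component of $\Unit(M \perp \Hyp(S))$, with $S$ a quaternion form algebra and $M$ of rank $16$ over $K$. Suppose you run the reduction of lemma \ref{2a-ab-k1} there and make $g$ fix an orthogonal basis of $M$. You are then left in the unitary group of $\Hyp(S)$, which has type $\RootSys{A}{1} + \RootSys{A}{1}$. One factor is the split group generated by the long root subgroups, but the other is $\SpecLin_1(S)$, which is $L_2$ itself. So the argument is circular, and $\SpecLin(2, K) = \Elem(2, K)$ does not close it: the isomorphism $\SpecUnit(\Hyp(S')) \cong \SpecLin(2, K)$ used in lemma \ref{2a-ab-k1} holds only over a quadratic form algebra $S'$.

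The missing step is how to reach such an $S'$. The paper applies lemma \ref{her-dia} to the \emph{quaternionic} module $M$ to get an orthogonal basis $e_1, \ldots, e_4$. It then passes to $H_3 = \Unit(e_1 S \perp \Hyp(S))$, which still contains $L_2$ and has index $\TitsIndex{D}{2}{3}{1}{ (2) }$. Since this index coincides with $\TitsIndex{A}{2}{3}{1}{ (1) }$, theorem \ref{twi-iso-cla} rewrites $H_3$ up to isogeny as $\SpecUnit(M' \perp \Hyp(S'))$, where $S'$ is a quadratic form algebra and $M'$ has rank $4$ over $K$. Only at that point does lemma \ref{2a-ab-k1} apply.

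Two smaller points. First, your case list omits two $\mathsf{E}_8$ indices:
\begin{itemize}
\item $\TitsIndex{E}{}{8}{4}{28}$, treated like $\TitsIndex{E}{1}{6}{2}{28}$;
\item $\TitsIndex{E}{}{8}{2}{66}$, treated like $\TitsIndex{E}{2}{6}{2}{16'}$.
\end{itemize}
Also, for $\TitsIndex{E}{1}{6}{2}{16}$ and $\TitsIndex{E}{2}{6}{2}{16''}$ the relevant subgroup is generated by $U_{\pm \alpha}$ for short $\alpha$ together with $[L, L]$. It has isotropic rank one (index $\TitsIndex{A}{1}{5}{1}{ (3) }$), so it is handled by lemma \ref{lin-ab-k1}, not lemma \ref{uni-ab-k1}.

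Second, in the isogeny reduction the crossed pairing gives $d \langle h, h' \rangle = [h, h']$. So the image of $\KFunc_1^G(K)$ \emph{contains} the derived subgroup of $\KFunc_1^{G / C}(K)$. That inclusion, not the one you wrote, is what you need to descend from the simply connected cover to $G$.
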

\begin{proof}
	We freely use lemma
	\ref{sol-iso} to replace
	\( G \) by an isogenic group scheme if necessary. Classical Tits indices are covered by lemmas
	\ref{lin-ab-k1} and
	\ref{uni-ab-k1} using theorem
	\ref{twi-iso-cla}. By the Gauss decomposition
	\( G(K) \) is generated by
	\( \Elem_G(K) \) and
	\( L(K) \), so the claim trivially holds if
	\( L \) is commutative. This holds if
	\( G \) is quasi-split by its isotropic pinnings, i.e.\ for the Tits indices
	\[
		\TitsIndex{D}{s}{4}{2}{2},\,
		\TitsIndex{E}{1}{6}{6}{0},\,
		\TitsIndex{E}{2}{6}{4}{2},\,
		\TitsIndex{E}{}{7}{7}{0},\,
		\TitsIndex{E}{}{8}{8}{0},\,
		\TitsIndex{F}{}{4}{4}{0},\,
		\TitsIndex{G}{}{2}{2}{0}.
	\]
	The case
	\( \TitsIndex{E}{}{8}{2}{78} \) is proven in
	\cite[corollary 6.11(10)]{k1-red-tri}.
	
	In the cases
	\( \TitsIndex{E}{1}{6}{2}{16} \),
	\( \TitsIndex{E}{2}{6}{2}{16''} \) the relative root system has type
	\( \RootSys{G}{2} \) and the absolute root system of
	\( L \) has type
	\( 2 \RootSys{A}{2} \). Let
	\( H \leq G \) be the closed reductive group subscheme generated by
	\( U_{- \alpha} \),
	\( [L, L] \),
	\( U_\alpha \) as an fppf sheaf for a short root
	\( \alpha \). Considering the root diagram of
	\( \RootSys{E}{6} \) from
	\cite{atlas} one can see that
	\( H \) is simple and
	\( U_{ \pm \alpha } \) determine an isotropic pinning with Tits index
	\( \TitsIndex{A}{1}{5}{1}{ (3) } \). We are done by theorem
	\ref{twi-iso-cla} and lemma
	\ref{lin-ab-k1}.
	
	Next consider
	\( \TitsIndex{E}{2}{6}{2}{16'} \) and
	\( \TitsIndex{E}{}{8}{2}{66} \). In both these cases the relative root system has type
	\( \RootSys{BC}{2} \) and the scheme derived subgroup
	\( [L, L] \) is simple. Let
	\( H \leq G \) be the closed reductive group subscheme generated by root subgroups with roots from
	\( \RootSys{C}{2} \subseteq \RootSys{BC}{2} \), it contains
	\( [L, L] \). The root diagrams from
	\cite{atlas} show that the Tits indices of
	\( H \) are
	\( \TitsIndex{D}{2}{5}{2}{ (1) } \) and
	\( \TitsIndex{D}{2}{8}{2}{ (1) } \), so the result follows from theorem
	\ref{twi-iso-cla} and lemma
	\ref{uni-ab-k1}.
	
	For
	\( \TitsIndex{E}{}{7}{4}{9} \) the relative root system has type
	\( \RootSys{F}{4} \) and the absolute root system of
	\( L \) has type
	\( 3 \RootSys{A}{1} \). Let
	\( H \leq G \) be the closed reductive group subscheme generated by root subgroups with roots from
	\( \RootSys{C}{3} \subseteq \RootSys{F}{4} \). Looking and the root diagram
	\cite{atlas} one can see that the corresponding isotropic pinning on
	\( H \) has Tits index
	\( \TitsIndex{D}{1}{6}{3}{ (2) } \) and
	\( [L, L] \leq H \), so
	\( \KFunc_1^G(K) \) is again solvable by theorem
	\ref{twi-iso-cla} and lemma
	\ref{uni-ab-k1}.
	
	Now consider the Tits indices
	\( \TitsIndex{E}{1}{6}{2}{28} \),
	\( \TitsIndex{E}{}{7}{3}{28} \), and
	\( \TitsIndex{E}{}{8}{4}{28} \). In these cases
	\( [L, L] \) is simple with the absolute root system of type
	\( \RootSys{D}{4} \). The relative root system of
	\( G \) has types
	\( \RootSys{A}{2} \),
	\( \RootSys{C}{3} \), and
	\( \RootSys{F}{4} \) respectively. Let
	\( H \leq G \) be the closed reductive group subscheme generated by
	\( U_{- \alpha} \),
	\( [L, L] \),
	\( U_\alpha \) as an fppf sheaf. By
	\cite{atlas} this group scheme is simple with isotropic pinning of Tits index
	\( \TitsIndex{D}{2}{5}{1}{ (1) } \) and now we apply theorem
	\ref{twi-iso-cla} and lemma
	\ref{ort-ab-k1}.
	
	In the remaining case
	\( \TitsIndex{E}{}{7}{2}{31} \) the relative root system has type
	\( \RootSys{BC}{2} \) and the absolute root system of
	\( L \) has type
	\( \RootSys{A}{1} + \RootSys{D}{4} \). Firstly let
	\( H_1 \leq G \) be the closed reductive group subscheme generated by root subgroups with roots from the relative root subsystem of type
	\( \RootSys{C}{2} \). By
	\cite{atlas} it has isotropic pinning with the Tits index
	\( \TitsIndex{D}{2}{6}{2}{ (1) } \), so theorem
	\ref{twi-iso-cla} and lemma
	\ref{uni-ab-k1} imply that
	\( (L \cap H_1)(K) \) is solvable modulo its intersection with
	\( \Elem_G(K) \). The group scheme
	\( [L, L] \) is a central product of two simple group subschemes
	\( L_1 \) and
	\( L_2 \), the first one with the absolute root system
	\( \RootSys{D}{4} \) and the second one with the absolute root system
	\( \RootSys{A}{1} \), and
	\( L \cap H_1 \) contains only
	\( L_1 \).
	
	To deal with the second factor consider another closed reductive group subscheme
	\( H_2 \leq G \) generated by root subgroups with roots from a root subsystem of type
	\( \RootSys{BC}{1} \). By
	\cite{atlas} it has isotropic pinning with the Tits index
	\( \TitsIndex{D}{2}{6}{1}{ (2) } \) and it contains the whole group subscheme
	\( [L, L] \). It remains to prove that
	\( H_2(K) / ( \Elem_{H_2}(K)\, L_1 ) \) is solvable. Again by theorem
	\ref{twi-iso-cla} the group scheme
	\( H_2 \) up to isogeny is the connected component of the unitary group scheme of a quadratic module
	\( M \perp \Hyp(S) \) over a quaternion form
	\( K \)-algebra
	\( S \), where the hermitian form is non-degenerate and
	\( M \) has constant rank
	\( 16 \) over
	\( K \). By lemma
	\ref{her-dia} this module
	\( M \) has an orthogonal basis
	\( (e_1, e_2, e_3, e_4) \). Let
	\( H_3 \leq H_2 \) be the closed reductive group subscheme corresponding to the connected component of the unitary group scheme of the quadratic module
	\( e_1 S \perp \Hyp(S) \), it still contains the factor
	\( L_2 \), but its Tits index is
	\( \TitsIndex{D}{2}{3}{1}{ (2) } \).
	
	This Tits index is equivalent to
	\( \TitsIndex{A}{2}{3}{1}{ (1) } \). In other words, up to isogeny
	\( H_3 \) is also the special unitary group scheme of a quadratic module
	\( M' \perp \Hyp(S') \) over a quadratic form
	\( K \)-algebra
	\( S' \) by the final application of theorem
	\ref{twi-iso-cla}, where the hermitian form is again non-degenerate and
	\( M' \) has constant rank
	\( 4 \) over
	\( K \). The result follows from lemma
	\ref{2a-ab-k1}.
\end{proof}

\section{%
	Solvability of anisotropic%
	\texorpdfstring{
		\( \KFunc_1 \)
	}{K1}%
}

In this section the reader can assume that
\( G \) is a simple group scheme of local isotopic rank at least
\( 2 \) over semilocal ring
\( K \). In order to prove solvability of
\( \KFunc_1^G(K) \) let
\( G^{ [n] }(K) \subseteq G(K) \) be the set of all
\( G(K) \) such that the image of
\( G \) in
\( G(E) \) lies in
\( \Elem_G(E) \) for every commutative unital
\( K \)-algebra
\( E \) such that
\( E \) is semilocal with at most
\( n \) maximal ideals. Here
\( n \geq 0 \) is an integer. As in the case of dimension filtration
\( G^{ [n] }(K) \leqt G(K) \) is a normal subgroup preserved under all scheme automorphisms of
\( G \) as a group scheme. We obtain the
\textit{cardinality filtration}
\[
	G(K)
	=
	G^{ [0] }(K)
	\geq
	G^{ [1] }(K)
	\geq
	\ldots
	\geq
	\Elem_G(K).
\]
This filtration always stabilizes at
\( G^{ [N] }(K) = \Elem_G(K) \), where
\( N = | { \Max(K) } | \). The group
\( G(K) / G^{ [1] }(K) \) embeds into a product of set-many
\( \KFunc_1^G(E) \) for local
\( K \)-algebras
\( E \), its solvability is already considered in theorem
\ref{sol-sem-iso}.

\begin{theorem}
	\label{sol-card}
	Let
	\( K \) be a semilocal commutative unital ring and
	\( G \) a simple group scheme over
	\( K \) of local isotropic rank at least
	\( 2 \). Then the cardinality filtration satisfies
	\[
		\bigl[ G^{ [1] }(K), G^{ [n] }(K) \bigr]
		\leq
		G^{ [n + 1] }(K).
	\]
\end{theorem}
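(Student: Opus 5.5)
We follow the proof of theorem \ref{sol-dim}, with the localization--completion lemma \ref{loc-com} as the main tool. Both groups \( G^{[1]}(K) \) and \( G^{[n]}(K) \) are defined by conditions on all semilocal \( K \)-algebras with a bounded number of maximal ideals, and this class is closed under base change along \( K \to E \). So it suffices to prove the following: for every semilocal \( K \)-algebra \( E \) with at most \( n + 1 \) maximal ideals and all \( g \in G^{[1]}(E) \), \( h \in G^{[n]}(E) \), we have \( [g, h] \in \Elem_G(E) \). Replacing \( K \) by \( E \), we may assume that \( K \) itself has exactly \( n + 1 \) maximal ideals \( \mathfrak{m}_0, \ldots, \mathfrak{m}_n \). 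If there are at most \( n \), then \( h \in \Elem_G(K) \) already. We may also assume \( n \geq 1 \), since \( G^{[0]}(K) = G(K) \) and for \( n = 0 \) we would only need that \( [G^{[1]}, G] \leq G^{[1]} \), which holds by normality.

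Choose \( s \in K \) with \( s \in \mathfrak{m}_1 \cap \ldots \cap \mathfrak{m}_n \) and \( s \notin \mathfrak{m}_0 \); this is possible by the Chinese remainder theorem. Then \( K_s \) is semilocal with the single maximal ideal \( \mathfrak{m}_0 K_s \), so it is local. Hence the image of \( g \) in \( G(K_s) \) lies in \( \Elem_G(K_s) \), because \( g \in G^{[1]}(K) \). Next we examine the finite completion \( \widetilde{K}_{(s)} \). By lemma \ref{com-dim} (more precisely, by the argument in its proof: \( s \) lies in the Jacobson radical of the completion and the quotient modulo \( s \) is \( K / sK \)), the maximal ideals of \( \widetilde{K}_{(s)} \) correspond bijectively to those of \( K / sK \). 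These are exactly \( \mathfrak{m}_1, \ldots, \mathfrak{m}_n \). So \( \widetilde{K}_{(s)} \) is semilocal with at most \( n \) maximal ideals.

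This counting step is the one to check carefully. It needs that \( 1 + s x \) is invertible in \( \widetilde{K}_{(s)} \), which is shown in the proof of lemma \ref{com-dim}, and that a maximal ideal contains \( s \) if and only if it contains the Jacobson radical part \( s\widetilde{K}_{(s)} \). Given this, \( h \in G^{[n]}(K) \) maps into \( \Elem_G(\widetilde{K}_{(s)}) \), and lemma \ref{loc-com} gives \( [g, h] \in \Elem_G(K) \).

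The main obstacle I expect is the reduction in the first paragraph: that it suffices to test membership in \( G^{[n+1]} \) over the algebras \( E \) themselves, and that \( G^{[1]} \) and \( G^{[n]} \) are stable under base change. This follows directly from the definitions, since a semilocal \( E \)-algebra with at most \( k \) maximal ideals is also such a \( K \)-algebra. A second point is that lemma \ref{loc-com} is stated for arbitrary \( K \) and an arbitrary element \( s \), so nothing beyond semilocality is needed. Notably, simplicity of \( G \) is not used, apart from its role in guaranteeing the definitions.
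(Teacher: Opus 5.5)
\textbf{Gap: \( K_s \) need not be local.} Your overall architecture is the paper's: choose \( s \notin \mathfrak{m}_0 \) with \( g \) elementary over \( K_s \), count the maximal ideals of \( \widetilde{K}_{(s)} \) via lemma \ref{com-dim}, and finish with lemma \ref{loc-com}. The failing step is the claim that \( K_s \) ``is semilocal with the single maximal ideal \( \mathfrak{m}_0 K_s \), so it is local''.

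The maximal ideals of \( K_s \) are the primes of \( K \) that are maximal among those not containing \( s \), and these need not be maximal in \( K \). For example, let \( K \) be the semilocalization of \( k[x, y] \) at \( \mathfrak{m}_0 = (x, y) \) and \( \mathfrak{m}_1 = (x - 1, y) \), and take \( s = x - 1 \in \mathfrak{m}_1 \setminus \mathfrak{m}_0 \). For \( c \in k^* \), the prime \( (x - 1 - c y) K \) has these properties:
\begin{itemize}
\item it lies in \( \mathfrak{m}_1 \) but not in \( \mathfrak{m}_0 \);
\item it does not contain \( s \);
\item its only proper overprime in \( K \) is \( \mathfrak{m}_1 \ni s \).
\end{itemize}
So it gives a maximal ideal of \( K_s \) different from \( \mathfrak{m}_0 K_s \). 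For infinite \( k \) there are infinitely many such maximal ideals. Hence \( g \in G^{[1]}(K) \) does not, by definition, map into \( \Elem_G(K_s) \), and nothing forces this for your particular CRT-chosen \( s \).

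\textbf{The fix (which is the paper's route).} The image of \( g \) lies in \( \Elem_G(K_{\mathfrak{m}_0}) \), and \( K_{\mathfrak{m}_0} \) is the filtered colimit of the rings \( K_t \) for \( t \notin \mathfrak{m}_0 \). Two facts make elementarity descend:
\begin{itemize}
\item \( \Elem_G \) is generated by the image of a morphism from an affine space;
\item \( G \) is of finite presentation.
\end{itemize}
Therefore the finitely many data witnessing elementarity of \( g \) descend to some \( K_t \) with \( t \notin \mathfrak{m}_0 \).

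You can then use \( t \) directly: \( \ClosedSet(tK) \) is a proper closed subset of \( \Max(K) \), so \( K / tK \), and hence \( \widetilde{K}_{(t)} \), has at most \( n \) maximal ideals. The CRT choice becomes unnecessary. Alternatively, replace your \( s \) by \( st \), which still lies in \( \mathfrak{m}_1 \cap \ldots \cap \mathfrak{m}_n \setminus \mathfrak{m}_0 \).

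With this correction the rest of your argument goes through. This includes the base-change reduction, the count of maximal ideals via \( s \widetilde{K}_{(s)} \subseteq \Jac(\widetilde{K}_{(s)}) \) and \( \widetilde{K}_{(s)} / s \widetilde{K}_{(s)} = K / sK \), and the application of lemma \ref{loc-com}.
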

\begin{proof}
	Without loss of generality,
	\( K \) has precisely
	\( n + 1 \) maximal ideals. Take
	\( g \in G^{ [1] }(K) \),
	\( h \in G^{ [n] }(K) \), and
	\( \mathfrak{m} \in \Max(K) \). By definition, the image of
	\( g \) in
	\( G( K_{ \mathfrak{m} } ) \) lies in
	\( \Elem_G( K_{ \mathfrak{m} } ) \). It follows that there is
	\( s \in K \setminus \mathfrak{m} \) such that the image of
	\( g \) in
	\( G(K_s) \) lies in the elementary subgroup. The closed subset
	\( \ClosedSet( s K ) \subseteq \Max(K) \) is proper, so the semilocal rings
	\( K / s K \) and
	\( \widetilde{K}_{ (s) } \) have at most
	\( n \) maximal ideals by lemma
	\ref{com-dim}. Then the image of
	\( h \) in
	\( G( \widetilde{K}_{ (s) } ) \) lies in the elementary subgroup and
	\( [g, h] \in \Elem_G(K) \) by lemma
	\ref{loc-com}.
\end{proof}

The above result proves that
\( G^{ [1] }(K) / \Elem_G(K) \) is solvable, but the solvability length depends on the number of maximal ideals. For classical simple group schemes we can actually prove a uniform bound on the solvability length.

A non-unital associative ring object
\( A \) in an regular category
\( \mathbf{C} \) is called
\textit{radical} if for every
\( x \colon A \) exists (necessarily unique)
\( y \colon A \) such that
\( x + y + x y = x + y + y x = 0 \). More formally, the formula
\[
	\exists y \colon A \enskip
		x + y + x y = x + y + y x = 0
\] in regular logic holds for
\( x \colon A \). It is easy to see that a two-sided ideal
\( A \leqt R \) of a unital associative ring object is radical if and only if
\( 1 + A \subseteq R^* \). Indeed, if
\( 1 + A \subseteq R^* \) and
\( x \colon A \), then
\[
	y = (1 + x)^{- 1} - 1 = - x (1 + x)^{- 1} \colon A.
\]
It follows by taking
\( R = A \rtimes \Int \) that two-sided ideals of radical ring objects are also radical.

The following lemma is not used later. We include it because of simplicity and as a motivation for the whole idea of decomposing
\( G(A) \). Below we prove lemma
\ref{spl-uni} actually used in the proof of the main theorem. Its claim is a bit weaker for linear groups, but covers all classical groups simultaneously.

\begin{lemma}
	\label{spl-lin}
	Let
	\( K \) be arbitrary commutative unital ring,
	\( R \) an Azumaya algebra, and
	\( G \) the group scheme of invertible elements of
	\( R \). Let also
	\( A \) be a radical commutative
	\( K \)-algebra in a regular category and assume that
	\( A = \sum_{i = 1}^n A_i \) is a sum of ideals. Then
	\( G(A) = \prod_{i = 1}^n G(A_i) \).
\end{lemma}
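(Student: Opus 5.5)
The plan is to identify \( G(A) \) with the group of quasi-invertible elements of the non-unital ring object \( R \otimes_K A \), and then peel off one summand \( A_i \) at a time by a direct factorization. Throughout I argue in the internal regular logic of the category. All identities involved are equational, and the hypotheses (radicality, \( A = \sum_i A_i \)) are regular formulas, so set-theoretic manipulations transfer verbatim.

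\emph{Step 1: \( G(A) = 1 + R \otimes_K A \).} By definition \( G(A) \) is the kernel of \( (R \otimes_K (A \rtimes K))^* \to R^* \), i.e.\ the set of invertible elements of the form \( 1 + x \) with \( x \colon R \otimes_K A \). I have to show that every such \( 1 + x \) is invertible, i.e.\ that \( R \otimes_K A \) is radical; here \( R \otimes_K A \) makes sense as a finite limit because \( R \) is finitely generated projective. Choose a decomposition \( R \oplus P \cong K^N \) and embed \( R \) into \( E = \mathrm{End}_K(R) \) by left multiplications. Then \( E \otimes_K A \) is a corner \( e \Mat(N, A) e \) of the matrix ring over \( A \). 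The ring \( \Mat(N, A) \) is radical because \( A \) is: this is the standard Gaussian elimination argument for matrices over a radical ring, which is expressible by explicit formulas in the entries and the quasi-inverses of the entries. Corners of radical rings are radical, so \( 1 + x \) is invertible in \( E \otimes_K (A \rtimes K) \). Its inverse commutes with the right multiplications by \( R \), because \( 1 + x \) does. Since the centralizer of the right \( R \)-action in \( \mathrm{End}_K(R) \) is \( R \) (this holds after any base change, since \( R \) is projective), the inverse lies in \( R \otimes_K (A \rtimes K) \). This step is the main obstacle, since it is where the Azumaya/projectivity hypothesis on \( R \) is really used. If the corner argument is awkward internally, I would instead prove radicality fppf locally, where \( R \cong \Mat(m, K) \), using that the formula \( \exists y\, (x + y + x y = 0 = x + y + y x) \) is a regular formula whose validity descends.

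\emph{Step 2: factorization.} Because \( A = \sum_i A_i \), every \( x \colon R \otimes_K A \) can be written as \( x = x_1 + \ldots + x_n \) with \( x_i \colon R \otimes_K A_i \). This holds internally, since \( \prod_i A_i \to A \) is a regular epimorphism and so is its tensor product with \( R \), as \( R \) is a direct summand of \( K^N \). I then induct on \( n \), writing
\[
	1 + x
	=
	(1 + x_1)\,
	\bigl(
		1 + (1 + x_1)^{- 1} (x_2 + \ldots + x_n)
	\bigr).
\]
Here \( 1 + x_1 \in G(A_1) \) by Step 1 applied to \( A_1 \), which is radical as an ideal of a radical ring. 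Moreover \( (1 + x_1)^{- 1} x_j \colon R \otimes_K A_j \), because \( R \otimes_K A_j \) is a two-sided ideal in \( R \otimes_K (A \rtimes K) \). So the second factor lies in \( G(A_2 + \ldots + A_n) \), and by induction it is a product of elements of \( G(A_2), \ldots, G(A_n) \). This gives \( G(A) \subseteq \prod_{i = 1}^n G(A_i) \). The reverse inclusion is trivial, since each \( G(A_i) \leq G(A) \). Commutativity of \( A \) is used only to form \( R \otimes_K A \) as an algebra.
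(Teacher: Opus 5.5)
Your proof is correct, and its core is the paper's argument. The paper also reduces to \( n = 2 \), noting that all partial sums \( \sum_{i \in I} A_i \) are radical. It then factors \( g = 1 + a_1 + a_2 = (1 + a_1)(1 + a_2 + b_1 a_2) \) with \( 1 + b_1 = (1 + a_1)^{-1} \), which is exactly your \( (1 + x_1)\bigl(1 + (1 + x_1)^{-1} x_2\bigr) \).

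The one real difference is how you prove that \( R \otimes_K A \) is radical. The paper does this in one line with the reduced norm \( \mathrm{n} \colon R \to K \): an element \( x \colon R \otimes_K (A \rtimes K) \) is invertible if and only if \( \mathrm{n}(x) \) is, and \( \mathrm{n}(1 + a) \in 1 + A \). Your route goes through the corner \( e \Mat(N, A) e \cong \mathrm{End}_K(R) \otimes_K A \) and the centralizer of right multiplications. It is longer, but it uses only that \( R \) is finitely generated projective over \( K \). So, contrary to your remark, this is not where the Azumaya hypothesis enters: your argument proves the lemma for any module-finite projective \( K \)-algebra \( R \).

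In effect your Step 1 is the paper's argument with the ordinary norm of the left regular representation in place of the reduced norm, and you could say it that way directly. Extend \( \lambda_{1+x} \) by the identity on a complement \( P \); its determinant lies in \( 1 + A \), so \( \lambda_{1 + x} \) is bijective. A right inverse of \( 1 + x \) is then automatically two-sided, because \( \lambda \) is an injective ring homomorphism.

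Drop the fppf-descent fallback. It is not needed, and base change along a flat extension that is not module-finite is not a finite-limit construction in an arbitrary regular category, so it is not clear that it even makes sense there.
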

\begin{proof}
	Note that all sums
	\( \sum_{i \in I} A_i \) are radical, where
	\( I \subseteq \{ 1, \ldots, n \} \). Thus it suffices to prove the lemma only for
	\( n = 2 \). The ring objects
	\( R \otimes_K A \) and
	\( R \otimes_K A_i \) are also radical because
	\( x \colon R \otimes_K (A \rtimes K) \) is invertible if and only if
	\( \mathrm{n}(x) \colon A \rtimes K \) is invertible, where
	\( \mathrm{n} \colon R \to K \) is the reduced norm.
	
	Now take
	\[
		g
		\colon
		G(A)
		=
		1 + R \otimes_K A
		\subseteq
		R \otimes_K (A \rtimes K),
	\]
	so
	\( g = 1 + a_1 + a_2 \) for
	\( a_i \colon R \otimes_K A_i \). Let also
	\( b_1 \) be the quasi-inverse to
	\( a_1 \), i.e.\ %
	\(
		1 + b_1
		=
		(1 + a_1)^{- 1}
		\colon
		1 + R \otimes_K A_1
	\). Then
	\[
		g = (1 + a_1) (1 + a_2 + b_1 a_2)
	\]
	and
	\( a_2 + b_1 a_2 \colon R \otimes_K A_2 \).
\end{proof}

Other classical groups schemes require a bit more work.

\begin{lemma}
	\label{big-cell}
	Let
	\( G \) be a reductive group scheme over arbitrary commutative unital ring
	\( K \) with an isotropic pinning. Let also
	\( A \) be a radical commutative
	\( K \)-algebra in a regular category. Then
	\( G(A) = U^{-}(A)\, L(A)\, U^{+}(A) \), where
	\(
		U^{\pm}
		=
		\prod_{
			\alpha \in \Phi^{\pm} \setminus 2 \Phi^{\pm}
		}
			U_\alpha
	\) are the unipotent radicals of opposite standard parabolic subgroups.
\end{lemma}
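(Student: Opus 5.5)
The plan is to reduce to the classical big cell statement for the open subscheme \( \Omega = U^{-} \times L \times U^{+} \to G \). By \cite[Exp. XXVI, \S 4]{red-grp-sch}, applied fppf locally to the split situation given by the isotropic pinning, the multiplication morphism \( \mu \colon U^{-} \times L \times U^{+} \to G \) is an open immersion. Its image \( \Omega \) is the principal open subscheme defined by a single function \( f \in \Gamma(G, \mathcal{O}_G) \) that equals \( 1 \) at the identity section. To get this \( f \), I would first descend the open immersion, since being an open immersion is fppf local and the subgroups \( U^{\pm} \), \( L \) are globally defined. Then I would argue that \( \Omega \) contains the unit section and is stable under the relevant conjugations. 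A concrete choice of \( f \) is a suitable power or product of minors of \( \mathrm{Ad} \) acting on \( \liealg \), namely the determinant of the projection of \( \mathrm{Ad}(g)(\liealg_0 \oplus \bigoplus_{\alpha \in \Phi^{+}} \liealg_\alpha) \) to the complementary summand, in a local trivialization. If that proves awkward, I would only use the weaker fact that \( \Omega \) is an open subscheme containing the identity and cover it Zariski locally by principal opens \( D(f_j) \) with \( f_j(1) = 1 \).

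Next I would interpret points over a radical algebra. Recall that \( G(A) = \Ker( G(A \rtimes K) \to G(K) ) \). Take a generalized point \( g \colon G(A) \), which is a morphism of \( K \)-algebras \( \Gamma(G, \mathcal{O}_G) \to A \rtimes K \) lying over the identity. Then \( g(f) \in 1 + A \), and this is invertible because \( A \) is radical, that is, \( 1 + A \subseteq (A \rtimes K)^{*} \). This is a statement in regular logic, so it holds in any regular category. Therefore \( g \) factors through \( \Omega = \Spec(\Gamma(G, \mathcal{O}_G)_f) \), which gives \( g \colon \Omega(A \rtimes K) \) lying over the identity point of \( \Omega(K) \).

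Using the isomorphism \( \mu \), I then obtain components \( (u^{-}, l, u^{+}) \colon U^{-}(A \rtimes K) \times L(A \rtimes K) \times U^{+}(A \rtimes K) \). These components map to \( (1, 1, 1) \) over \( K \), because \( \mu^{-1} \) sends the identity to the triple of identities. Hence each component lies in the corresponding \( A \)-points, and \( g = u^{-} l u^{+} \). This shows \( G(A) \subseteq U^{-}(A)\, L(A)\, U^{+}(A) \); the reverse inclusion is trivial. Everything is expressed by finite limits and the existence of inverses, so it transfers to an arbitrary regular category as in \cite[\S 4]{ele-loc}.

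I expect the main obstacle to be the first step. We need that \( \Omega \) is principal, or at least covered by principal opens \( D(f_j) \) with \( f_j \equiv 1 \) along the identity section, globally on \( \Spec(K) \) rather than only fppf locally. If a single global \( f \) is hard to produce, there is a fallback. Note that \( g(f_j) \in 1 + A \) is invertible for every \( j \) with \( f_j(1) = 1 \), so it suffices to find one such \( f_j \) locally on \( \Spec(K) \) and glue the resulting decomposition. The gluing works because the decomposition is unique, since \( \mu \) is a monomorphism.
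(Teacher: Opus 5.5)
Your skeleton matches the paper's proof. You find a function on \( G \) that is \( \equiv 1 \) along the unit section and whose principal open lies in \( \Omega \). Its value at any \( g \colon G(A) \) lies in \( 1 + A \subseteq (A \rtimes K)^{*} \), and you read off the three components via \( \mu^{-1} \). That last part is fine. The gap is the step you flag yourself: you never actually produce the global function.

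\textbf{The adjoint determinant.} This choice, suitably interpreted as \( f(g) = \det\bigl(\pi \circ \mathrm{Ad}(g)|_{\mathfrak{p}^{+}}\bigr) \) with \( \pi \) the projection onto \( \mathfrak{p}^{+} = \liealg_0 \oplus \bigoplus_{\alpha \in \Phi^{+}} \liealg_\alpha \) along \( \mathfrak{u}^{-} \), does satisfy \( D(f) = \Omega \). But \( D(f) \subseteq \Omega \) needs an argument you do not give. On geometric fibres you must show that transversality of \( \mathrm{Lie}(g P^{+} g^{-1}) \) and \( \mathrm{Lie}(U^{-}) \) forces \( g P^{+} g^{-1} \) to be opposite to \( P^{-} \), hence \( g \in U^{-} P^{+} \).

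\textbf{The fallback.} It fails as stated. \( A \) lives in an arbitrary regular category, where you cannot localize \( A \) at elements of \( K \), since that needs colimits. So there is nothing to ``glue'', and uniqueness of the decomposition gives no existence.

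\textbf{The missing idea.} An elementary observation makes both detours unnecessary. Write \( G = \Spec(R) \), let \( \mathfrak{a} \) be the augmentation ideal of the unit section, and let \( \mathfrak{b} \leqt R \) be an ideal with \( V(\mathfrak{b}) = G \setminus \Omega \). Since \( \Omega \) contains the unit section, \( V(\mathfrak{a}) \cap V(\mathfrak{b}) = \varnothing \), so \( \mathfrak{a} + \mathfrak{b} = R \). Writing \( 1 = a + b \) gives \( s = b \in (1 + \mathfrak{a}) \cap \mathfrak{b} \). This is a single global function with \( s(1) = 1 \) and \( D(s) \subseteq \Omega \).

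The same idea rescues your local functions without any gluing. If \( f_j \in R \) satisfy \( D(f_j) \subseteq \Omega \) and \( \sum_j c_j f_j(1) = 1 \), then \( f = \sum_j c_j f_j \) has \( f(1) = 1 \) and \( D(f) \subseteq \bigcup_j D(f_j) \subseteq \Omega \).

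This is exactly the paper's argument. It is stated for any open embedding \( Y \to X \) of pointed affine schemes of finite presentation whose image contains the section. After choosing \( X \subseteq \Affine^n_K \) with the section at \( 0 \), \( s \) becomes a polynomial with constant term \( 1 \), so \( s(\vec{a}) \in 1 + A \) is invertible for every \( \vec{a} \colon X(A) \). It needs no structure theory of \( G \) beyond \( \Omega \) being open and containing the identity. With this \( s \) in place, the rest of your proof goes through unchanged.
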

\begin{proof}
	Recall that the product morphism
	\( \Omega = U^{-} \times L \times U^{+} \to G \) is an open embedding and its image contains the unit section. We prove that actually
	\( X(A) = Y(A) \) for every open embedding
	\( Y \to X \) of pointed affine
	\( K \)-schemes of finite presentation. Let
	\( X = \Spec(R) \),
	\( \mathfrak{a} = \Ker(R \to K) \) be its augmentation ideal corresponding to the distinguished point, and
	\( \mathfrak{b} \leqt K \) an ideal corresponding to the closed subset
	\( X \setminus Y \subseteq X \). Clearly,
	\( \mathfrak{a} \) and
	\( \mathfrak{b} \) are coprime, so there exists
	\( s \in ( 1 + \mathfrak{a} ) \cap \mathfrak{b} \). The corresponding principal open subset is contained in
	\( Y \) and contains the distinguished section
	\( \Spec(K) \subseteq X \).
	
	Now fix a closed embedding
	\( X \subseteq \Affine_K^n \) such that the distinguished
	\( K \)-point is the zero section. The element
	\( s \) becomes a polynomial with unit free coefficient. If
	\( \vec{a} \colon X(A) \subseteq A^n \), then
	\(
		s( \vec{a} )
		\colon
		1 + A
		\subseteq
		( A \rtimes K )^*
	\), so
	\( \vec{a} \colon Y(A) \).
\end{proof}

\begin{lemma}
	\label{spl-uni}
	Let
	\( K \) be a semilocal commutative unital ring and
	\( G \) a unitary group scheme constructed either by a semi-regular traditional quadratic form or by a classical (unital) odd form algebra as in lemma
	\ref{twi-cla}. Let also
	\( A \) be a radical power idempotent commutative
	\( K \)-algebra in an infinitary positive category and
	\( A = \sum_{i = 1}^n A_i \) is its decomposition into a sum of power idempotent ideals. Then
	\( [ G(A), G(A) ] \leq \prod_{i = 1}^n G(A_i) \).
\end{lemma}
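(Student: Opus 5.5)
Since $K$ is semilocal, $G$ has a global isotropic pinning (possibly of rank $1$, or with $L$ large). I will use it to reduce $G(A)$ to the Levi subgroup and the unipotent radicals. By lemma \ref{big-cell}, since $A$ is radical,
\[
	G(A) = U^{-}(A)\, L(A)\, U^{+}(A).
\]
For the classical groups from lemma \ref{twi-cla}, a global isotropic pinning can be chosen of the form given by an orthogonal hyperbolic family in the odd form algebra $(R, \Delta)$, or by a splitting $M \perp \Hyp(K)$ in the odd orthogonal case. Such a pinning exists over semilocal $K$ because it exists over each residue field and lifts modulo the Jacobson radical. With this choice $L$ is, up to a torus factor, again a unitary or linear group of $e_0 R e_0$ together with $\GenLin$ of corners. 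The unipotent radicals $U^{\pm}$ are built from $2$-step nilpotent modules. For these, $U^{\pm}(A) = \prod_i U^{\pm}(A_i)$ directly: a point of $P_\alpha(A)$ is a sum of points of $P_\alpha(A_i)$, and the correction in the central part of a $2$-step nilpotent module lies in $A_i A_j \subseteq A_i$. The problem is therefore reduced to $L(A)$.

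For the factor $\GenLin(e_i R e_i \otimes A)$ of $L(A)$, I will argue as in lemma \ref{spl-lin}: $g = (1 + a_1)(1 + a_2 + b_1 a_2)$. This gives a product decomposition into $\prod_i L(A_i)$ without any commutator, and here only radicality is needed. The genuinely unitary part $\Unit(e_0 R e_0, \Delta_0)(A)$ does not split as a product directly, and this is why the statement only claims it for commutators. My plan is to handle the commutator $[G(A), G(A)]$ modulo $\prod_i G(A_i)$ as follows.

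First, $H = \prod_i G(A_i)$ is normal in $G(A \rtimes K)$, since each $G(A_i)$ is a kernel. So it suffices to show that $G(A)/H$ is abelian. Using power idempotence, each $A_i$ is generated as an ideal by products $x y$. Since every element $u \in U^{\pm}(A)$ lies in $H$ and $L(A_i) \leq H$, the images of $U^{\pm}(A)$ and of the linear parts of $L(A)$ in $G(A)/H$ are trivial. The group $G(A)/H$ is then a quotient of the anisotropic unitary part $\Unit_0(A)$ of $L(A)$, and the remaining goal is to show that $[\Unit_0(A), \Unit_0(A)] \leq H$.

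To prove this I will enlarge the module. I will embed $\Unit_0$ into a bigger unitary group in which it becomes conjugate into something elementary, as in the hyperbolicity trick $M \oplus (-M) \cong \Hyp(N)$ from lemma \ref{uni-ab-k1}. For $g_1, g_2 \in \Unit_0(A)$ I will use $[g_1, g_2] \equiv [g_1, \up{h}{g_2}]$ with $h \in G(K)$ chosen so that $\up{h}{g_2}$ commutes with $g_1$. To make this argument happen inside $G$ itself rather than in a stabilized group, I will first take the corner with the hyperbolic part of the pinning. For this I need the isotropic rank to be large enough, and the paper's standing assumption of local isotropic rank at least $2$ may require this step to be reworked. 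Independently of that, I will write $g_2 = \prod_i g_{2,i}$ modulo unipotent factors by applying lemma \ref{big-cell} to the big cell of the enlarged group. In that group $\Unit_0$ sits inside a Levi subgroup of type $\GenLin$ via the hyperbolic identification, where the linear splitting above applies.

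The main obstacle is this last step: producing the product decomposition of $\Unit_0(A)$ modulo commutators using only radicality and power idempotence, uniformly for all classical types. The odd orthogonal case with non-invertible $2$ will be the hardest, since the semi-regular form is degenerate and the trick $M \oplus (-M)$ must be replaced by an explicit hyperbolic decomposition of $M \perp (-M)$ together with an extra rank-one piece.
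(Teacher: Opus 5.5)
There is a genuine gap. The parts you actually prove (normality of $\prod_i G(A_i)$, the splitting of $U^{\pm}(A)$ and of the $\GenLin$-factors of $L(A)$) are correct but do not touch the difficulty, and the step you leave open is the whole lemma.

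\textbf{The reduction to $\Unit_0$ gains nothing.} A nontrivial global pinning need not exist. The residue fields of a semilocal $K$ may carry incompatible Tits indices, which is the locally but not globally isotropic case this section is about. Then in general only the anisotropic pinning is global, and $\Unit_0 = G$. Even when a pinning exists, $\Unit_0$ is again a unitary group of the same kind, so $[\Unit_0(A), \Unit_0(A)] \leq \prod_i G(A_i)$ is just the original statement.

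\textbf{The open step does not work as planned.} Inside $G$ there is no room for an $h \in G(K)$ that moves $M$ orthogonally to itself. More importantly, you never explain why $[g_1, h]$ should lie in $\prod_i G(A_i)$, and your congruence $[g_1, g_2] \equiv [g_1, \up{h}{g_2}]$ depends on exactly that. Your substitute does not help either. Even if $\Unit_0$ sits in a $\GenLin$-type Levi subgroup of an enlarged group, lemma~\ref{spl-lin} only produces factors in $\GenLin(N \otimes_K A_i)$. Nothing forces these factors into $\Unit(M \otimes_K A_i)$, and the commutator plays no role in that step.

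\textbf{The missing mechanism.} You have to leave $G$. Take $\ell$ so large that, as in lemma~\ref{uni-ab-k1}, there is an elementary $h \in \ElemUnit(2 \ell, M)$ over $K$ with $h M \leq \Hyp(S^\ell)$. Then $g_1$ commutes with $\up{h}{g_2}$, so
\[
	[g_1, g_2] = [g_1, h^{- 1}]\, \up{g_2 h^{- 1}}{[g_1, h]}.
\]
Because $h$ is elementary, theorems~\ref{ele-sub-nor} and~\ref{unrel} (using power idempotence of $A$) put this element into the unrelativized group $\ElemUnit(2 \ell, M \otimes_K A)$. It also lies in $\Unit(M \otimes_K A)$. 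Hence it lies in the intersection of $\ElemUnit(2 \ell, M \otimes_K A)$ with the diagonal subgroup, of which $\Unit(M \otimes_K A)$ is a direct factor.

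The splitting then comes from the decomposition
\[
	\ElemUnit(2 \ell, M \otimes_K A) = \ElemUnit(2 \ell, M \otimes_K A_1)\, \ElemUnit(2 \ell, M \otimes_K A_2)
\]
for $A = A_1 + A_2$, which uses power idempotence of the $A_i$. Combine it with uniqueness in the big cell of lemma~\ref{big-cell}. Write a diagonal elementary $g$ as $y_1 y_2$, with $y_1 = y_{1, -}\, y_{1, \mathrm{d}}\, y_{1, +}$ and $y_2 = y_{2, +}\, y_{2, \mathrm{d}}\, y_{2, -}$. Comparing the factors gives $g = y_{1, \mathrm{d}}\, y_{2, \mathrm{d}}$. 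Projecting to the $\Unit(M)$-factor then yields the claim.

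Neither this splitting of the elementary subgroup nor the reason for choosing $h$ elementary appears in your plan.
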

\begin{proof}
	Let
	\( S \) be the form algebra and
	\( (M, B, q) \) the quadratic module over
	\( S \) from the construction of
	\( G \), i.e.\ either
	\( S = K \) with
	\( \lambda = 1 \),
	\( M \) is finitely generated projective module of constant odd rank, and
	\( q \) is a semi-regular traditional quadratic form, or
	\( S = M \) is an odd form ring with standard hermitian form and
	\( q \) corresponds to its odd form parameter. Choose large
	\( \ell > 0 \) such that
	\( M \) is a direct summand of
	\( \Hyp(S^\ell) \) as in the proof of lemma
	\ref{uni-ab-k1}. Further choose
	\( h \in \ElemUnit(2 \ell, M) \) such that
	\( h M \leq \Hyp(S^\ell) \). Now if
	\( g_1, g_2 \colon G(A) \), then
	\[
		[ g_1, g_2 ]
		=
		[ g_1, h^{- 1}\, \up{h}{g_2}\, h ]
		=
		[ g_1, h^{- 1} ]\,
		\up{ g_2 h^{- 1} }{ [ g_1, h ] }
		\colon
		\Unit( 2 \ell, M \otimes_K A ).
	\]
	Actually, this expression lies both in the small unitary group
	\( \Unit(M \otimes_K A) \) and in the unrelativized elementary subgroup
	\( \ElemUnit( 2 \ell, M \otimes_K A ) \) by theorems
	\ref{ele-sub-nor} and
	\ref{unrel}. Let
	\( H(A) \) be the intersection of
	\( \ElemUnit( 2 \ell, M \otimes_K A ) \) and the group object of diagonal matrices from
	\( \Unit( 2 \ell, M \otimes_K A ) \) (the analogue of
	\( L(A) \), but for not necessarily reductive group scheme
	\( G \)).
	
	It suffices to prove that if
	\( A = A_1 + A_2 \) is a sum of two power idempotent ideals, then
	\( H(A) = H(A_1)\, H(A_2) \). We use the obvious decomposition
	\[
		\ElemUnit( 2 \ell, M \otimes_K A )
		=
		\ElemUnit( 2 \ell, M \otimes_K A_1 )\,
		\ElemUnit( 2 \ell, M \otimes_K A_2 ),
	\]
	a proof can be found in e.g.\ %
	\cite[lemma 17]{ele-loc}. Let
	\( g \colon H(A) \) and choose
	\( g_1 \colon \ElemUnit( 2 \ell, M \otimes_K A_1 ) \),
	\( g_2 \colon \ElemUnit( 2 \ell, M \otimes_K A_2 ) \) such that
	\( g = g_1 g_2 \). By lemma
	\ref{big-cell}
	\begin{align*}
			g_1
			&=
			g_{1, {-}}\, g_{ 1, \mathrm{d} }\, g_{1, {+}},
		&
			g_2
			&=
			g_{2, {+}}\, g_{ 2, \mathrm{d} }\, g_{2, {-}},
	\end{align*}
	where
	\( g_{i, {\pm}} \) are strictly upper and lower triangular matrices, and
	\( g_{ i, \mathrm{d} } \in H(A_i) \). Comparing matrix entries of the identity
	\( g_1 = g g_2^{- 1} \) we see that
	\( g_{1, {+}} g_{2, {+}} = 1 \),
	\( g_{1, {-}}\, \up{g}{ g_{2, {-}} } = 1 \), and
	\( g = g_{ 1, \mathrm{d} } g_{ 2, \mathrm{d} } \), so
	\( g \colon H(A_1)\, H(A_2) \).
\end{proof}

\begin{theorem}
	\label{sol-sem-loc}
	Let
	\( G \) be a simple group scheme over a semilocal ring
	\( K \) of local isotropic rank at least
	\( 2 \). Suppose that the absolute root system of
	\( G \) is of classical type and
	\( G \) avoids triality if the absolute root system has the type
	\( \RootSys{D}{4} \). Then
	\( \KFunc_1^G(K) \) is solvable.
\end{theorem}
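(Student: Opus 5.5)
We will prove the theorem by splitting the cardinality filtration into its top layer and the part below it. The top layer \( G(K) / G^{ [1] }(K) \) embeds into a product of groups \( \KFunc_1^G(E) \) with \( E \) local. Over a local ring the local isotropic rank is attained by a global isotropic pinning of rank at least \( 2 \), so theorem \ref{sol-sem-iso} applies and gives a uniform bound on the solvability length (for classical types the field assumption is not needed). Hence \( G(K) / G^{ [1] }(K) \) is solvable. Theorem \ref{sol-card} already shows that \( G^{ [1] }(K) / \Elem_G(K) \) is solvable, since \( K \) has finitely many maximal ideals. Together these give solvability of \( \KFunc_1^G(K) \). Lemma \ref{sol-iso} lets us pass to an isogenous group, and by lemma \ref{twi-cla} we may then assume \( G \) is the unitary group scheme of a semi-regular quadratic form or of a classical odd form algebra, which is the setting of lemma \ref{spl-uni}.

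If the aim is a bound independent of \( |\Max(K)| \), the part below \( G^{ [1] }(K) \) needs a different argument. Write \( \Max(K) = \{ \mathfrak{m}_1, \ldots, \mathfrak{m}_N \} \). For each \( g \in G^{ [1] }(K) \), choose \( s_i \notin \mathfrak{m}_i \) such that \( g \) becomes elementary over \( K_{s_i} \). We then work with the radical algebra \( A = \Jac(K) \), or its colocalized versions, decomposed as a sum of power idempotent ideals \( A_i \). Each \( A_i \) should be built from \( s_i \) via colocalization, so that \( \Elem_G \) over \( A_i \) is controlled by the localization at \( s_i \), as in lemma \ref{loc-com}.

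The key steps for this uniform bound are as follows. First, reduce modulo the Jacobson radical: over \( K / \Jac(K) \), a product of fields, one only needs the local result, and the kernel of \( G(K) \to G( K / \Jac(K) ) \) is \( G( \Jac(K) ) \). Second, use lemma \ref{spl-uni}, which gives \( [ G(A), G(A) ] \leq \prod_i G(A_i) \). Third, show that each factor \( G(A_i) \) intersected with the relevant subgroup lies in \( \Elem_G(K) \) modulo commutators with \( g \). The model for this is the proof of lemma \ref{loc-com}, where \( [ g, G( s^\infty K ) ] \subseteq \Elem_G( s^\infty K ) \). The outcome is that \( G^{ [1] }(K) \cap G(\Jac(K))\,\Elem_G(K) \) has derived subgroup, or second commutator, contained in \( \Elem_G(K) \), with a length independent of \( N \).

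The main obstacle is arranging the decomposition \( A = \sum A_i \) into power idempotent ideals to be compatible with the localizations at the \( s_i \), while keeping \( A \) radical. A plain Jacobson radical is not power idempotent, so I would pass to the universal case over \( \Ind( \Pro(\Set) ) \) and use colocalizations \( \Jac(K)^{ (s_i^\infty) } \), whose sum covers the needed part by \cite[lemma 6]{ele-loc}. If this compatibility fails, the fallback is the non-uniform argument from the first paragraph, which already suffices for the theorem as stated.
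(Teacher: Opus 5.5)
Your first paragraph is a complete proof of the statement as literally written, but by a different route from the paper. It is the non-uniform argument the paper itself records right after introducing the cardinality filtration: theorem \ref{sol-sem-iso} over local algebras handles \( G(K) / G^{[1]}(K) \), and theorem \ref{sol-card} makes \( G^{[1]}(K) / \Elem_G(K) \) nilpotent of class less than \( |\Max(K)| \). Notice that it never uses the classical hypothesis. That shows it proves less than the theorem is for.

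As the introduction says, the content of theorem \ref{sol-sem-loc} is a derived length bounded independently of \( |\Max(K)| \). Theorem \ref{sol-main} needs this, because there \( G(K) / G^0(K) \) embeds into a product of \( \KFunc_1^G(E) \) over semilocal \( E \) with arbitrarily many maximal ideals, so your bound gives only hypoabelianity. The paper gets uniformity as follows:
\begin{enumerate}
\item It reduces to Noetherian \( K \), so \( \Jac(K) \) is finitely generated.
\item Let \( N \) be the absolute constant of theorem \ref{sol-sem-iso}. An element \( g \) of the \( N \)-th derived subgroup is elementary over \( \widehat{K}_{\Jac(K)} \cong \prod_{\mathfrak{m}} \widehat{K}_{\mathfrak{m}} \). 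Hence \( g \in \Elem_G(K)\, G(\Jac(K)^\infty K) \), where \( \Jac(K)^\infty K = \bigcap_n \Jac(K)^n K \) (in pro-sets) is radical and power idempotent.
\item For \( h \) in the same derived subgroup it picks \( s_i \notin \mathfrak{m}_i \) with \( h \) elementary over \( K_{s_i} \), and splits \( \Jac(K)^\infty K = \sum_i (s_i \Jac(K))^\infty K \).
\item It applies lemma \ref{spl-uni}, then \( [ G(s_i^\infty K), \Elem_G(K_{s_i}) ] \leq \Elem_G(K, s_i^\infty K) \) from theorems \ref{ele-sub-nor} and \ref{unrel}.
\end{enumerate}
The resulting derived length is bounded in terms of \( N \) alone.

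Your sketch of the uniform version misses exactly the completion step. Reducing modulo \( \Jac(K) \) only puts \( g \) into \( \Elem_G(K)\, G(\Jac(K)) \), and \( \Jac(K) \) is the wrong ideal. It is not power idempotent, so neither lemma \ref{spl-uni} nor theorem \ref{unrel} applies. Your colocalizations do not repair this: if \( K \) is local then \( s_1 \) is a unit and \( \Jac(K)^{ (s_1^\infty) } \cong \Jac(K) \). To get down to \( G(\Jac(K)^\infty K) \), you must apply theorem \ref{sol-sem-iso} over the complete local rings \( \widehat{K}_{\mathfrak{m}} \), not over the residue fields.
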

\begin{proof}
	We can assume that
	\( K \) is Noetherian. Indeed,
	\( K = \bigcup_{i \in I} K_i \) is a direct union of finitely generated rings. Let
	\( S_i^{- 1} K_i \) be the
	\textit{semilocalization} of
	\( K_i \) at the image of
	\( \Max(K) \) in
	\( \Spec(K_i) \), i.e.\ %
	\[
		S_i
		=
		K_i
		\setminus
		\bigcup_{ \mathfrak{m} \in \Max(K) }
			\mathfrak{m}.
	\]
	The rings
	\( S_i^{- 1} K_i \) are also semilocal with
	\( | { \Max(K_i) } | \leq | { \Max(K) } | \) and
	\( K = \bigcup_{i \in I} S_i^{- 1} K_i \) is still a direct union representation. Thus we can replace
	\( K \) by
	\( S_i^{- 1} K_i \) with sufficiently large
	\( i \).
	
	Take an element
	\( g \in G(K)^{ (N) } \) from sufficiently high term of the derived series of
	\( G(K) \). By theorem
	\ref{sol-sem-iso} the image of
	\( g \) in
	\( G( \widehat{K}_{ \Jac(K) } ) \) lies in the elementary subgroup, where
	\(
		\widehat{K}_{ \mathfrak{a} }
		=
		\varprojlim_n K / \mathfrak{a}^n
	\) is the
	\( \mathfrak{a} \)-adic completion for
	\( \mathfrak{a} \leqt K \) because
	\[
		\widehat{K}_{ \Jac(K) }
		\cong
		\prod_{ \mathfrak{m} \in \Max(K) }
			\widehat{K}_{ \mathfrak{m} }.
	\]
	
	As in the proof of lemma
	\ref{loc-com} let
	\[
		\textstyle
		\mathfrak{a}^\infty K
		=
		\bigcap_n^{ \Pro(\Set) }
			\mathfrak{a}^n K
	\]
	for any
	\( \mathfrak{a} \leqt K \), this is an ideal of
	\( K \) in the category of pro-sets. Since
	\( \mathfrak{a} \) is finitely generated,
	\( \mathfrak{a}^\infty K \) is power idempotent. The homomorphism
	\( K \to K / \Jac(K)^\infty K \) factors through
	\( \widehat{K}_{ \Jac(K) } \), so the image of
	\( g \) in
	\( K / \Jac(K)^\infty K \) lies in the elementary subgroup. Thus we only have to prove that
	\[
		\bigl[
			G\bigl( \Jac(K)^\infty K \bigr)^{ (N) },
			h
		\bigr]
		\leq
		\Elem_G(K)
	\]
	for every
	\( h \in G(K)^{ (N) } \), where
	\( N \) is sufficiently large.
	
	Let
	\(
		\Max(K)
		=
		\{ \mathfrak{m}_1, \ldots, \mathfrak{m}_n \}
	\). For every
	\( i \) the image of
	\( h \) in
	\( G( K_{ \mathfrak{m}_i } ) \) lies in the elementary subgroup again by theorem
	\ref{sol-sem-iso}, so there exists
	\( s_i \in K \setminus \mathfrak{m}_i \) such that the image of
	\( h \) in
	\( G( K_{s_i} ) \) lies in
	\( \Elem_G( K_{s_i} ) \). Clearly, the elements
	\( s_i \) generate the unit ideal, so
	\[
		\Jac(K)^\infty K
		=
		\sum_{i = 1}^n
			( s_i \Jac(K) )^\infty K.
	\]
	All ring objects in this identity are radical and power idempotent. By lemma
	\ref{spl-uni} applied to the category
	\( \Ind( \Pro(\Set) ) \) we get
	\[
		G\bigl( \Jac(K)^\infty K \bigr)^{ (1) }
		\leq
		\prod_{i = 1}^n
			G\bigl( ( s_i \Jac(K) )^\infty K \bigr).
	\]
	
	Finally,
	\[
		\bigl[
			G\bigl( ( s_i \Jac(K) )^\infty K \bigr),
			h
		\bigr]
		\subseteq
		\bigl[ G( s_i^\infty K ), \Elem_G( K_{s_i} ) \bigr]
		\subseteq
		\Elem_G( K_{s_i}, s_i^\infty K )
		=
		\Elem_G( K, s_i^\infty K )
		\subseteq
		\Elem_G(K)
	\]
	by theorems
	\ref{ele-sub-nor} and
	\ref{unrel}.
\end{proof}

\section{Main result and concluding remarks}

We say that a Tits index
\textit{appears} in an isotropic pinning if one of components of the map
\( u \) is induced by this index.

\begin{theorem}
	\label{sol-main}
	Let
	\( K \) be a commutative unital ring and
	\( G \) a reductive group scheme over
	\( K \). Suppose that the Bass--Serre dimension of
	\( K \) is finite, the local isotropic rank of
	\( G \) is at least
	\( 2 \), and
	\( K \) contains a field if the Tits index
	\( \TitsIndex{E}{}{8}{2}{78} \) appears in a maximal isotropic pinning of
	\( G_{ \mathfrak{m} } \). Then
	\( \KFunc_1^G(K) \) is hypoabelian, i.e.\ %
	\( [ \Elem_G(K), \Elem_G(K) ] \leq G(K) \) is the largest perfect subgroup. Moreover,
	\( \KFunc_1^G(K) \) is solvable if in addition
	\begin{itemize}
		
		\item
		either
		\( G \) has an isotropic pinning of rank at least
		\( 2 \) (and
		\( K \) contains a field if
		\( \TitsIndex{E}{}{8}{2}{78} \) appears in this isotropic pinning)
		
		\item
		or
		\( G \) is simple with classical absolute root system (and avoids triality if this root system is of type
		\( \RootSys{D}{4} \)).
	
	\end{itemize}
\end{theorem}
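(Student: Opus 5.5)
We assemble the main theorem from the dimension filtration of \S 6 and the semilocal results of \S\S 8--9. Put \( \delta = \delta(K) < \infty \). By the discussion after the definition of the dimension filtration, \( G^\delta(K) = \Elem_G(K) \). By theorem \ref{sol-dim} we have \( [ G^0(K), G^d(K) ] \leq G^{d + 1}(K) \), so an induction on \( d \) gives \( G^0(K)^{(d)} \leq G^d(K) \) for the derived series. In particular \( G^0(K) / \Elem_G(K) \) is solvable of length at most \( \delta \); in fact it is nilpotent. It therefore remains to control \( G(K) / G^0(K) \). This group embeds into the product of the groups \( \KFunc_1^G(E) \) over semilocal \( K \)-algebras \( E \), and the derived series is compatible with this embedding. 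Hence a uniform solvability bound for the \( \KFunc_1^G(E) \) over all such \( E \) gives solvability of \( \KFunc_1^G(K) \).

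\emph{Solvable cases.} If \( G \) has a global isotropic pinning of rank at least \( 2 \), it survives base change to every such \( E \). We reduce to simple \( G \) by the steps below and apply theorem \ref{sol-sem-iso}, which gives solvability with length bounded by an absolute constant. The field condition for \( \TitsIndex{E}{}{8}{2}{78} \) passes to \( E \). For simple \( G \) with classical absolute root system avoiding triality we use theorem \ref{sol-sem-loc} instead. Here one point must be checked: that proof produces an \( N \) independent of \( E \). The constant \( N \) comes from theorem \ref{sol-sem-iso} applied to the completion \( \widehat{E}_{\Jac(E)} \) and to the local rings \( E_{\mathfrak{m}} \), plus one extra commutator from lemma \ref{spl-uni}, so it is indeed uniform.

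\emph{Reduction to simple groups.} Using lemma \ref{sol-iso}, we pass to \( G / \Center(G) \) and then to \( [G, G] \) or its simply connected cover. The crossed module structure of lemma \ref{sol-iso} changes solvability length by at most \( 2 \): the kernel is central and the image lies in the derived subgroup. Next we split \( G \) into a product of Weil restrictions of simple groups, using theorem \ref{ele-sub-ope}(1,2,4) together with lemmas \ref{iso-pin} and \ref{loc-iso-ope}. A finite \'etale extension of a semilocal ring is again semilocal, so nothing is lost. Some care is needed for the radical torus factor of \( G \): there \( \KFunc_1 \) of the torus quotient is abelian, which again costs only a bounded amount of length.

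\emph{Hypoabelian case (the main obstacle).} For a general locally isotropic \( G \) over semilocal \( E \), theorem \ref{sol-card} together with theorem \ref{sol-sem-iso} applied to local \( E \) gives a derived length that grows with \( |\Max(E)| \), which is unbounded. We therefore only claim hypoabelianity. The plan is as follows. Let \( P \) be the largest perfect subgroup of \( G(K) \). For each semilocal \( E \) with \( |\Max(E)| = n \), the cardinality filtration shows that the image of \( P \) in \( \KFunc_1^G(E) \) lies in a solvable group; being perfect, it is trivial. Hence \( P \leq G^0(K) \). The nilpotency of \( G^0(K) / \Elem_G(K) \) then forces \( P \leq \Elem_G(K) \), and so \( P \leq [\Elem_G(K), \Elem_G(K)] \). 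Conversely, \( [\Elem_G(K), \Elem_G(K)] \) is perfect by theorem \ref{perfect}, so it equals \( P \). Hypoabelianity of \( \KFunc_1^G(K) \) follows: its perfect core is the image of \( P \), which lies in \( \Elem_G(K) \).

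The delicate points I expect are twofold. First, justifying that local isotropic rank at least \( 2 \) for \( G \) gives the same for each simple factor over each \( E \), via lemma \ref{loc-iso-ope}. Second, matching the \( \TitsIndex{E}{}{8}{2}{78} \) hypothesis stated for the maximal pinnings of \( G_{\mathfrak{m}} \) with the local rings \( E_{\mathfrak{m}'} \) that actually arise.
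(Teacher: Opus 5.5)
Your assembly matches the paper's one-line proof.
\begin{itemize}
\item Theorem \ref{sol-dim} makes \( G^0(K) / \Elem_G(K) \) nilpotent.
\item \( G(K) / G^0(K) \) embeds into the product of the \( \KFunc_1^G(E) \) over semilocal \( E \).
\item These groups are solvable of uniformly bounded length in the two special cases (theorems \ref{sol-sem-iso}, \ref{sol-sem-loc}). In general they are solvable with length depending on \( |\Max(E)| \), by theorem \ref{sol-card} on top of theorem \ref{sol-sem-iso} for local rings.
\item Theorem \ref{perfect} gives perfectness of \( [ \Elem_G(K), \Elem_G(K) ] \).
\end{itemize}
Your proof that the largest perfect subgroup \( P \leq G(K) \) equals \( [ \Elem_G(K), \Elem_G(K) ] \) is correct.

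Your final sentence, however, runs the implication backwards. The perfect core of \( G(K) / \Elem_G(K) \) need not be the image of the perfect core of \( G(K) \), because a perfect subgroup of a quotient need not lift to a perfect subgroup. For example, a free group of rank \( 2 \) has no non-trivial perfect subgroups, yet it surjects onto the alternating group of degree \( 5 \). So hypoabelianity of \( \KFunc_1^G(K) \) does not follow from \( P \leq \Elem_G(K) \). The repair uses only your own ingredients, applied in the quotient. Let \( Q \leq \KFunc_1^G(K) \) be perfect. Its image in each \( \KFunc_1^G(E) \) with \( E \) semilocal is a perfect subgroup of a solvable group, hence trivial. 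So \( Q \leq G^0(K) / \Elem_G(K) \), which is nilpotent, hence \( Q = 1 \). The description of \( [ \Elem_G(K), \Elem_G(K) ] \) as the largest perfect subgroup then follows from this and theorem \ref{perfect}, exactly as in your argument for \( P \).

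Two smaller points. First, in lemma \ref{sol-iso} the crossed pairing satisfies \( d( \langle h, h' \rangle ) = [h, h'] \). So the image of \( \KFunc_1^G(K) \) \emph{contains} the derived subgroup of \( \KFunc_1^{G / C}(K) \). With the inclusion as you state it, solvability would not pass from \( G \) to \( G / C \). You do not need that direction anyway, since theorems \ref{sol-sem-iso}, \ref{sol-card}, \ref{sol-sem-loc} apply to simple group schemes of any isogeny type. It suffices to pass to \( G / \Center(G) \), split it into Weil restrictions of simple groups, and return to \( G \) using only that the kernel is central. This also takes care of the radical torus.

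Second, your ``delicate point'' about \( \TitsIndex{E}{}{8}{2}{78} \) is harmless. Let \( E' \) be a local \( K \)-algebra and \( \mathfrak{m} \) a maximal ideal containing the contracted prime. The index of \( G_{E'} \) is obtained from that of \( G_{K_{\mathfrak{m}}} \) by circling further vertices. No \( \mathsf{E}_8 \)-index of rank at least \( 2 \) other than \( \TitsIndex{E}{}{8}{2}{78} \) itself refines to \( \TitsIndex{E}{}{8}{2}{78} \). Finally, if \( K \) contains a field, so does \( E' \).
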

\begin{proof}
	This follows from lemma
	\ref{sol-iso}, and theorems
	\ref{sol-dim},
	\ref{sol-sem-iso},
	\ref{sol-card},
	\ref{sol-sem-loc}. The claim about
	\( [ \Elem_G(K), \Elem_G(K) ] \) also uses theorem
	\ref{perfect}.
\end{proof}

The statement of theorem
\ref{sol-main} can be tweaked in two ways. First of all, there is an even weaker notion of ring dimension (and dimension of a not necessarily Noetherian topological space). Let us say that
\( \delta'(X) = - \infty \) if
\( X = \varnothing \) and
\( \delta'(X) \leq d \in \mathbb{N}_0 \) if there exists finite subset
\( M \subseteq X \) such that
\( \delta'(X) < d \) for all closed subsets
\( F \subseteq X \setminus M \). As for the Bass--Serre dimension, let
\( \delta'(K) = \delta'( \Max(K) ) \). Clearly,
\( \delta'(K) \leq \delta(K) \) and this inequality can be strict, e.g.\ for a ring (existing by
\cite[proposition 11]{spectra}) with
\( \Max(K) = \Real^2 \), where closed subsets are unions of finitely many lines through the origin and a finite point, as well as the whole
\( \Real^2 \). In this example
\( \delta'(K) = 1 \) and
\( \delta(K) = \jdim(K) = 2 \).

It is easy to see that
\( \delta'(K) \) satisfies all properties from theorem
\ref{bas-ser-dim}, moreover,
\( \delta'(K) \leq \delta'(E) \) if
\( K \subseteq E \) is an integral extension (so
\( \delta'(K) = \delta'(E) \) if
\( K \subseteq E \) is a finite extension). Lemma
\ref{com-dim} holds for
\( \delta' \) instead of
\( \delta \) with the same proof, and in the proof of theorem
\ref{sol-dim} we replace arbitrary
\( \mathfrak{m}_i \) from irreducible subsets of bounded combinatorial dimension to a finite subset of
\( \Max(K) \) from the inductive definition of
\( \delta'(K) \).

One can also note that our (and Bak's) definition of the dimension filtration
\( ( G^\delta(K) )_\delta \) is
\textit{impredicative}, i.e.\ it involves quantification on the proper class of all
\( K \)-algebras. We can replace
\( G^\delta(K) \) by the set
\( G^{\delta, \kappa}(K) \) of all
\( g \in G(K) \) such that the image of
\( g \) in
\( G(E) \) lies in
\( \Elem_G(E) \) for all
\( K \)-algebras
\( E \) with
\( |E| < \kappa \), where
\( \kappa \) is some infinite cardinal. All our proofs work if
\( \kappa > |K| \) and
\( \kappa > 2^{\aleph_0} = \mathfrak{c} \), because under the latter assumption the class of rings of cardinality less than
\( \kappa \) is closed under localization and formal completion. Of course,
\( G^{ \delta, \kappa }( {-} ) \) is a subfunctor of
\( G \) only on the category of
\( K \)-algebras of cardinality less than
\( \kappa \). A similar replacement can be done for cardinality filtration.

Finally, let us list several open problems.
\begin{enumerate}
	
	\item
	Show that
	\( \Elem_G(K) \) is finitely generated if
	\( K \) is finitely generated.
	
	\item
	Prove solvability of globally isotropic
	\( \KFunc_1 \) over semilocal rings for the remaining Tits index
	\( \TitsIndex{E}{}{8}{2}{78} \).
	
	\item
	Prove a uniform bound on solvability length of locally isotropic
	\( \KFunc_1 \) over semilocal rings.
	
	\item
	Find ``the smallest'' commutator word
	\( w \) of one variable such that
	\( w( \KFunc_1^G(K) ) = 1 \) for locally isotopic simple group scheme
	\( G \) over a semilocal ring.
	
	\item
	Investigate solvability of
	\( \KFunc_1^G(K) \) for isotropic
	\( G \) of rank
	\( 1 \) over a local ring.
	
	\item
	Give a construction of
	\( \Elem_G \) (or an analogue of
	\( [ \Elem_G, \Elem_G ] \)) for simple group schemes of local isotropic rank
	\( 1 \) over semilocal rings.
	
	\item
	Prove the
	\( \Elem \)-normal structure theorem for isotropic twisted forms of symplectic groups of isotropic rank at least
	\( 2 \).
	
	\item
	Classify
	\( [ \Elem, \Elem ] \)-normal subgroups of the Chevalley groups
	\( \ChevGrp^\simpcon( \RootSys{B}{2}, K ) \) and
	\( \ChevGrp^\simpcon( \RootSys{G}{2}, K ) \).
	
	\item
	Prove the
	\( \Elem \)-normal (or
	\( [ \Elem, \Elem ] \)-normal) structure theorem for locally isotropic reductive groups.
	
	\item
	Prove that relative locally isotropic
	\( \KFunc_1 \) is solvable under natural assumptions.
	
\end{enumerate}

\bibliographystyle{plain}
\bibliography{references}

\begin{thebibliography}{10}

\bibitem{nor-str-che}
E.~Abe.
\newblock Normal subgroups of {C}hevalley groups over commutative rings.
\newblock {\em Contemp. Math.}, 83:1--17, 1989.

\bibitem{tit-wei-acp}
S.~Alsaody, V.~Chernousov, and A.~Pianzola.
\newblock On the {T}its--{W}eiss conjecture and the {K}neser--{T}its conjecture
  for $ \mathsf{E}_{7, 1}^{78} $ and $ \mathsf{E}_{8, 2}^{78} $ (with an
  appendix by {R}.~{M}. {W}eiss).
\newblock {\em Forum Math. Sigma}, 9(e75):1--25, 2021.

\bibitem{k1-nil-lin}
A.~Bak.
\newblock Nonabelian $ \mathrm{K} $-theory: the nilpotent class of $
  \mathrm{K}_1 $ and general stability.
\newblock {\em $ \mathrm{K} $-Theory}, 4:363--397, 1991.

\bibitem{k1-nil-rel}
A.~Bak, R.~Hazrat, and N.~Vavilov.
\newblock Localization-completion strikes again: relative $ \mathrm{K}_1 $ is
  nilpotent by abelian.
\newblock {\em J. Pure Appl. Algebra}, 213:1075--1085, 2009.

\bibitem{stab-lin}
H.~Bass.
\newblock $ \mathrm{K} $-theory and stable algebra.
\newblock {\em Publ. Math. IH{\'E}S}, 22:5--60, 1964.

\bibitem{k-theory}
H.~Bass.
\newblock {\em Algebraic $ \mathrm{K} $-theory}.
\newblock W.~A.~Benjamin, 1968.

\bibitem{nor-str-sp}
D.~L. Costa and G.~E. Keller.
\newblock Radix redux: normal subgroups of symplectic groups.
\newblock {\em J. Reine Angew. Math.}, 427:51--105, 1992.

\bibitem{nor-str-g2}
D.~L. Costa and G.~E. Keller.
\newblock On the normal subgroups of $ \mathrm{G}_2({A}) $.
\newblock {\em Trans. Am. Math. Soc.}, 351(12):5051--5088, 1999.

\bibitem{nor-str-odd}
L.~Danilevich.
\newblock Normal structure of isotropic odd orthogonal groups.
\newblock Preprint, arXiv:2601.00763, 2026.

\bibitem{red-grp-sch}
M.~Demazure and A.~Grothendieck.
\newblock {\em S{\'e}minaire de g{\'e}om{\'e}trie alg{\'e}brique du {B}ois
  {M}arie {III}. {S}ch{\'e}mas en groupes {I}, {II}, {III}}.
\newblock Springer-Verlag, 1970.

\bibitem{uni-sem}
I.~Dias.
\newblock Unitary groups over strongly semilocal rings.
\newblock {\em Commun. Algebra}, 23(5):1797--1814, 1995.

\bibitem{kne-tit}
Ph. Gille.
\newblock Le probl{\`e}me de {K}neser--{T}its.
\newblock {\em Ast{\'e}risque}, 326:39--81, 2009.

\bibitem{k1-red-tri}
Ph. Gille and A.~Stavrova.
\newblock $ {R} $-equivalence on group schemes.
\newblock Preprint, arXiv:2107.01950v4, 2021.

\bibitem{k1-nil-uni}
R.~Hazrat.
\newblock Dimension theory and nonstable $ \mathrm{K}_1 $ of quadratic modules.
\newblock {\em $ \mathrm{K} $-Theory}, 27:293--328, 2002.

\bibitem{k1-nil-che}
R.~Hazrat and N.~Vavilov.
\newblock $ \mathrm{K}_1 $ of {C}hevalley groups are nilpotent.
\newblock {\em J. Pure Appl. Algebra}, 179:99--116, 2003.

\bibitem{spectra}
M.~Hochster.
\newblock Prime ideal structure in commutative rings.
\newblock {\em Trans. Am. Math. Soc.}, 142:43--60, 1969.

\bibitem{ort-sem}
M.~Knebusch.
\newblock Isometrien {\"u}ber semilokalen {R}ingen.
\newblock {\em Math. Z.}, 108:255--268, 1969.

\bibitem{ele-per-iso}
A.~Luzgarev and A.~Stavrova.
\newblock Elementary subgroup of an isotropic reductive group is perfect.
\newblock {\em St. Petersburg Math. J.}, 23(5):881--890, 2012.

\bibitem{stab-odd}
B.~A. Magurn, W.~van~der Kallen, and L.~N. Vaserstein.
\newblock Absolute stable rank and {W}itt cancellation for noncommutative
  rings.
\newblock {\em Invent. Math.}, 91:525--542, 1988.

\bibitem{noeth}
J.~Ohm and R.~L. Pendleton.
\newblock Rings with noetherian spectrum.
\newblock {\em Duke Math. J.}, 35(3):631--639, 1968.

\bibitem{ove-uni}
V.~Petrov.
\newblock Overgroups of unitary groups.
\newblock {\em K-Theory}, 29:147--174, 2003.

\bibitem{odd-uni-gro}
V.~Petrov.
\newblock Odd unitary groups.
\newblock {\em J. Math. Sci.}, 130(3):4752--4766, 2005.

\bibitem{ele-iso}
V.~Petrov and A.~Stavrova.
\newblock Elementary subgroups of isotropic reductive groups.
\newblock {\em St. Petersburg Math. J.}, 20(4):625--644, 2009.

\bibitem{tits-ind}
V.~Petrov and A.~Stavrova.
\newblock The {T}its indices over semilocal rings.
\newblock {\em Transform. Groups}, 16:193--217, 2011.

\bibitem{atlas}
E.~Plotkin, A.~Semenov, and N.~Vavilov.
\newblock Visual basic representations: an atlas.
\newblock {\em Int. J. Algebra Comput.}, 8(1):61--95, 1998.

\bibitem{nor-str-red}
A.~Stavrova and A.~Stepanov.
\newblock Normal structure of isotorpic reductive groups over rings.
\newblock {\em J. Algebra}, 656:486--515, 2024.

\bibitem{gen-num}
R.~G. Swan.
\newblock The number of generators of a module.
\newblock {\em Math. Z.}, 102:318--322, 1967.

\bibitem{tit-wei-tha}
M.~Thakur.
\newblock Albert algebras and the {T}its--{W}eiss conjecture.
\newblock {\em Trans. Am. Math. Soc.}, 375:6075--6091, 2022.

\bibitem{twi-for-cla}
E.~Voronetsky.
\newblock Twisted forms of classical groups.
\newblock {\em St. Petersburg Math. J.}, 34:179--204, 2023.

\bibitem{ele-loc}
E.~Voronetsky.
\newblock Locally isotropic elementary groups.
\newblock {\em Algebra i Analiz}, 36(2):1--26, 2024.
\newblock In Russian, English version is arXiv:2310.01592.

\bibitem{st-loc}
E.~Voronetsky.
\newblock Locally isotropic {S}teinberg groups {I}. {C}entrality of the $
  \mathrm{K}_2 $-functor.
\newblock {\em Algebra i Analiz}, 37(3):22--74, 2025.
\newblock In Russian, English version is arXiv:2410.14039.

\bibitem{st-schur}
E.~Voronetsky.
\newblock Locally isotropic {S}teinberg groups {II}. {S}chur multipliers.
\newblock {\em Algebra i Analiz}, 38(1):66--122, 2026.
\newblock In Russian, English version is arXiv:2507.04519.

\bibitem{weyl-ele}
E.~Voronetsky.
\newblock Weyl elements in isotropic reductive groups.
\newblock Preprint, arXiv:2601.14419, 2026.

\bibitem{bc-inj-sta}
Weibo Yu.
\newblock Stability for odd unitary $ \mathrm{K}_1 $ under the $ {\Lambda}
  $-stable range condition.
\newblock {\em J. Pure Appl. Algebra}, 217:886--891, 2013.

\bibitem{k1-nil-odd}
Weibo Yu and Guoping Tang.
\newblock Nilpotency of odd unitary $ \mathrm{K}_1 $-functor.
\newblock {\em Commun. Algebra}, 44(8):3422--3453, 2016.

\end{thebibliography}

\end{document}